\setlist[enumerate]{leftmargin=.5in}
\setlist[itemize]{leftmargin=.5in}
\renewcommand\footnoterule{\kern-3pt \hrule width \textwidth \kern 2.6pt}
\let\oldsection\section 
\renewcommand{\section}{\renewcommand{\theequation}{\thesection.\arabic{equation}}\oldsection}
\newtheorem{lemma}{Lemma}[section]
\newtheorem{example}[lemma]{Example}
\newtheorem{definition}[lemma]{Definition}
\newtheorem{proposition}[lemma]{Proposition}
\newtheorem{remark}[lemma]{Remark}
\newtheorem{theorem}[lemma]{Theorem}
\newtheorem{corollary}[lemma]{Corollary}
\newcommand{\italgf}{\slshape  }
\DeclareMathOperator{\trace}{\rm{Trace}}
\newcommand{\transpose}[1]{\ensuremath{{#1}^{\mathrm{T}}}}
\def\supp{\hbox{\rm{supp}}}
\def\C{\mathbb{C}}
\def\R{\mathbb{R}}
\def\N{\mathbb{N}}
\newcommand{\var}[1]{\ensuremath{x_{#1}}}
\newcommand{\RX}{\ensuremath{\R[\var{}]}}
\newcommand{\RZ}{\ensuremath{\R[z]}}
\newcommand{\CX}{\ensuremath{\C[\var{}]}}
\newcommand{\rspan}[1]{\ensuremath{\mathrm{span}_\R #1 }}
\newcommand{\lineargroup}{\ensuremath{\mathcal{G}}}
\newcommand{\symmetricgroup}[1]{\ensuremath{\mathfrak{S}_{#1}}}
\newcommand{\irreps}{\ensuremath{h}}
\newcommand{\reynolds}{\ensuremath{\mathcal{R}^\lineargroup}}
\newcommand{\irrepdim}[1]{\ensuremath{d^{(#1)}}}
\newcommand{\gmultiplicity}[2]{\ensuremath{m^{(#1)}_{#2}}}
\newcommand{\isotypicprojection}[2]{\ensuremath{\mathbf{P}^{(#1)}_{#2}}}
\newcommand{\posmat}{\ensuremath{\mathbf{P}}}
\newcommand{\quadraticmodule}[1]{\ensuremath{\mathrm{QM}(\mathbf{#1})}}
\newcommand{\gmoment}[1]{\ensuremath{y_{#1}}}
\newcommand{\gmomentsequence}{\ensuremath{\mathbf{y}}}
\newcommand{\gmomentmatrix}[2]{\ensuremath{\mathbf{M}^\lineargroup_{#2}(#1)}}
\newcommand{\momentblock}[3]{\ensuremath{\mathbf{M}^{(#2)}_{#3}(#1)}}
\newcommand{\sosmatrix}[2]{\ensuremath{\mathbf{Q}^{(#1)}_{#2}}}
\newcommand{\soscone}[2]{\ensuremath{\Sigma(\rspan{\mathcal{S}^{(#1)}_{#2}})^2}}
\newcommand{\sdpmatrix}[2]{\ensuremath{\mathbf{A}^{(#1)}_{#2}}}
\newcommand{\sdpcone}[2]{\ensuremath{\mathrm{Sym}^{(#1)}_{#2}}}
\newcommand{\pretspmatrix}[2]{\ensuremath{\tilde{\mathbf{B}}^{(#1)}_{#2}}}
\newcommand{\tspmatrix}[2]{\ensuremath{\mathbf{B}^{(#1)}_{#2}}}
\newcommand{\tspsupport}[2]{\ensuremath{\mathcal{B}^{(#1)}_{#2}}}
\definecolor{MarkerColour}{HTML}{B6073F}
\newcommand{\greencell}{\cellcolor[RGB]{107,188,91}}
\pgfplotsset{compat=1.18}
\definecolor{coral}{RGB}{255,127,80}
\title{Exploiting Term Sparsity in Symmetry-Adapted Basis\\for Polynomial Optimization}
\author{	
	Igor~Klep\thanks{University of Ljubljana, University of Primorska} , 
	Victor~Magron\thanks{LAAS-CNRS Toulouse} \thanks{Institute of Mathematics Toulouse} , 
	Tobias~Metzlaff$^\dagger$, 
	Jie~Wang\thanks{AMSS-CAS}
}
\newcommand{\Addresses}{
\bigskip
\footnotesize
I.~Klep, 
\textsc{University of Ljubljana, Faculty of Mathematics and Physics 
	\& University of Primorska, Famnit, Koper 
	\& Institute of Mathematics, Physics and Mechanics, Ljubljana, Slovenia}
\par\nopagebreak
\textit{E-mail address}: \href{mailto:igor.klep@fmf.uni-lj.si}{igor.klep@fmf.uni-lj.si}
\par\nopagebreak
\textit{Webpage}: \url{https://igorklep.codeberg.page/}

\medskip

V.~Magron, 
\textsc{LAAS CNRS \& Institute of Mathematics, Toulouse, France}
\par\nopagebreak
\textit{E-mail address}: \href{mailto:vmagron@laas.fr}{vmagron@laas.fr}
\par\nopagebreak
\textit{Webpage}: \url{https://homepages.laas.fr/vmagron/}

\medskip

T.~Metzlaff (Corresponding Author), 
\textsc{LAAS-CNRS, Toulouse, France}
\par\nopagebreak
\textit{E-mail address}: \href{mailto:math@tobiasmetzlaff.com}{math@tobiasmetzlaff.com}
\par\nopagebreak
\textit{Webpage}: \url{https://tobiasmetzlaff.com/}
\par\nopagebreak
\textit{ORCID}: \texttt{0000-0002-0688-7074}

\medskip

J.~Wang, 
\textsc{State Key Laboratory of Mathematical Sciences, Academy of Mathematics and Systems Science, Chinese Academy of Sciences, Beijing, China}
\par\nopagebreak
\textit{E-mail address}: \href{mailto:wangjie212@amss.ac.cn}{wangjie212@amss.ac.cn}
\par\nopagebreak
\textit{Webpage}: \url{https://wangjie212.github.io/jiewang/}
}
\begin{document}

\maketitle

\begin{abstract}
Polynomial optimization problems are infinite-dimensional, nonconvex, NP-hard, 
and are often handled in practice with the moment-sums of squares hierarchy of semidefinite programming bounds. 
We consider problems where the objective function and constraint polynomials are 
invariant under the action of a finite group. 
The present paper simultaneously exploits group symmetry and term sparsity 
in order to reduce the computational cost of the hierarchy. 
We first exploit symmetry by writing the semidefinite matrices in a symmetry-adapted basis according to an isotypic decomposition. 
The matrices in such a basis are block diagonal. 
Secondly, we exploit term sparsity on each block to further reduce the optimization matrix variables. 
This is a non-trivial extension of the term sparsity-based hierarchy related to sign symmetry that was introduced by two of the authors. 
Our method is compared with existing techniques via benchmarks on quartics with 
dihedral, cyclic and symmetric group symmetry. 
\end{abstract}

\thispagestyle{empty}

\tableofcontents

\section{Introduction}\label{section_1_introduction}

Computational problems can be solved more efficiently 
through the exploitation of algebraic or combinatorial structures 
that emerge from the underlying data. 
In the context of constrained polynomial optimization, 
this article focuses on moment-sums of squares-based semidefinite 
approximation methods. 
We present a moment-sums of squares hierarchy -- also called Lasserre hierarchy -- of lower bounds that simultaneously exploits 
group symmetry and term sparsity. 
We prove convergence to the optimal value under the Archimedean hypothesis (\Cref{thm_sparsity_convergence_1,thm_sparsity_convergence_2}) and 
provide benchmarks to compare with existing methods in practice (\Cref{section_4_benchmarks}). 

\textbf{Background.} 
The objective of polynomial optimization is to minimize 
a given polynomial over a real algebraic or semialgebraic set. 
The problem is nonconvex, infinite-dimensional, and NP-hard. 
To solve it in practice, Lasserre has developed in \cite{lasserre01} 
a hierarchy of semidefinite lower bounds that converges under mild assumptions 
to the optimal value. 
The technique involves, on the primal side, the reformulation 
to a moment problem, and, on the dual side, certifying positivity via sums of squares (SOS). 
The computational implementation results in a semidefinite program (SDP)
and comes from a choice of polynomial basis. 
Originally, Lasserre formulated the approach in the standard monomial basis.
Advanced techniques, such as extraction of minimizers 
via flat extension \cite{laurentmourrain09} or complex variables \cite{josz18,wang2025real}, 
are often formulated in this standard basis as well. 
More on the Lasserre hierarchy can be found in \cite{lasserre09,lasserre21}. 

Throughout, we assume that the objective function and the constraint polynomials 
are invariant under the linear action of a finite group. 
Our strategy begins with the exploitation of this invariance 
by reformulating the problem in a symmetry-adapted basis. 
We strongly emphasize that this does not, at least not in theory, 
change the numerical value of each bound in the Lasserre hierarchy. 
More precisely, the SDP matrices in the underlying 
moment-SOS relaxation are obtained by change of basis
(so that trace and positive semidefiniteness are stable) 
and feature a block-diagonal structure. 
This significantly benefits the time to solve the SDP 
at the cost of a general preprocess of computing the basis and assembling the SDP. 
This preprocess is independent of the objective function and 
can therefore be applied to a class of problems that share the same 
group symmetry, number of variables, and relaxation order. 
Gatermann had already applied symmetry reduction for dynamical systems in \cite{gatermann2000}, 
and, together with Parrilo, given a reduction scheme for SDPs with an indication of 
how to apply this on the level of polynomial optimization \cite{Gatermann04}, 
see also \cite{Scheiderer08,vallentin08}. 
This was later extended by Riener, Theobald, Andr\'en and Lasserre in \cite{riener2013exploiting} 
and to trigonometric optimization by one of the authors in \cite{Metzlaff24}. 


On top of that, we employ term sparsity exploitation. 
If the polynomial is sparse in the symmetry-adapted basis, 
the idea is to remove superfluous entries from the SDP matrices. 
The pioneering papers on sparsity exploitation in polynomial optimization \cite{Waki05,lasserre06} deal with correlative sparsity, 
which is concerned with links between the variables. 
This technique is based on reindexing the matrices in the SDP
by considering subsets of the variables to obtain a hierarchy with quasi block-diagonal SDP matrices, each block having a size related to the cardinality of these subsets, see \cite{Waki16}. 
These patterns usually arise in large scale industrial problems for 
optimal power flow \cite{JoszThesis}, 
roundoff error bound analysis \cite{MagronConstantinidesDonaldson17,Magron18},
and volume approximation \cite{TacchiWeisserLasserreHenrion21}. 

In the present paper, we focus on term sparsity, 
which is concerned with links between the monomials instead of the variables. 
These patterns arise in problems with sign symmetry \cite{lofberg09}, coordinate projection \cite{Parrilo15}, and cross-sparsity \cite{WangLiXia19}, as well as sums of nonnegative circuit polynomials (SONC) \cite{deWolff16} or signomials (SAGE) \cite{Chandrasekaran16}. 
The work \cite{magron21} on term sparsity by two of the authors generalizes these concepts to a term sparsity exploiting hierarchy, where, at each step of the Lasserre hierarchy, 
a sparsity graph is constructed. 
The nodes of the graph are the monomial basis elements and the edges encapsulate their term sparsity pattern (tsp) in the original data. 
Through consecutive support and maximal chordal extensions, 
one obtains another level of the hierarchy, 
which yields the same SDP block-structure as the one obtained after exploiting sign symmetry. 
This level is obtained after finitely many steps at a lower cost. 
Correlative and term sparsities can be successfully combined \cite{magron22}. 
We refer the interested reader to the recent book \cite{magron2023sparse} dedicated to theoretical and practical aspects of both sparsity types. 

\textbf{Contributions.} 
The novelty of the present work is to replace the monomial nodes of the tsp graph with invariant and equivariant elements of the symmetry-adapted basis. 
We show that this replacement preserves the achievement of the dense bound after finitely many steps in \Cref{thm_sparsity_convergence_1} and the asymptotic convergence in \Cref{thm_sparsity_convergence_2} when maximal chordal extensions are chosen. 
Next to these theoretical results, we provide a quantitative comparison with the dense approach, the initial TSSOS reduction, and the pure symmetry reduction. 
The algorithm is fully implemented and running in the Julia package 
\href{https://github.com/wangjie212/TSSOS}{\tt TSSOS}. 
The essence of our algorithm is to 
\begin{enumerate}
\item compute a symmetry-adapted basis; 
\item construct a tsp graph in the symmetry-adapted basis with respect to the entire symmetry group; 
\item perform chordal + support extensions. 
\end{enumerate}
The approach works for any finite group and comes with convergence guarantees. 
Note that step 1. is independent of the objective function 
but depends only on the group action. 

For the case of the symmetric group $\mathfrak{S}_n$, 
we can compare with \cite{JohannesThesis}. 
The cited work offers a complementary approach where the strategy is to
\begin{enumerate}
\item construct a tsp graph with monomial orbits and edge orbits;
\item perform a symmetric chordal extension; 
\item apply symmetry reduction to the so-obtained blocks of the tsp graph with respect to stabilizer subgroups. 
\end{enumerate}
Here, step 2. relies on making an orbit graph simplicial, 
which is, according to the author of \cite{JohannesThesis}, an expensive operation. 
In our case, the most expensive operation is the computation of products of basis elements 
and the evaluation under the Reynolds operator. 
Furthermore, \cite{JohannesThesis} does not guarantee convergence. 

\textbf{Structure of the article.} 
We first review the symmetry reduction scheme in \Cref{section_2_symmetry_reduction} 
and highlight the results on symmetric sums of squares from the literature. 
\Cref{section_3_symmetry_and_sparsity} then introduces 
the tsp graph for the symmetry-adapted basis and 
contains our main theoretical results, accompanied by small examples. 

We conclude our work with several instances of benchmarks in \Cref{section_4_benchmarks}. 
The first scenario is for minimizing the one-dimensional ring Ising quartic 
with dihedral symmetry over a Laplacian ball. 
In this case, all bounds are sharp at the lowest possible relaxation and sparsity order. 
We use maximal chordal extensions to indicate the decrease in block sizes and numbers of constraints. 
The second scenario is the minimization of a two-dimensional torus grid quartic 
over a finite set defined by a single equality constraint. 
Here, we require several levels of sparsity to achieve the optimal bound 
but still beat previous block sizes. 

In \Cref{section_correlative_symmetry}, 
we provide concluding remarks and further investigation tracks. 
In particular, we address the possibility of combining correlative sparsity with orbit space reduction as well as extending our framework to the analysis of equivariant dynamical systems. 


\section{Preliminary Background on Symmetric Sums of Squares}\label{section_2_symmetry_reduction}

For given polynomials 
$f,g_1,\ldots,g_\ell\in \R[\var{1},\ldots,\var{n}] =: \RX$, 
we consider the \textbf{polynomial optimization problem}
\begin{equation}\label{eq_pop}\tag{POP}
    f^*
\coloneqq  \min\{ f(X)\,\vert\,X\in K \}
    \quad \mbox{with} \quad
    K
\coloneqq  \{X\in\R^n\,\vert\, 
    \forall \, 1\leq k\leq \ell: 
    g_k(X)\geq 0\},
\end{equation}
where $K$ is assumed to be compact. 

\subsection{Moment Relaxation and SOS Reinforcement}

A \textbf{sum of squares} (SOS) is a polynomial $q\in\RX$ 
of the form $q=\sum_t q_t^2$ with 
$q_t \in \RX$ and $t$ ranging over a finite index set. 
The set of sums of squares of elements 
in $\RX$ is denoted by $\Sigma\,\RX^2$. 
The \textbf{quadratic module} generated by the 
constraint polynomials $g_1,\ldots,g_\ell$ is 
\begin{equation}
    \quadraticmodule{g} 
\coloneqq  \left\{ 
    q_0 + \sum\limits_{k=1}^\ell q_k\,g_k 
    \,\vert\, 
    q_0,q_1,\ldots,q_\ell \in \Sigma\,\RX^2 
    \right\} . 
\end{equation}
Furthermore, $\quadraticmodule{g}$ is called \textbf{Archimedean} 
if it contains an element $p\in\quadraticmodule{g}$ 
such that the semialgebraic superset 
$\{X\in\R^n\,\vert\,p(X)\geq 0\}$ 
of $K$ is compact. 

A linear form $L: \RX \to \R$ is said to have a 
representing probability measure $\mu$ on $K$ if
$L(p) = \int_K p(X) \, \mathrm{d} \mu( X )$ 
for all $p\in\RX$. 
For the standard monomial basis $\{\var{}^\alpha\,\vert\,\alpha\in\N^n\}$, 
the quantities $y_\alpha \coloneqq L(\var{}^\alpha) = 
\int_K X^\alpha \, \mathrm{d} \mu( X )$ 
are called \textbf{moments}. 
For more information on the theory of positive polynomials, moments, and sums of squares, 
the reader is referred to \cite{marshall08,lasserre21}. 


\begin{theorem}[Putinar's Positivstellensatz]\label{thm_putinar}\cite{putinar93}
Assume that $\quadraticmodule{g}$ is Archimedean. 
\begin{enumerate}
\item A linear form $L:\RX\to \R$ which is nonnegative on 
$\quadraticmodule{g}$ has a representing probability measure $\mu$ on $K$. 
\item A polynomial $p\in\RX$ which is strictly positive on $K$ 
is contained in $\quadraticmodule{g}$. 
\end{enumerate}
\end{theorem}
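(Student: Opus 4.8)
The plan is to prove part (1) directly via a Gelfand--Naimark--Segal-type construction together with the spectral theorem, and then to deduce part (2) from part (1) by a Hahn--Banach separation argument.

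For part (1), I would first normalize so that $L(1)=1$. Since $L$ is nonnegative on squares, the bilinear form $\langle p,q\rangle \coloneqq L(pq)$ is positive semidefinite on $\RX$, so I would quotient out the kernel $\{p : L(p^2)=0\}$ and complete to obtain a Hilbert space $H$ carrying a dense image of $\RX$. The crucial structural fact is that $\quadraticmodule{g}$ is closed under multiplication by squares, so each multiplication operator $M_i : p \mapsto \var{i}\,p$ is symmetric on the dense domain. I would then invoke the Archimedean hypothesis: there is $N$ with $N-\sum_i \var{i}^2 \in \quadraticmodule{g}$, whence $L\big((N-\sum_i\var{i}^2)p^2\big)\ge 0$, that is $\sum_i\|M_i p\|^2 \le N\|p\|^2$. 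This bounds each $M_i$, so the $M_i$ extend to bounded, pairwise commuting, self-adjoint operators on $H$.

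Next I would apply the spectral theorem for a finite family of commuting bounded self-adjoint operators to obtain a projection-valued measure $E$ on $\R^n$, supported in the ball of radius $\sqrt{N}$, with $M_i=\int \var{i}\,\mathrm dE$. Setting $\mu(B)\coloneqq\langle E(B)\mathbf 1,\mathbf 1\rangle$ for the class $\mathbf 1$ of the constant polynomial produces a probability measure satisfying $L(p)=\int_{\R^n} p\,\mathrm d\mu$ for all $p\in\RX$. To localize the support in $K$, I would use that $g_k p^2\in\quadraticmodule{g}$, so that $\int g_k\,p^2\,\mathrm d\mu=L(g_kp^2)\ge 0$ for every $p$; by density this forces $g_k\ge 0$ $\mu$-almost everywhere, hence $\mathrm{supp}(\mu)\subseteq K$, completing part (1). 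For part (2), suppose $p$ is strictly positive on $K$ but $p\notin\quadraticmodule{g}$. Since $\quadraticmodule{g}$ is a convex cone not containing $p$, I would separate it from $p$ by a Hahn--Banach argument in the finest locally convex topology on $\RX$; the Archimedean hypothesis guarantees that $1$ is an order unit, which permits normalizing the separating form to $L(1)=1$ with $L\ge 0$ on $\quadraticmodule{g}$ and $L(p)\le 0$. Applying part (1) then yields a probability measure $\mu$ on $K$ with $L(p)=\int_K p\,\mathrm d\mu>0$, a contradiction.

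The main obstacle is the analytic heart of part (1): verifying that the symmetric multiplication operators are genuinely self-adjoint and commuting so that the joint spectral theorem applies, and then checking that the resulting scalar measure both represents $L$ on all of $\RX$ and is supported in $K$. The separation step in part (2) is delicate for a related reason, namely that $\quadraticmodule{g}$ need not be closed; it is precisely the Archimedean hypothesis, giving $1$ as an order unit, that makes both the separation and the subsequent representation argument go through.
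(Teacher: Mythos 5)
The paper does not prove \Cref{thm_putinar}: it is stated as background and attributed to \cite{putinar93}, so there is no internal proof to compare against. Your proposal reconstructs the standard operator-theoretic proof (essentially Putinar's original argument): GNS construction from $L$, the Archimedean bound turning the commuting symmetric multiplication operators into bounded self-adjoint ones, the joint spectral measure yielding a representing probability measure, support localization via $L(g_k\,p^2)\geq 0$ together with density of polynomials for the compactly supported measure, and, for part (2), Eidelheit/Hahn--Banach separation using $1$ as an order unit followed by an application of part (1). This is sound in outline. Two minor presentational points: symmetry of $M_i$ follows from commutativity of polynomial multiplication alone, whereas closure of $\quadraticmodule{g}$ under multiplication by squares is what you actually need for the Archimedean bound and for the support localization; and the bound $\sum_i\|M_ip\|^2\leq N\|p\|^2$ is needed already to see that $M_i$ is well defined on the quotient by $\{p : L(p^2)=0\}$, so it should be established before passing to that quotient.

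The one substantive gap concerns the form of the Archimedean hypothesis you invoke. The paper defines $\quadraticmodule{g}$ to be Archimedean if it contains some $p$ such that $\{X\in\R^n : p(X)\geq 0\}$ is compact, while both of your key steps --- the operator bound in part (1) and the order-unit property of $1$ in part (2) --- use the algebraic form: there exists $N$ with $N-\var{1}^2-\cdots-\var{n}^2\in\quadraticmodule{g}$. These formulations are equivalent, but the direction you need (compactness form implies algebraic form) is itself a nontrivial theorem and must be cited or proved: apply Schm\"udgen's Positivstellensatz (in the form of W\"ormann's theorem) to the preordering generated by the single polynomial $p$, whose nonnegativity set is compact, to obtain $N-\sum_i\var{i}^2=\sigma_0+\sigma_1 p$ with $\sigma_0,\sigma_1$ sums of squares; since $p\in\quadraticmodule{g}$ and quadratic modules are closed under addition and under multiplication by sums of squares, this element lies in $\quadraticmodule{g}$. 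Without this bridge (or an equivalent justification), your argument proves the theorem under a hypothesis that is formally different from the one stated in the paper.
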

This motivates the definition of the lower bounds 
\begin{equation}
    f^*
\geq   f_{\mathrm{mom}} 
\coloneqq  \begin{array}[t]{rl} 
    \inf        &   L(f) \\ 
    \mbox{s.t.} &   L\in\RX^*,\,L(1)=1,\\ 
                &   \forall \,p\in\quadraticmodule{g}: L(p)\geq 0 
    \end{array} 
\geq   f_{\mathrm{sos}} 
\coloneqq  \begin{array}[t]{rl} 
    \sup        &   \lambda \\ 
    \mbox{s.t.} &   \lambda\in\R, \\ 
                &   f - \lambda \in 
                    \quadraticmodule{g} 
    \end{array}
\end{equation}
and, if $\quadraticmodule{g}$ is Archimedean, then \Cref{thm_putinar} implies $f_{\mathrm{mom}} = f_{\mathrm{sos}} = f^*$. 
Note that $\quadraticmodule{g}$ being Archimedean implies that 
the feasible region $K$ is compact. 
Conversely, if $K$ is compact, one may add 
$g_{\ell+1}=R^2-\var{1}^2-\ldots-\var{n}^2$ for a radius $R\in\R$ 
sufficiently large to the list of constraints 
in order to enforce the Archimedean property 
without changing the resulting semialgebraic set $K$. 

\subsection{Linear Representation Theory}

Throughout, let $\lineargroup \subseteq \mathrm{GL}_n(\R)$ be a finite matrix group. 
The following results on the representation theory of finite groups can be found in \cite{serre77,fulton13}. 
For further computational interests, we refer to \cite{FasslerStiefel92,Sturmfels08}. 
Although there are no novel contributions in this section, 
we still prove some of the presented statements in \Cref{appendix_rep_theo} using our own notation. 



The group $\lineargroup$ has an induced action on polynomials $p\in\RX$ by 
$p^\sigma(X)\coloneqq p(\sigma^{-1}\cdot X)$, 
where $\cdot$ denotes the matrix-vector multiplication.

\begin{definition}
A polynomial $p\in\RX$ is called \textbf{$\lineargroup$-invariant} 
if $p^\sigma=p$ for all $\sigma\in\lineargroup$. 
The set of all invariant polynomials is a 
$\R$-algebra denoted $\RX^\lineargroup$ and
the \textbf{Reynolds operator} is
\[
    \reynolds:
    \RX\to\RX^\lineargroup,\,
    p\mapsto \frac{1}{\vert\lineargroup\vert}
    \sum\limits_{\sigma\in\lineargroup} p^\sigma.
\]
\end{definition}

\begin{definition}\label{defi_rep_theo}
Let $\mathbb{K}$ be an arbitrary field. 
A \textbf{$\lineargroup$-module} $W$ is a $\mathbb{K}$-vector space together 
with a group homomorphism $\rho_W: \lineargroup\to\mathrm{GL}(W)$, 
called \textbf{representation}. 
A $\lineargroup$-module $W$ is called \textbf{irreducible} 
if its only $\lineargroup$-submodules are $0$ and $W$ itself. 
A \textbf{$\lineargroup$-module homomorphism} between two 
$\lineargroup$-modules $W,\tilde{W}$ is a $\mathbb{K}$-vector space homomorphism 
$\phi : W \to \tilde{W}$ with $\phi \circ \rho_W (\sigma) = \rho_{\tilde{W}} (\sigma) \circ \phi$ 
for all $\sigma \in \lineargroup$. 
The $\mathbb{K}$-vector space of $\lineargroup$-module homomorphisms 
is denoted by $\mathrm{Hom}_\lineargroup(W,\tilde{W})$. 
Two $\lineargroup$-modules are called \textbf{isomorphic} 
if there exists a bijective $\lineargroup$-module homomorphism between them. 
\end{definition}

Even though our original polynomials $f,g_1,\ldots,g_\ell$ for \ref{eq_pop} and 
the group $\lineargroup$ itself are defined over the real numbers $\R$, 
all representations will formally be defined 
over the complex numbers $\mathbb{K}=\C$, 
because we rely on Maschke's Theorem (\Cref{appendix_rep_theo}), 
which requires algebraically closed fields. 

The induced action extends naturally to $\CX$, 
making it a $\lineargroup$-module. 
Since the polynomial degree is preserved, 
homogeneous polynomials remain homogeneous under the action. 
By Maschke's Theorem, 
the polynomial ring over $\C$ has an \textbf{isotypic decomposition} 
\begin{equation}\label{eq_isotypic_decomposition}
    \RX \hookrightarrow \CX
=   \bigoplus\limits_{i=1}^{\irreps}
    W^{(i)}
    \quad \mbox{with} \quad
    W^{(i)}
=   \bigoplus\limits_{j\in J^{(i)}}
    W^{(i)}_j,
\end{equation}
where, for fixed $i$, the $W^{(i)}_j$ are pairwise isomorphic complex irreducible 
$\lineargroup$-modules of $\C$-dimension $\irrepdim{i} \in \mathbb{N}$ 
indexed by a set $J^{(i)}$, see \cite[Ch.2,~Thm.4]{serre77}. 
Moreover, $\irreps$ is the number of non-isomorphic complex irreducible representations of $\lineargroup$ and equals the number of conjugacy classes of $\lineargroup$. 


A vector space basis that admits the above 
decomposition can be constructed as follows, 
although this is by no means canonical. 
Only the parameters $\irreps$ and $\irrepdim{i}$ 
are intrinsic to the group $\lineargroup$. 

\begin{proposition}[Symmetry-Adapted Basis]\label{prop_symmetry_adapted_basis}\cite[Ch.~2,~Prop.~8]{serre77}
Let $\{\vartheta^{(i)}\,\vert\,1\leq i\leq \irreps\}$ 
with $\vartheta^{(i)}:\lineargroup\to \mathrm{GL}_{\irrepdim{i}}(\C)$
be a family of pairwise nonisomorphic 
irreducible matrix representations of $\lineargroup$ 
and fix some $1\leq i\leq \irreps$. 
For $1 \leq u \leq \irrepdim{i}$, define the operator 
\[
    \isotypicprojection{i}{u}: 
    \CX\to \CX, 
    p \mapsto 
    \frac{\irrepdim{i}}{\vert\lineargroup\vert} 
    \sum\limits_{\sigma\in\lineargroup} 
    \left(\vartheta^{(i)}(\sigma^{-1})\right)_{1,u}\,p^\sigma . 
\]
Let $\left\{w^{(i)}_{1,j}\,\vert\,j\in J^{(i)}\right\}$ 
be a homogeneous basis for the image of 
$\isotypicprojection{i}{1}$. 
Then the set 
\[
    \left\{  w^{(i)}_{1,j},\,
        w^{(i)}_{2,j}\coloneqq\isotypicprojection{i}{2}(w^{(i)}_{1,j}),\,
        w^{(i)}_{3,j}\coloneqq\isotypicprojection{i}{3}(w^{(i)}_{1,j}),\,
        \ldots,\,
        w^{(i)}_{\irrepdim{i},j}\coloneqq\isotypicprojection{i}{\irrepdim{i}}(w^{(i)}_{1,j}) \right\}
\]
is a homogeneous vector space basis for the 
irreducible $\lineargroup$-module $W^{(i)}_j$. 
\end{proposition}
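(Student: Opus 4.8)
The plan is to embed the operators $\isotypicprojection{i}{u}$ into a larger family of matrix-coefficient averaging operators that obey matrix-unit composition rules. For $1 \leq a,b \leq \irrepdim{i}$ I would introduce
\[
    \isotypicprojection{i}{a,b}:\CX\to\CX,\quad
    p\mapsto \frac{\irrepdim{i}}{\vert\lineargroup\vert}
    \sum_{\sigma\in\lineargroup}(\vartheta^{(i)}(\sigma^{-1}))_{a,b}\,p^\sigma,
\]
so that the operator in the statement is the special case $\isotypicprojection{i}{u}=\isotypicprojection{i}{1,u}$. The whole proposition then follows from three facts about this family: a composition law, an equivariance identity, and a coefficient-extraction identity. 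Throughout I use that the $\lineargroup$-action on $\CX$ is a left action, $(p^\sigma)^\tau=p^{\tau\sigma}$, and that it preserves homogeneous degree.

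The key computation, and the main obstacle, is the composition law
\[
    \isotypicprojection{i}{a,b}\circ\isotypicprojection{i'}{c,d}
    = \delta_{i,i'}\,\delta_{a,d}\,\isotypicprojection{i}{c,b}.
\]
To obtain it I would expand the double average, use $(p^\tau)^\sigma=p^{\sigma\tau}$ and substitute $\rho=\sigma\tau$, expand $\vartheta^{(i')}(\rho^{-1}\sigma)$ via the homomorphism property, and apply the grand Schur orthogonality relation $\sum_{\sigma}(\vartheta^{(i)}(\sigma^{-1}))_{a,b}(\vartheta^{(i')}(\sigma))_{c,d}=\frac{\vert\lineargroup\vert}{\irrepdim{i}}\delta_{i,i'}\delta_{b,c}\delta_{a,d}$, which holds for any choice of irreducible matrix representations over $\C$. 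The only delicate point is the index bookkeeping induced by the inverses and the left action; once it is carried out cleanly the law drops out. In particular $\isotypicprojection{i}{1}=\isotypicprojection{i}{1,1}$ is idempotent, so it is a genuine projection and $\isotypicprojection{i}{1}(w^{(i)}_{1,j})=w^{(i)}_{1,j}$.

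Next I would establish the module structure. The same reindexing applied to $(\isotypicprojection{i}{u}(p))^\tau$ yields the equivariance identity
\[
    (\isotypicprojection{i}{u}(p))^\tau
    = \sum_{w=1}^{\irrepdim{i}}(\vartheta^{(i)}(\tau))_{w,u}\,\isotypicprojection{i}{w}(p),
\]
so taking $p=w^{(i)}_{1,j}$ shows that $\mathrm{span}\{w^{(i)}_{u,j}\mid 1\leq u\leq\irrepdim{i}\}$ is $\lineargroup$-stable and that $\lineargroup$ acts on it through $\vartheta^{(i)}$; hence, once the spanning vectors are shown to be independent, the space is an irreducible $\lineargroup$-module isomorphic to $\vartheta^{(i)}$. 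Homogeneity is immediate: each $\isotypicprojection{i}{u}$ preserves degree, so every $w^{(i)}_{u,j}=\isotypicprojection{i}{u}(w^{(i)}_{1,j})$ is homogeneous of the same degree as $w^{(i)}_{1,j}$.

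Finally I would settle independence and the matching with the decomposition. Specializing the composition law gives the extraction identity $\isotypicprojection{i}{a,1}(w^{(i)}_{u,j})=\delta_{a,u}\,w^{(i)}_{1,j}$: applying $\isotypicprojection{i}{a,1}$ to a vanishing combination $\sum_{u,j}c_{u,j}w^{(i)}_{u,j}=0$ isolates $\sum_j c_{a,j}w^{(i)}_{1,j}=0$, and since $\{w^{(i)}_{1,j}\mid j\in J^{(i)}\}$ is a basis of $\mathrm{im}(\isotypicprojection{i}{1})$ all coefficients vanish; in particular the $\irrepdim{i}$ vectors for a fixed $j$ are independent, so $\mathrm{span}\{w^{(i)}_{u,j}\}_u$ has dimension $\irrepdim{i}$ and is irreducible. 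The character projector $\posmat^{(i)}:=\sum_{u=1}^{\irrepdim{i}}\isotypicprojection{i}{u,u}=\frac{\irrepdim{i}}{\vert\lineargroup\vert}\sum_\sigma\mathrm{tr}(\vartheta^{(i)}(\sigma^{-1}))\,p^\sigma$ is the standard projection onto $W^{(i)}$, and the composition law gives $\posmat^{(i)}(w^{(i)}_{u,j})=w^{(i)}_{u,j}$ together with $\isotypicprojection{i}{1}|_{W^{(i')}}=0$ for $i'\neq i$ and $\mathrm{rank}(\isotypicprojection{i}{1})$ equal to the multiplicity. Thus all $w^{(i)}_{u,j}$ lie in $W^{(i)}$, their total number $\irrepdim{i}\,\vert J^{(i)}\vert$ equals $\dim W^{(i)}$, and, being independent, they form a basis realizing $W^{(i)}=\bigoplus_{j\in J^{(i)}}W^{(i)}_j$ with each $W^{(i)}_j=\mathrm{span}\{w^{(i)}_{u,j}\}_u$ as claimed.
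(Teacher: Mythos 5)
Your proof is correct: the composition law, the equivariance identity, and the extraction/counting arguments all check out, and together they establish homogeneity, irreducibility, and the basis property as claimed. The paper itself gives no proof of this proposition---it defers to Serre [Ch.~2, Prop.~8]---and your argument via the matrix-coefficient operators $\isotypicprojection{i}{a,b}$ and Schur orthogonality is essentially a faithful reconstruction of that standard proof, so there is nothing to bridge between your approach and the paper's.
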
 

The submodules $W^{(i)}_j$ are complex in general, 
but can be decomposed into a real and imaginary part, 
see \cite[Eq.~2.9]{riener2013exploiting}. 
In particular, one can always choose real basis elements $w^{(i)}_{1,j} \in \RX$. 

\begin{definition}
A linear form $L\in\RX^*$ is called \textbf{$\lineargroup$-invariant} 
if $L(p^\sigma)=L(p)$ for all $p\in\RX,\,\sigma\in\lineargroup$. 
A bilinear form $\mathcal{L}:\RX\times \RX\to \R$ is called \textbf{$\lineargroup$-equivariant} if $\mathcal{L}(p^\sigma,q^\sigma)=\mathcal{L}(p,q)$ for all $p,q\in\RX,\,\sigma\in\lineargroup$. 
\end{definition}

Note that a $\lineargroup$-invariant linear form is 
uniquely determined by its values on $\RX^\lineargroup$. 
The vector space of $\lineargroup$-invariant linear forms 
is therefore isomorphic to $(\RX^\lineargroup)^*$. 
Using Schur's Lemma (\Cref{appendix_rep_theo}), one can show the following. 


\begin{lemma}
\label{lemma_bilinear_form_on_symm_basis}
Let $\mathcal{L}:\RX\times \RX\to \R$ be a 
$\lineargroup$-equivariant bilinear form as well as 
$1\leq i,i'\leq \irreps$ and $j,j'\in J^{(i)}$. 
There exist scalars $c^{(i)}_{j,j'}\in\R$ such that, 
for all $1\leq u,u' \leq \irrepdim{i}$, we have
\[
    \mathcal{L}(w^{(i)}_{u,j} , w^{(i')}_{u',j'})
=   \begin{cases}
    0,              & \mbox{ if } i\neq i'\\
    0,              & \mbox{ if } u\neq u'\\
    c^{(i)}_{j,j'}, & \mbox{ otherwise}\\
    \end{cases} , 
\]
where $w^{(i)}_{u,j}$ are defined as in \Cref{prop_symmetry_adapted_basis}. 
\end{lemma}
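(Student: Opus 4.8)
The plan is to use Schur's Lemma, which is the natural tool here since the basis elements transform according to irreducible representations. The key observation is that a $\lineargroup$-equivariant bilinear form $\mathcal{L}$ on $\RX$ induces, via the isotypic decomposition \eqref{eq_isotypic_decomposition}, a collection of $\lineargroup$-module homomorphisms between the irreducible submodules $W^{(i)}_j$. First I would fix the bilinear form $\mathcal{L}$ and observe that equivariance, $\mathcal{L}(p^\sigma, q^\sigma) = \mathcal{L}(p,q)$, says precisely that the linear map $q \mapsto \mathcal{L}(\cdot\,, q)$ intertwines the $\lineargroup$-action with the dual action. Since the basis of \Cref{prop_symmetry_adapted_basis} is constructed so that the $w^{(i)}_{u,j}$ transform according to the matrix representation $\vartheta^{(i)}$, this will let me encode the values $\mathcal{L}(w^{(i)}_{u,j}, w^{(i')}_{u',j'})$ as entries of intertwining operators.

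The second step is to invoke the two consequences of Schur's Lemma (available from \Cref{appendix_rep_theo}). First, $\mathrm{Hom}_\lineargroup(W, \tilde W) = 0$ when $W, \tilde W$ are nonisomorphic irreducibles; since $W^{(i)}_j$ and $W^{(i')}_{j'}$ are nonisomorphic precisely when $i \neq i'$, this forces $\mathcal{L}(w^{(i)}_{u,j}, w^{(i')}_{u',j'}) = 0$ for $i \neq i'$, giving the first case. Second, for the remaining case $i = i'$, Schur's Lemma over $\C$ says every homomorphism between isomorphic irreducibles is a scalar multiple of a fixed isomorphism. The cleanest route is to average: define $c^{(i)}_{j,j'}$ through the equivariant pairing restricted to the isotypic block and show that the matrix $\left(\mathcal{L}(w^{(i)}_{u,j}, w^{(i)}_{u',j'})\right)_{u,u'}$ is forced to be a scalar multiple of the identity in the indices $u, u'$. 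Concretely, I expect to apply the $\lineargroup$-averaging / orthogonality relations to the functions $\sigma \mapsto \bigl(\vartheta^{(i)}(\sigma)\bigr)_{u,\cdot}$, exploiting that $w^{(i)}_{u,j} = \isotypicprojection{i}{u}(w^{(i)}_{1,j})$ ties each $u$-component to a fixed row index of $\vartheta^{(i)}$.

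The main obstacle, and the part requiring care, is the off-diagonal vanishing within a fixed isotype, namely $\mathcal{L}(w^{(i)}_{u,j}, w^{(i)}_{u',j'}) = 0$ when $u \neq u'$, together with the $u$-independence of the nonzero value. This is exactly where the structure of the symmetry-adapted basis enters: the indices $u$ label the $\irrepdim{i}$ copies spanning one irreducible, and Schur forces the intertwiner to act as a scalar across these copies but be diagonal across the $u$-grading. I anticipate the computation will come down to the Schur orthogonality relations for matrix coefficients,
\[
    \frac{1}{\vert\lineargroup\vert}\sum\limits_{\sigma\in\lineargroup}
    \overline{\bigl(\vartheta^{(i)}(\sigma)\bigr)_{u,a}}\,
    \bigl(\vartheta^{(i)}(\sigma)\bigr)_{u',b}
=   \frac{1}{\irrepdim{i}}\,\delta_{u,u'}\,\delta_{a,b},
\]
whose $\delta_{u,u'}$ factor produces the diagonality in $u$ and whose normalization produces a value independent of $u$, leaving only the dependence on $j, j'$ recorded in $c^{(i)}_{j,j'}$.

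Finally I would assemble the three cases and verify that the scalars $c^{(i)}_{j,j'}$ can be taken real: since the basis elements $w^{(i)}_{1,j}$ may be chosen in $\RX$ and $\mathcal{L}$ takes real values on real arguments, the resulting constants lie in $\R$, completing the claim. The only genuine subtlety is bookkeeping between the row-index convention of $\isotypicprojection{i}{u}$ (which uses the first row of $\vartheta^{(i)}$) and the orthogonality relations; I expect this to be routine once the intertwining property is set up correctly.
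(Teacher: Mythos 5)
Your overall route is the one the paper intends: the paper offers no written proof of \Cref{lemma_bilinear_form_on_symm_basis} beyond the pointer to Schur's Lemma in \Cref{appendix_rep_theo}, and your plan (read equivariance as an intertwining property, let Schur's Lemma kill the blocks between non-isomorphic irreducibles and force one-dimensionality within an isotype, then use orthogonality of matrix coefficients for the $\delta_{u,u'}$ structure and the $u$-independence) is exactly that standard argument. The gap is in the orthogonality step, and it is not the bookkeeping issue you anticipated.

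The relation you quote carries a complex conjugate, i.e.\ it is the orthogonality relation for a \emph{sesquilinear} pairing of unitary matrix coefficients. But $\mathcal{L}$ is bilinear: expanding $\mathcal{L}\bigl(\isotypicprojection{i}{u}(w^{(i)}_{1,j}),\isotypicprojection{i'}{u'}(w^{(i')}_{1,j'})\bigr)$ and using $\mathcal{L}(p^\sigma,q^\tau)=\mathcal{L}(p^{\tau^{-1}\sigma},q)$, the group sums that actually appear are of the form
\[
    \frac{1}{\vert\lineargroup\vert}\sum\limits_{\pi\in\lineargroup}
    \bigl(\vartheta^{(i)}(\pi)\bigr)_{a,u}\,
    \bigl(\vartheta^{(i')}(\pi)\bigr)_{1,u'},
\]
with no conjugation anywhere. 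Such an unconjugated sum is an intertwining pairing between $\vartheta^{(i')}$ and the contragredient representation $\pi\mapsto\transpose{\vartheta^{(i)}(\pi^{-1})}$; by Schur it vanishes unless $\vartheta^{(i')}$ is isomorphic to the \emph{dual} of $\vartheta^{(i)}$, not unless $i'=i$. For self-dual irreducibles, in particular those realizable by real orthogonal matrices, the two conditions coincide, conjugation is vacuous (then $\overline{\vartheta^{(i)}(\pi)_{u,a}}=\vartheta^{(i)}(\pi)_{u,a}=\vartheta^{(i)}(\pi^{-1})_{a,u}$), and your computation is correct. But for complex-type irreducibles they differ, and there even the bare statement requires care: for $\lineargroup=\mathfrak{C}_3$ acting on $\R^2$ by rotation, $z=x_1+\mathrm{i}\,x_2$ and $\bar z=x_1-\mathrm{i}\,x_2$ lie in non-isomorphic isotypic components, yet $\mathcal{L}(z,\bar z)=L(\reynolds(z\bar z))=L(x_1^2+x_2^2)$ is generically nonzero. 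So to close the gap you must either assume (or arrange) that every $\vartheta^{(i)}$ is real orthogonal, so that your conjugated relation becomes the correct bilinear one, or work with the real form of the isotypic decomposition from \cite[Eq.~2.9]{riener2013exploiting}, which the paper invokes right after \Cref{prop_symmetry_adapted_basis} and under which conjugate pairs of complex irreducibles are merged. Your final remark on the realness of the constants $c^{(i)}_{j,j'}$ is fine once this convention is fixed.
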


In particular, a $\lineargroup$-equivariant bilinear form 
is determined by its values on the blocks 
$W^{(i)}_1 \times W^{(i)}_1$ with $1\leq i\leq \irreps$. 
For $r\in \N$, denote by $\RX_r$ the set of polynomials of degree at most $r$. 

\begin{corollary}[Block Diagonalization]
\label{lemma_blockdiagonal}
Let $r\in\mathbb{N}$, $1\leq i\leq \irreps$ and 
denote by $1,2,3,\ldots,\gmultiplicity{i}{r}$ 
the indices $j \in J^{(i)}$ with 
$\deg(w^{(i)}_{1,j})\leq r$. 
In the ordered basis 
\[
    \bigcup\limits_{i=1}^{\irreps}
    \bigcup\limits_{j=1}^{\gmultiplicity{i}{r}}
    \{w^{(i)}_{u,j}\,\vert\,1\leq u\leq \irrepdim{i}\} ,
\]
the matrix of a group element $\sigma\in\lineargroup$ acting on $\RX_{r}$ has $\irreps$ blocks, 
each consisting of $\gmultiplicity{i}{r}$ further equal $\irrepdim{i}\times \irrepdim{i}$ blocks. 
In particular, we obtain a block diagonal matrix representation 
$\vartheta\coloneqq\gmultiplicity{1}{r}\vartheta^{(1)} \oplus\ldots \oplus \gmultiplicity{\irreps}{r}\vartheta^{(\irreps)}: 
\lineargroup\to\mathrm{GL}(\RX_{r})$ of dimension $\dim(\RX_{r})=\binom{n+r}{r}$, 
where $\gmultiplicity{i}{r}$'s are the multiplicities of irreducible representations. 

On the other hand, if the basis is reordered to
\[
    \bigcup\limits_{i=1}^{\irreps}
    \bigcup\limits_{u=1}^{\irrepdim{i}}
    \{w^{(i)}_{u,j}\,\vert\,1\leq j\leq \gmultiplicity{i}{r}\} ,
\]
the matrix of a $\lineargroup$-equivariant bilinear form 
$\mathcal{L}:\RX_{r} \times \RX_{r} \to \R$ has $\irreps$ blocks, 
each consisting of $\irrepdim{i}$ further equal 
$\gmultiplicity{i}{r} \times \gmultiplicity{i}{r}$ blocks. 
\end{corollary}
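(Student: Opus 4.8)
The plan is to handle the two orderings separately, feeding off the isotypic decomposition \eqref{eq_isotypic_decomposition} together with its two structural consequences: the transformation law of the symmetry-adapted basis built into \Cref{prop_symmetry_adapted_basis} (which governs the group action), and \Cref{lemma_bilinear_form_on_symm_basis} (which governs the equivariant bilinear form). Before either ordering, I would record a grading observation. The projectors $\isotypicprojection{i}{u}$ are $\C$-linear combinations of the degree-preserving maps $p\mapsto p^\sigma$, so $\deg(w^{(i)}_{u,j})=\deg(w^{(i)}_{1,j})$ for every $u$; hence the module $W^{(i)}_j$ is homogeneous of a single degree and lies in $\RX_r$ (after complexification) exactly when $\deg(w^{(i)}_{1,j})\leq r$, which is precisely the count defining $\gmultiplicity{i}{r}$. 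Since the $\lineargroup$-action preserves degree, the decomposition \eqref{eq_isotypic_decomposition} is graded, and restricting to degree at most $r$ gives $\RX_r=\bigoplus_{i=1}^{\irreps}\bigoplus_{j=1}^{\gmultiplicity{i}{r}}W^{(i)}_j$ as a direct sum of $\lineargroup$-submodules.

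For the first ordering (grouping by $i$, then $j$, then $u$), each $W^{(i)}_j$ is a submodule, so every $\sigma\in\lineargroup$ maps it into itself and the matrix of $\sigma$ is block diagonal with one block per pair $(i,j)$. By the construction in \Cref{prop_symmetry_adapted_basis}, the ordered tuple $(w^{(i)}_{1,j},\ldots,w^{(i)}_{\irrepdim{i},j})$ realizes the irreducible matrix representation $\vartheta^{(i)}$ on $W^{(i)}_j$, and the essential point is that the resulting $\irrepdim{i}\times\irrepdim{i}$ block equals $\vartheta^{(i)}(\sigma)$ independently of $j$, because the projectors $\isotypicprojection{i}{u}$ depend only on $\vartheta^{(i)}$ and not on the seed $w^{(i)}_{1,j}$. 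Thus for fixed $i$ there are $\gmultiplicity{i}{r}$ identical blocks, yielding $\vartheta=\gmultiplicity{1}{r}\vartheta^{(1)}\oplus\cdots\oplus\gmultiplicity{\irreps}{r}\vartheta^{(\irreps)}$, and the identity $\dim(\RX_r)=\sum_i\gmultiplicity{i}{r}\irrepdim{i}=\binom{n+r}{r}$ follows by counting monomials of degree at most $r$.

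For the second ordering (grouping by $i$, then $u$, then $j$), the statement is essentially a transcription of \Cref{lemma_bilinear_form_on_symm_basis}. Its first case forces $\mathcal{L}(w^{(i)}_{u,j},w^{(i')}_{u',j'})=0$ whenever $i\neq i'$, producing $\irreps$ diagonal blocks; within a fixed $i$ the second case forces the entries to vanish whenever $u\neq u'$, splitting each block into $\irrepdim{i}$ sub-blocks indexed by $u$. On the sub-block fixed by $(i,u)$ the entry is $\mathcal{L}(w^{(i)}_{u,j},w^{(i)}_{u,j'})=c^{(i)}_{j,j'}$, which is independent of $u$, so the $\irrepdim{i}$ sub-blocks coincide, each equal to the $\gmultiplicity{i}{r}\times\gmultiplicity{i}{r}$ matrix $(c^{(i)}_{j,j'})_{j,j'}$, exactly as claimed.

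The hard part will be the first ordering, specifically justifying that the block attached to $W^{(i)}_j$ is literally $\vartheta^{(i)}(\sigma)$ and is the same for all $j$; the second ordering is immediate once \Cref{lemma_bilinear_form_on_symm_basis} is available. This invariance in $j$ is the defining feature of a symmetry-adapted basis and is encoded in the projector construction of \Cref{prop_symmetry_adapted_basis} (ultimately through Schur's Lemma). I would therefore either invoke that construction directly, or, for self-containedness, verify the transformation law $(w^{(i)}_{u,j})^\sigma=\sum_{u'}\bigl(\vartheta^{(i)}(\sigma)\bigr)_{u',u}\,w^{(i)}_{u',j}$ by a short computation with the operators $\isotypicprojection{i}{u}$, taking care that the indexing convention matches the asserted representation $\vartheta^{(i)}$ rather than its transpose.
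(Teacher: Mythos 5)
Your proposal is correct and takes exactly the route the paper intends: the paper states this corollary without a separate proof, because its second half is a direct transcription of \Cref{lemma_bilinear_form_on_symm_basis} and its first half follows from the graded isotypic decomposition together with the symmetry-adapted basis construction of \Cref{prop_symmetry_adapted_basis}, which is precisely how you argue. Your closing caution is well placed: the transformation law $(w^{(i)}_{u,j})^\sigma=\sum_{u'}\bigl(\vartheta^{(i)}(\sigma)\bigr)_{u',u}\,w^{(i)}_{u',j}$, which makes the $\gmultiplicity{i}{r}$ diagonal blocks literally equal to $\vartheta^{(i)}(\sigma)$ (not merely isomorphic, and independent of $j$), is not explicit in the paper's statement of \Cref{prop_symmetry_adapted_basis} but is contained in the cited result \cite[Ch.~2,~Prop.~8]{serre77}, so invoking that construction (or verifying the law by the short projector computation you describe) closes the only nontrivial step.
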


\subsection{Symmetry Reduction via Symmetry-Adapted Bases}
We assume that \ref{eq_pop} is $\lineargroup$-symmetric, 
that is, $f, g_1, \ldots, g_\ell\in\RX^\lineargroup$. 
Based on the symmetry exploiting variant of \Cref{thm_putinar}, we recall how to compute the minimum $f^*$ 
 efficiently. 
We call 
$\mathcal{S}^{(i)}\coloneqq\{w^{(i)}_{1,j}\,\vert\,j\in J^{(i)}\}$ 
a \textbf{symmetry-adapted basis} 
for the $i$-th isotypic component. 
A $\lineargroup$-invariant linear form 
$L: \RX^{\lineargroup}\to \R$ admits a 
$\lineargroup$-equivariant bilinear form
\begin{equation}\label{eq_bilinear_form}
    \mathcal{L}^\lineargroup_{L}: 
    \RX \times \RX \to \R, 
    (p,q) \mapsto L (\reynolds(p\,q)), 
\end{equation}
which is determined by its values on the 
$\mathcal{S}^{(i)}$. 
Furthermore, 
\begin{equation}\label{eq_localized_linear_form}
    g_k*L: \RX^\lineargroup \to \R,
    p\mapsto L(g_k\,p)
\end{equation}
is also $\lineargroup$-invariant and 
admits a $\lineargroup$-equivariant bilinear form 
$\mathcal{L}^\lineargroup_{g_k*L}:\RX \times \RX \to \R$. 

\begin{theorem}[Symmetric Positivstellensatz]
\label{thm_symm_positivstellensatz}
\cite[Thm.~3.5]{riener2013exploiting}
Assume that $\quadraticmodule{g}$ is Archimedean. 
\begin{enumerate}
\item A $\lineargroup$-invariant linear form $L:\RX^\lineargroup\to \R$ has a representing $\lineargroup$-invariant probability measure on $K$ if and only if $\mathcal{L}^\lineargroup_{g_k*L}$ is positive semidefinite on $\mathcal{S}^{(i)}$ for all $0\leq k\leq \ell,\,1\leq i\leq \irreps$. 
\item A $\lineargroup$-invariant polynomial $p\in\RX^\lineargroup$ 
which is strictly positive on $K$ 
can be written as
\begin{equation}
    p
=   \sum\limits_{i=1}^\irreps
    \sum\limits_{k=0}^\ell 
    g_k \, \reynolds (q^{(i)}_k) , 
\end{equation}
where $q^{(i)}_k\in\soscone{i}{}$ are sums of squares 
of elements in the vector space generated by $\mathcal{S}^{(i)}$. 
\end{enumerate}
\end{theorem}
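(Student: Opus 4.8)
The plan is to derive both parts from Putinar's Positivstellensatz (\Cref{thm_putinar}) combined with the block orthogonality encoded in \Cref{lemma_bilinear_form_on_symm_basis}. The single fact I would isolate first is a \emph{polynomial} orthogonality relation: applying \Cref{lemma_bilinear_form_on_symm_basis} to the family of $\lineargroup$-equivariant bilinear forms $(p,q)\mapsto L(\reynolds(p\,q))$ as $L$ ranges over $\lineargroup$-invariant linear forms, and using that such forms separate the points of $\RX^\lineargroup$ (to which $\reynolds$ maps), I obtain
\[
    \reynolds\bigl(w^{(i)}_{u,j}\,w^{(i')}_{u',j'}\bigr)
=   \delta_{ii'}\,\delta_{uu'}\,P^{(i)}_{j,j'}
\]
for invariant polynomials $P^{(i)}_{j,j'}$ that do \emph{not} depend on $u$. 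This is the engine of the whole argument.

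For the dual statement (2), I would start from the non-symmetric decomposition $p = q_0 + \sum_{k=1}^\ell q_k\,g_k$ with $q_k\in\Sigma\,\RX^2$ furnished by \Cref{thm_putinar}. Averaging with $\reynolds$ and using that $p$ and each $g_k$ are invariant (so $\reynolds(q_k\,g_k)=g_k\,\reynolds(q_k)$ and $\reynolds(p)=p$) reduces the claim to showing that, for any single SOS $q$, the invariant polynomial $\reynolds(q)$ equals $\sum_{i=1}^\irreps\reynolds(q^{(i)})$ for some $q^{(i)}\in\soscone{i}{}$. Writing each square generator of $q$ in the symmetry-adapted basis of \Cref{prop_symmetry_adapted_basis} and expanding $\reynolds(q)$ with the orthogonality relation collapses the double sum to the diagonal blocks, and the $u$-independence of $P^{(i)}_{j,j'}$ packages the surviving coefficients into one positive semidefinite matrix $\Lambda^{(i)}$ per isotypic component. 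A spectral (or Cholesky) factorization of $\Lambda^{(i)}$ then exhibits $\sum_{j,j'}\Lambda^{(i)}_{j,j'}\,w^{(i)}_{1,j}\,w^{(i)}_{1,j'}$ as a sum of squares of elements of $\rspan{\mathcal{S}^{(i)}}$, i.e.\ the desired $q^{(i)}$; setting $g_0=1$ and reindexing gives the stated form.

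The forward implication of (1) is direct: if $\mu$ is an invariant representing measure, then invariance of $\mu$ and of $g_k$ give $\mathcal{L}^\lineargroup_{g_k*L}(w^{(i)}_{1,j},w^{(i)}_{1,j'})=\int_K g_k\,w^{(i)}_{1,j}\,w^{(i)}_{1,j'}\,\mathrm{d}\mu$, so testing the Gram matrix against a vector $c$ yields $\int_K g_k\,\bigl(\sum_j c_j\,w^{(i)}_{1,j}\bigr)^2\,\mathrm{d}\mu\ge 0$ because $g_k\ge 0$ on $K$ and the $w^{(i)}_{1,j}$ are real. For the converse I would extend $L$ to $\tilde L\coloneqq L\circ\reynolds$ on all of $\RX$, which is invariant and restricts to $L$ on $\RX^\lineargroup$. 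The block repackaging from (2) shows that for any SOS $q$, $(g_k*L)(\reynolds(q))=\sum_i\langle\Lambda^{(i)},M^{(i)}_k\rangle$, where $M^{(i)}_k$ is the Gram matrix of $\mathcal{L}^\lineargroup_{g_k*L}$ on $\mathcal{S}^{(i)}$, positive semidefinite by hypothesis; since $\Lambda^{(i)}\succeq 0$ each Frobenius pairing is nonnegative, whence $\tilde L\ge 0$ on $\quadraticmodule{g}$. \Cref{thm_putinar} then supplies a representing probability measure $\nu$ for $\tilde L$ on $K$, and averaging $\nu$ over $\lineargroup$ (legitimate since $K$ is $\lineargroup$-invariant) produces an invariant probability measure $\mu$ with $\int p\,\mathrm{d}\mu=\tilde L(\reynolds(p))$, which agrees with $L$ on $\RX^\lineargroup$.

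The main obstacle is precisely the folding step in (2): converting a generic SOS into a sum of block-SOS supported only on the $u=1$ representatives $\mathcal{S}^{(i)}$. This rests entirely on the two outputs of \Cref{lemma_bilinear_form_on_symm_basis} — vanishing of the cross terms and $u$-independence of the diagonal constants — so the delicate points are the passage from the bilinear-form statement of that lemma to the polynomial identity for $\reynolds(w^{(i)}_{u,j}w^{(i')}_{u',j'})$, and the bookkeeping that assembles the coefficients into a genuinely positive semidefinite $\Lambda^{(i)}$. The remaining ingredients — invoking \Cref{thm_putinar}, checking the normalization $\tilde L(1)=L(1)=1$, and symmetrizing $\nu$ — are routine.
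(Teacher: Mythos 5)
The paper does not prove this theorem at all: it is quoted from \cite[Thm.~3.5]{riener2013exploiting}, and the only related argument in the paper itself is the appendix proof of \Cref{thm_symm_moment_sos_hierarchy_SDP}, which treats the easier, already-reduced correspondence between Gram matrices and block SOS. So your proposal has to stand on its own, and it does: it is in substance a reconstruction of the cited reference's argument. Your central device --- upgrading \Cref{lemma_bilinear_form_on_symm_basis} to the polynomial identity $\reynolds(w^{(i)}_{u,j}\,w^{(i')}_{u',j'})=\delta_{ii'}\,\delta_{uu'}\,\reynolds(w^{(i)}_{1,j}\,w^{(i)}_{1,j'})$ by letting $L$ range over all invariant linear forms (equivalently, all linear forms on $\RX^\lineargroup$ precomposed with $\reynolds$) and using that these separate points of $\RX^\lineargroup$ --- is legitimate, and it is exactly what is needed to fold $\reynolds(q)$, for an arbitrary SOS $q$, into $\sum_i\reynolds(q^{(i)})$ with $q^{(i)}\in\soscone{i}{}$: the matrix $\Lambda^{(i)}_{j,j'}=\sum_{t,u}\lambda^t_{i,u,j}\lambda^t_{i,u,j'}$ is a sum of outer products, hence positive semidefinite, and its factorization yields the block SOS. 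Part (2) then follows from \Cref{thm_putinar} plus the identity $\reynolds(q_k\,g_k)=g_k\,\reynolds(q_k)$ for invariant $g_k$, and both directions of part (1) are handled correctly; in particular you rightly symmetrize the Putinar measure $\nu$, since a representing measure of the invariant extension $\tilde L=L\circ\reynolds$ need not itself be invariant, so that averaging step is not optional. One caveat should be recorded: your orthogonality relation, exactly like \Cref{lemma_bilinear_form_on_symm_basis} and \Cref{lemma_blockdiagonal} as stated in the paper, treats the full family $\{w^{(i)}_{u,j}\}$ as a real basis with real structure constants, whereas the modules $W^{(i)}_j$ are complex in general; expanding a real polynomial $q_t$ in a genuinely complex basis would make your coefficients $\lambda^t_{i,u,j}$ complex and $\Lambda^{(i)}$ no longer obviously positive semidefinite. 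For irreducible representations of real type (which covers the dihedral, cyclic-product and symmetric group examples of the paper) your argument is complete; in general it needs the real-form adjustment alluded to after \Cref{prop_symmetry_adapted_basis}, cf.\ \cite[Eq.~2.9]{riener2013exploiting}. This imprecision is inherited from the paper's own preliminaries rather than introduced by you.
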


Set $g_0\coloneqq1$, $d_k\coloneqq\lceil\deg(g_k)/2\rceil$ and
\[
    r_{\min}
\coloneqq  \max \{ \lceil\deg(f)/2\rceil, d_1, \ldots, d_\ell \} . 
\]
For $r\geq r_{\min}$, 
the restriction of $L$ to $\RX^\lineargroup_{2r}$ admits bilinear forms 
\begin{equation}
    \mathcal{L}^\lineargroup_{g_k*L}:\,
    \RX_{r-d_k}\times\RX_{r-d_k}\to\R,
\end{equation}
which are determined by their values on 
$\mathcal{S}^{(i)}_{r-d_k}\coloneqq\{w^{(i)}_{1,j}\,\vert\,1\leq j\leq m^{(i)}_{r-d_k}\}$. 
The \textbf{truncated localized symmetry-adapted moment matrices} 
are therefore the block diagonal matrices
\begin{equation}\label{eq_moment_matrix_blocks}
    \gmomentmatrix{g_k*L}{r-d_k}
\coloneqq  \begin{pmatrix}
    \boxed{\momentblock{g_k*L}{1}{r-d_k}} &&\\
    &\ddots&\\
    &&\boxed{\momentblock{g_k*L}{\irreps}{r-d_k}}
    \end{pmatrix}
\end{equation}
with entries  
\begin{equation}\label{eq_moment_matrix_entries}
    (\momentblock{g_k*L}{i}{r-d_k})_{j,j'}
\coloneqq  \mathcal{L}^\lineargroup_{g_k*L} ( w^{(i)}_{1,j},w^{(i)}_{1,j'} ) ,\quad
    1\leq j,j'\leq m^{(i)}_{r-d_k} . 
\end{equation}
The (primal) \textbf{symmetry-adapted moment relaxation of order $r$} is
\begin{equation}
    f^*
\geq f^{r}_{\mathrm{mom}}
\coloneqq  \begin{array}[t]{rl}
    \inf        &   L(f) \\
    \mbox{s.t.} &   L\in(\RX_{2r}^\lineargroup)^*, L(1)=1, \\ 
                &   \forall\,0\leq k\leq \ell,
                    1\leq i\leq \irreps:
                    \momentblock{g_k*L}{i}{r}\succeq 0
    \end{array}
\end{equation}
and the (dual) \textbf{symmetry-adapted SOS reinforcement of order $r$} is
\begin{equation}
    f^*
\geq f^{r}_{\mathrm{sos}}
\coloneqq  \begin{array}[t]{rl}
    \sup        &   \lambda \\
    \mbox{s.t.} &   \lambda \in \R, q^{(i)}_k \in \soscone{i}{r-d_k}, \\
                &   f-\lambda = 
                    \sum\limits_{k=0}^\ell
                    \sum\limits_{i=1}^{\irreps}
                    \reynolds(q^{(i)}_k\,g_k)  .
    \end{array}
\end{equation}

\begin{theorem}
\cite[Thm.~3.4]{riener2013exploiting}
For $r\geq r_{\min}$, 
the sequences $(f^{r}_{\mathrm{mom}})_{r\in\mathbb{N}}$ and 
$(f^{r}_{\mathrm{sos}})_{r\in\mathbb{N}}$ are monotonically nondecreasing 
with $f^{r}_{\mathrm{mom}}\geq f^{r}_{\mathrm{sos}}$. 
If the quadratic module $\quadraticmodule{g}$ is Archimedean, 
then $f^* = f_{\mathrm{mom}} = \lim_{r\to\infty} f^{r}_{\mathrm{mom}} = f_{\mathrm{sos}} = \lim_{r\to\infty} f^{r}_{\mathrm{sos}}$. 
\end{theorem}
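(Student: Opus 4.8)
The plan is to mirror the classical Lasserre convergence argument, using the Reynolds operator $\reynolds$ as the bridge between $\lineargroup$-invariant and arbitrary polynomials and invoking the Symmetric Positivstellensatz (\Cref{thm_symm_positivstellensatz}) for the final step. Monotonicity comes first. On the moment side, restricting a feasible $L$ for order $r+1$ to $\RX^\lineargroup_{2r}$ yields a feasible point for order $r$ with the same value $L(f)$, since each block $\momentblock{g_k*L}{i}{r-d_k}$ is a principal submatrix of $\momentblock{g_k*L}{i}{r+1-d_k}$ and positive semidefiniteness is inherited by principal submatrices; hence $f^r_{\mathrm{mom}}\leq f^{r+1}_{\mathrm{mom}}$. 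On the SOS side, the inclusions $\soscone{i}{r-d_k}\subseteq\soscone{i}{r+1-d_k}$ keep every feasible triple feasible, so $f^r_{\mathrm{sos}}\leq f^{r+1}_{\mathrm{sos}}$.

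Next I would prove weak duality $f^r_{\mathrm{mom}}\geq f^r_{\mathrm{sos}}$. Given feasible $L$ and $(\lambda,(q^{(i)}_k))$, I would use $L(1)=1$ together with the $\lineargroup$-invariance of $L$ (so that $L\circ\reynolds=L$) to write
\begin{equation}
L(f)-\lambda
= L\!\left(\sum_{k=0}^\ell\sum_{i=1}^\irreps\reynolds(q^{(i)}_k\,g_k)\right)
= \sum_{k,i} L(q^{(i)}_k\,g_k).
\end{equation}
Decomposing $q^{(i)}_k=\sum_t p_t^2$ with $p_t=\sum_j a_{t,j}\,w^{(i)}_{1,j}\in\rspan{\mathcal{S}^{(i)}_{r-d_k}}$ and using the invariance of both $g_k$ and $L$ to absorb the Reynolds operator in the defining equation \eqref{eq_bilinear_form} of $\mathcal{L}^\lineargroup_{g_k*L}$, each summand becomes $L(p_t^2\,g_k)=\mathcal{L}^\lineargroup_{g_k*L}(p_t,p_t)=\transpose{\mathbf{a}_t}\,\momentblock{g_k*L}{i}{r-d_k}\,\mathbf{a}_t\geq 0$, the inequality following from $\momentblock{g_k*L}{i}{r-d_k}\succeq 0$. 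Summing and then taking the infimum over $L$ and the supremum over $\lambda$ yields the claim.

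For the upper bound and convergence, I would first symmetrize a minimizer: if $X^*\in K$ attains $f^*$, then $\mu\coloneqq\frac{1}{\vert\lineargroup\vert}\sum_{\sigma\in\lineargroup}\delta_{\sigma\cdot X^*}$ is a $\lineargroup$-invariant probability measure on $K$ (using invariance of $f$ and of the $g_k$) whose associated invariant linear form is feasible at every order with value $f^*$, so that $f^*\geq f^r_{\mathrm{mom}}$ for all $r$. Combined with weak duality, $f^*\geq f^r_{\mathrm{mom}}\geq f^r_{\mathrm{sos}}$. For the matching bound on the limit, I would fix $\varepsilon>0$, note that $f-(f^*-\varepsilon)$ is $\lineargroup$-invariant and strictly positive on $K$, and apply part~2 of \Cref{thm_symm_positivstellensatz} to obtain $f-(f^*-\varepsilon)=\sum_{i,k}g_k\,\reynolds(q^{(i)}_k)$ with $q^{(i)}_k\in\soscone{i}{}$; since $g_k$ is invariant, $g_k\reynolds(q^{(i)}_k)=\reynolds(q^{(i)}_k\,g_k)$, which is exactly the form required by $f^r_{\mathrm{sos}}$. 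As this certificate has bounded degree, it witnesses feasibility of $\lambda=f^*-\varepsilon$ for all sufficiently large $r$, so $\lim_{r\to\infty}f^r_{\mathrm{sos}}\geq f^*-\varepsilon$; letting $\varepsilon\to 0$ forces $f^*=\lim_r f^r_{\mathrm{mom}}=\lim_r f^r_{\mathrm{sos}}$.

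The main obstacle I anticipate is the weak-duality identity $L(p^2\,g_k)=\mathcal{L}^\lineargroup_{g_k*L}(p,p)$: it hinges on $L$ and $g_k$ being simultaneously $\lineargroup$-invariant, which is precisely what lets the Reynolds operator in \eqref{eq_bilinear_form} be moved past $g_k$ and then annihilated against $L$. This identity is what guarantees that the block-diagonal symmetry-adapted moment matrices certify positivity with no loss relative to the dense formulation; once it and \Cref{thm_symm_positivstellensatz} are in hand, the convergence itself is routine.
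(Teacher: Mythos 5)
The paper never proves this theorem itself — it is imported verbatim from \cite[Thm.~3.4]{riener2013exploiting} (the only related argument the paper supplies is the SDP reformulation of \Cref{thm_symm_moment_sos_hierarchy_SDP} in \Cref{appendix_rep_theo}) — so there is no internal proof to compare against, and your argument is exactly the standard route taken in that cited source. Each step checks out: monotonicity via principal submatrices and the inclusions $\soscone{i}{r-d_k}\subseteq\soscone{i}{r+1-d_k}$; weak duality via $L(\reynolds(q^{(i)}_k\,g_k))=L(g_k\,\reynolds(p_t^2))=\mathcal{L}^\lineargroup_{g_k*L}(p_t,p_t)$ summed over squares (valid because $g_k\in\RX^\lineargroup$ lets the Reynolds operator slide past $g_k$, and $q^{(i)}_k\,g_k$ has degree at most $2r$ so $L$ is defined on it); the symmetrized Dirac measure giving $f^{r}_{\mathrm{mom}}\leq f^*$; and \Cref{thm_symm_positivstellensatz} with the bounded-degree certificate for $f-(f^*-\varepsilon)$ giving the matching lower bound on the limit, while the remaining equalities $f^*=f_{\mathrm{mom}}=f_{\mathrm{sos}}$ are already recorded in the paper as a direct consequence of \Cref{thm_putinar}.
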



After reordering, we may assume without loss of generality 
that the first irreducible $\lineargroup$-modules 
$W^{(1)}_j$ in the isotypic decomposition 
afford the trivial representation with $\vartheta^{(1)}(s) = 1$ 
for every $s\in\lineargroup$ of dimension $\irrepdim{1}=1$. 
In particular, 
\begin{equation}
    \reynolds
=   \isotypicprojection{1}{1}:
    \RX\to \RX, 
    p\mapsto 
    \frac{1}{\vert\lineargroup\vert}
    \sum\limits_{\sigma\in\lineargroup}
    p^\sigma 
\end{equation}
is the Reynolds operator with image $\RX^\lineargroup$ and the set 
\begin{equation}
    \mathcal{S}^{(1)}_{2r}
=   \{w^{(1)}_{1,1},\ldots,w^{(1)}_{1,\gmultiplicity{1}{2r}}\}
\end{equation}
is a basis for the vector space $\RX^\lineargroup_{2r}$ of 
$\lineargroup$-invariant polynomials of degree at most $2r$. 
The \textbf{pseudomoments} of a $\lineargroup$-invariant linear form $L$ 
in this basis are $\gmoment{j} \coloneqq L(w^{(1)}_{1,j})$ and 
form a sequence $\mathbf{y}\in\R^{\gmultiplicity{1}{2r}}$. 
Moreover, we assume that 
$w^{(1)}_{1,1}=1$. 
Hence, 
if $L$ has a representing probability measure, 
then $\gmoment{1} = L(1) = 1$. 
Denote by $\sdpcone{i}{r}$ the vector space of real symmetric 
$\gmultiplicity{i}{r} \times \gmultiplicity{i}{r}$ matrices. 
There exist unique scalar coefficients $f_j\in\R$ and 
matrix coefficients $\sdpmatrix{i}{r,k,j} \in \sdpcone{i}{r-d_k}$, 
such that 
\begin{equation}\label{eq_sdp_coefficients}
    f
=   \sum\limits_{j=1}^{\gmultiplicity{1}{2r}} f_j\,w^{(1)}_{1,j}
    \quad \mbox{and} \quad
    \reynolds
    (w^{(i)}_{1,j'} \, w^{(i)}_{1,j''} \, g_k)
=   \sum\limits_{j=1}^{\gmultiplicity{1}{2r}} 
    (\sdpmatrix{i}{r,k,j})_{j',j''}\,w^{(1)}_{1,j} .
\end{equation}
In particular, we have 
\begin{equation}
    L(f)
=   \sum\limits_{j=1}^{\gmultiplicity{1}{2r}} f_j\,\gmoment{j}
    \quad \mbox{and} \quad
    \momentblock{g_k*L}{i}{r}
=   \sum\limits_{j=1}^{\gmultiplicity{1}{2r}} 
    \sdpmatrix{i}{r,k,j}\,\gmoment{j} ,\, 
    0\leq k\leq \ell. 
\end{equation}
We also write $\momentblock{g_k*\mathbf{y}}{i}{r}$ 
instead of $\momentblock{g_k*L}{i}{r}$.

\begin{corollary}
\label{thm_symm_moment_sos_hierarchy_SDP}
The symmetry-adapted moment-SOS hierarchy of order $r$ 
can be written as an SDP:
\[
    f^{r}_{\mathrm{mom}}
=   \begin{array}[t]{rl}
    \inf        &   \sum_j f_j\,\gmoment{j} \\
    \mbox{s.t.} &   \gmoment{1} = 1,
                    \forall \, k,i: \\
                &   \sum_j \sdpmatrix{i}{r,k,j}\,\gmoment{j}
                    \succeq 0
    \end{array}
    \quad \mbox{and} \quad
    f^{r}_{\mathrm{sos}}
=   \begin{array}[t]{rl}
    \sup        &   f_1-
                    \sum_{k,i}
                    \trace(\sdpmatrix{i}{r,k,1}\,\sosmatrix{i}{k}) \\
    \mbox{s.t.} &   \sosmatrix{i}{k}\succeq 0,\forall\,j\geq 2: \\
                &   f_j=
                    \sum_{k,i}
                    \trace(\sdpmatrix{i}{r,k,j}\,\sosmatrix{i}{k}) ,
    \end{array}
\]
where the infimum is taken over finite sequences 
$\mathbf{y}\in\R^{\gmultiplicity{1}{2r}}$ and
the supremum is taken over positive semidefinite matrices 
$\sosmatrix{i}{k} \in \sdpcone{i}{r-d_k}$. 
\end{corollary}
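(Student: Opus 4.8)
The plan is to handle the primal and dual sides separately, in each case translating the abstract optimization (over $\lineargroup$-invariant linear forms, resp.\ over SOS certificates) into a finite-dimensional semidefinite program by expanding everything in the single invariant basis $\{w^{(1)}_{1,j}\}$ of $\RX^\lineargroup_{2r}$ and then invoking uniqueness of coordinates in that basis.

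For the moment side, I would first record the linear isomorphism $(\RX^\lineargroup_{2r})^* \to \R^{\gmultiplicity{1}{2r}}$, $L \mapsto \gmomentsequence$, given by $\gmoment{j} = L(w^{(1)}_{1,j})$; this is a bijection precisely because $\mathcal{S}^{(1)}_{2r} = \{w^{(1)}_{1,j}\}$ is a basis of $\RX^\lineargroup_{2r}$. Under this identification the objective becomes $L(f) = \sum_j f_j\,\gmoment{j}$ by linearity and the expansion of $f$ from \Cref{eq_sdp_coefficients}, while the normalization $L(1) = 1$ reads $\gmoment{1} = 1$ since $w^{(1)}_{1,1} = 1$. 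The block constraints reduce to the identity $\momentblock{g_k*L}{i}{r} = \sum_j \sdpmatrix{i}{r,k,j}\,\gmoment{j}$ displayed just before the statement: entrywise this follows from \Cref{eq_moment_matrix_entries}, the invariance of $g_k$ (which gives $g_k\,\reynolds(pq) = \reynolds(g_k\,pq)$), and, after applying $L$, the expansion of $\reynolds(w^{(i)}_{1,j'}\,w^{(i)}_{1,j''}\,g_k)$ from \Cref{eq_sdp_coefficients}. Since this correspondence is a bijection between feasible $L$ and feasible $\gmomentsequence$, substituting the three rewritings into the definition of $f^r_{\mathrm{mom}}$ yields the asserted primal SDP.

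For the SOS side, the essential tool is the Gram matrix parametrization of sums of squares: a polynomial lies in $\soscone{i}{r-d_k}$ if and only if there is a matrix $\sosmatrix{i}{k} \in \sdpcone{i}{r-d_k}$ with $\sosmatrix{i}{k} \succeq 0$ and $q^{(i)}_k = \sum_{j',j''} (\sosmatrix{i}{k})_{j',j''}\,w^{(i)}_{1,j'}\,w^{(i)}_{1,j''}$, the sum ranging over $1 \le j',j'' \le \gmultiplicity{i}{r-d_k}$. Multiplying by $g_k$, applying $\reynolds$, and expanding each product through \Cref{eq_sdp_coefficients} then gives $\reynolds(q^{(i)}_k\,g_k) = \sum_j \trace(\sdpmatrix{i}{r,k,j}\,\sosmatrix{i}{k})\,w^{(1)}_{1,j}$, where I use $\sum_{j',j''} A_{j',j''}\,Q_{j',j''} = \trace(A\,Q)$ for symmetric $A,Q$. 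Writing $f - \lambda = (f_1 - \lambda)\,w^{(1)}_{1,1} + \sum_{j \ge 2} f_j\,w^{(1)}_{1,j}$ and comparing coefficients of the certificate identity in the linearly independent family $\{w^{(1)}_{1,j}\}$ produces $\lambda = f_1 - \sum_{k,i} \trace(\sdpmatrix{i}{r,k,1}\,\sosmatrix{i}{k})$ from the $j=1$ component and $f_j = \sum_{k,i} \trace(\sdpmatrix{i}{r,k,j}\,\sosmatrix{i}{k})$ for $j \ge 2$. Maximizing $\lambda$ over admissible tuples of positive semidefinite $\sosmatrix{i}{k}$ is then literally the stated dual SDP.

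I expect most of the argument to be routine bookkeeping, so the substantive points are narrow. The first is the exact identification of $\soscone{i}{r-d_k}$ with positive semidefinite Gram matrices of size $\gmultiplicity{i}{r-d_k}$ in $\sdpcone{i}{r-d_k}$, which rests on $\mathcal{S}^{(i)}_{r-d_k}$ spanning the relevant space; the second is the invariance identity $g_k\,\reynolds(pq) = \reynolds(g_k\,pq)$ that linearizes the localized bilinear forms. The main obstacle, such as it is, is purely organizational: one must check that the degree truncations are consistent, so that each $\reynolds(w^{(i)}_{1,j'}\,w^{(i)}_{1,j''}\,g_k)$ lies in $\RX^\lineargroup_{2r}$ and expands in the \emph{single} invariant basis $\{w^{(1)}_{1,j} : 1 \le j \le \gmultiplicity{1}{2r}\}$, making the simultaneous coefficient-matching across all blocks $i$ and constraints $k$ legitimate.
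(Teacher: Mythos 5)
Your proposal is correct and follows essentially the same route as the paper: the moment side is a direct change of coordinates via the basis $\{w^{(1)}_{1,j}\}$ (which the paper dismisses as clear), and the SOS side rests on the Gram matrix correspondence for $\soscone{i}{r-d_k}$ together with the expansion of $\reynolds(w^{(i)}_{1,j'}\,w^{(i)}_{1,j''}\,g_k)$ from \Cref{eq_sdp_coefficients}, the trace identity, and coefficient comparison in the invariant basis. The only cosmetic difference is that you invoke the Gram parametrization as a known equivalence, whereas the paper proves both directions explicitly (orthogonal diagonalization for one, outer products of coefficient vectors for the other).
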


\begin{remark}
If $\gmomentsequence,\sosmatrix{i}{k}$ 
are optimal for the SDP, 
then the duality gap is 
\[
    f^{r}_{\mathrm{mom}} - f^{r}_{\mathrm{sos}} 
=   \sum_{k,i} \trace(
    \momentblock{g_k*\mathbf{y}}{i}{r-d_k} \cdot \sosmatrix{i}{k}) 
\geq 0.
\]
\end{remark}

%

\section{Symmetry Exploitation for Sparse Problems}
\label{section_3_symmetry_and_sparsity}

We now explain how to apply term sparsity exploitation when the moment-SOS-hierarchy is formulated in a symmetry-adapted basis. 
This leads to a generalization of the TSSOS hierarchy from \cite{magron21} and we prove convergence in \Cref{thm_sparsity_convergence_1,thm_sparsity_convergence_2}. 

\subsection{Chordal Graphs and Sparse Matrices}

Let $\mathrm{G}$ be an undirected graph with vertices $V$ and edges $E$. 
Then $\mathrm{G}$ is called a \textbf{chordal graph} if every cycle of $\mathrm{G}$ has a chord, that is, 
an edge that joins two nonconsecutive vertices. 
A \textbf{chordal extension} of $\mathrm{G}$ is a chordal graph 
that contains $\mathrm{G}$ as a subgraph. Chordal extensions are not unique. A \textbf{maximal} chordal extension is the chordal extension that completes every connected component of $G$.
A \textbf{clique} of $\mathrm{G}$ is a subset of $V$ that admits a complete subgraph, 
complete meaning that any two vertices are connected. 
A \textbf{maximal clique} of $\mathrm{G}$ is a clique of $\mathrm{G}$ that is not contained in another one. A chordal extension with the smallest clique number (i.e. the maximal size of maximal cliques) is called a \textbf{smallest} chordal extension.

\begin{definition}
A \textbf{binary matrix} $\mathbf{B}$ is a matrix with entries $0$ and $1$. 
\begin{itemize}
\item The support $\supp(\mathbf{B})$ is the set of all $(j,j')$ with $\mathbf{B}_{j,j'}=1$. 
\item If $\mathbf{B}$ is square, 
$\mathrm{Sym}(\mathbf{B})$ denotes the vector space of 
real symmetric matrices $\mathbf{Q}$ of the same size 
with $\mathbf{Q}_{j,j'}=0$ if $\mathbf{B}_{j,j'}=0$. 

\item The \textbf{block closure} of $\mathbf{B}$ is a 
binary matrix $\overline{\mathbf{B}}$ of the same size, 
which is minimal with respect to the properties
\begin{enumerate}
\item $\supp(\mathbf{B}) \subseteq \supp(\overline{\mathbf{B}})$ and
\item if $\overline{\mathbf{B}}_{j,j'}=\overline{\mathbf{B}}_{j',j''}=1$, 
then $\overline{\mathbf{B}}_{j,j''}=1$. 
\end{enumerate}
\end{itemize}
\end{definition}

\begin{remark}
A symmetric binary matrix is the companion matrix of an undirected graph, and the block closure is the matrix of the maximal chordal extension (obtained after adding the dotted edge in the graph below). 
\[
	\mathbf{B}
=   \begin{pmatrix}
	1 & 0 & 1 & \textcolor{red}{0} \\
	0 & 1 & 0 & 0 \\
	1 & 0 & 1 & 1 \\
	\textcolor{red}{0} & 0 & 1 & 1 \\
	\end{pmatrix}
	\hspace{3cm}
	\overline{\mathbf{B}}
=   \begin{pmatrix}
	1 & 0 & 1 & \textcolor{red}{1} \\
	0 & 1 & 0 & 0 \\
	1 & 0 & 1 & 1 \\
	\textcolor{red}{1} & 0 & 1 & 1 \\
	\end{pmatrix}
\]
\[
		\begin{tikzpicture}
			\node[shape=circle,draw=black] (1) at (0,2) {1};
			\node[shape=circle,draw=black] (3) at (0,0) {3};
			\node[shape=circle,draw=black] (4) at (2,0) {4};
			\node[shape=circle,draw=black] (2) at (2,2) {2};
			
			\path [-] (1) edge node[left] {} (3);
			\path [-] (3) edge node[left] {} (4);
			
			\draw (1.7,0.3) -- (0.3,1.7)[dashed,color=red];
		\end{tikzpicture}
		\]
\end{remark}

\subsection{Term Sparsity}



Let $f,g_1,\ldots,g_\ell\in\RX^\lineargroup$ be as in \Cref{eq_pop} and 
$w^{(i)}_{1,j}$ be a symmetry-adapted basis as in \Cref{lemma_blockdiagonal}. 

\begin{definition}\label{defi_tsp_matrix}
For $p=\sum_j p_j\,w^{(1)}_{1,j} \in \RX^\lineargroup_{2r}$, 
denote by $\supp(p)$ the set of all 
$1\leq j\leq \gmultiplicity{1}{2r}$ with $p_j\neq 0$, and set
\[
    \mathcal{B}_r
\coloneqq  \supp(f) \cup 
    \bigcup\limits_{k=1}^\ell 
    \supp(g_k) \cup 
    \bigcup\limits_{i=1}^{\irreps}
    \bigcup\limits_{j=1}^{\gmultiplicity{1}{r}}
    \supp\left(\reynolds((w^{(i)}_{1,j})^2)\right)
\]
as well as $\tspsupport{i}{r,0,0} \coloneqq \mathcal{B}_r$ and 
$\tspsupport{i}{r,0,k} \coloneqq \emptyset$. 
For $0\leq k\leq \ell, 1\leq i\leq \irreps$ and $s\geq 1$, 
we define a symmetric binary matrix 
$\tspmatrix{i}{r,s,k} \in \sdpcone{i}{r-d_k}$ 
in two successive steps:
\begin{enumerate}
\item \underline{Support extension}: 
Define a binary matrix 
$\pretspmatrix{i}{r,s,k} \in \sdpcone{i}{r-d_k}$ 
with entries
\[
    (\pretspmatrix{i}{r,s,k})_{j,j'}
\coloneqq  \begin{cases}
    1,  &   \text{if } \supp\left(
            \reynolds(
            w^{(i)}_{1,j}\,w^{(i)}_{1,j'}\,g_k)
            \right) \cap
            \bigcup\limits_{k'=0}^\ell
            \tspsupport{i}{r,s-1,k'} 
            \neq \emptyset, \\
    0,  &   \mbox{otherwise}.
    \end{cases}
\]
The corresponding graph $\mathrm{G}^{\mathrm{tsp}}$ 
is called the \textbf{tsp graph of sparsity order $s$}. 
\item \underline{Block closure}: 
Choose
$\tspmatrix{i}{r,s,k}\coloneqq\overline{\pretspmatrix{i}{r,s,k}}$
and set
\[
    \tspsupport{i}{r,s,k}
\coloneqq  \bigcup\limits_{(\pretspmatrix{i}{r,s,k})_{j,j'}=1}
    \supp\left(
    \reynolds(w^{(i)}_{1,j}\,w^{(i)}_{1,j'}\,g_k)
    \right) .
\]
\end{enumerate} 
\end{definition}

Before we build a hierarchy around this definition, we emphasize the following. 

\begin{remark}
Ignoring the ``diagonal squares'', that is, the diagonal elements of the moment matrix, 
\[
    \bigcup\limits_{i=1}^{\irreps}
    \bigcup\limits_{j=1}^{\gmultiplicity{1}{r}}
    \supp\left(\reynolds((w^{(i)}_{1,j})^2)\right)
\]
in the definition of $\mathcal{B}_r$ is possible and can 
make the computations more efficient (\Cref{fig_1D_ring,fig_2D_torus}). 
Similarly, the choice of block closures/chordal extensions in the definition of 
$\tspmatrix{i}{r,s,k}$ influences the convergence; 
see \Cref{thm_sparsity_convergence_1,thm_sparsity_convergence_2} and \Cref{fig_1D_ring,fig_2D_torus,fig_daniel_vs_pop}. 

The initial TSSOS method \cite{magron21} defines the tsp graph 
in the standard monomial basis $\{x^\alpha\}_{\vert\alpha\vert\leq 2r}$ 
instead of the symmetry-adapted basis $\{w^{(i)}_{1,j}\}$. 
In particular, there is no need for a Reynolds operator, but sign symmetry with respect to the two-element group $\{\pm 1\}$ could already be exploited earlier. 
We give a quantitative comparison in \Cref{section_4_benchmarks}. 
\end{remark}

\subsection{Example}

We construct the tsp graph for the example in \cite[Ex.~4.8]{riener2013exploiting} for the symmetric group 
$\lineargroup = \symmetricgroup{3}$ of order $6$. 
It is generated by the transpositions 
$\sigma_1=(1,2)$ and $\sigma_2=(2,3)$, 
or, as a matrix group, by 
\begin{equation}
    \sigma_1 
=   \begin{pmatrix} 
    0 & 1 & 0 \\
    1 & 0 & 0 \\
    0 & 0 & 1
    \end{pmatrix}
    \quad \mbox{and} \quad 
    \sigma_2
=   \begin{pmatrix} 
    1 & 0 & 0 \\
    0 & 0 & 1 \\
    0 & 1 & 0
    \end{pmatrix} .
\end{equation}
There are $h=3$ conjugacy classes, represented by 
$\mathrm{id}$, $\sigma_1$, $\sigma_1\,\sigma_2$, 
and thus $3$ complex irreducible representations, 
encoded by partitions $\lambda$ of $n=3$ \cite[Ch.4.2]{fulton13}. 
The corresponding $\lineargroup$-modules are the Specht modules $\mathbb{S}^\lambda$ \cite[Ch.7.1]{JamesKerber1984}. 
We have
the trivial representation 
$\vartheta^{(1)}:\symmetricgroup{3} \to \mathrm{GL}(\mathbb{S}^{3})$ 
of dimension $1$, 
the reflection representation 
$\vartheta^{(2)}:\symmetricgroup{3} \to \mathrm{GL}(\mathbb{S}^{21})$ 
of dimension $2$ and 
the sign representation 
$\vartheta^{(3)}:\symmetricgroup{3} \to \mathrm{GL}(\mathbb{S}^{111})$ 
of dimension $1$. 
The resulting \textbf{character table} contains the traces of these 
representations on the conjugacy classes 
\begin{equation}
	\chi
=   (\chi^{(i)}(\sigma))_{i,\sigma}
=   (\mathrm{Trace}(\vartheta^{(i)}(\sigma)))_{i,\sigma}
=	\begin{matrix}
        &   
        \begin{matrix}
            \phantom{-} \textcolor{gray}{\mathrm{id}} & 
            \phantom{..} \textcolor{gray}{\sigma_1} \phantom{..} & 
            \textcolor{gray}{\sigma_1\,\sigma_2} 
        \end{matrix} \\
	    \begin{matrix}
            \phantom{-} \textcolor{gray}{\mathbb{S}^{3}} \\ 
            \phantom{-} \textcolor{gray}{\mathbb{S}^{21}} \\ 
            \phantom{-} \textcolor{gray}{\mathbb{S}^{111}} 
        \end{matrix} 
        & 
	    \begin{pmatrix} 
            \phantom{-} 1 \phantom{.}   &		   -  1 \phantom{-}	    &  \phantom{-} 1    &\\ 
		      \phantom{-} 2 \phantom{.}	  &	\phantom{-} 0 \phantom{-}     &   		  -  1	  &\\ 
		      \phantom{-} 1 \phantom{.}	  &	\phantom{-} 1 \phantom{-}	  &  \phantom{-} 1 	  &
        \end{pmatrix}
	\end{matrix} . 
\end{equation}
The induced action on $\RX=\R[x_1,x_2,x_3]$ is given by permutation of variables. 
Let us consider polynomials up to degree $r=2$. 
This defines a representation 
$\vartheta: \lineargroup \to \mathrm{GL}(\RX_{2})$ 
of dimension $\binom{3+2}{2} = 10$. 
The multiplicities of the irreducible representations in $\vartheta$ 
are computed by solving the $\irreps$-dimensional linear system 
\begin{equation}
    \trace(\vartheta(\sigma)) 
=   m^{(1)}_2 \, \chi^{(1)}(\sigma) + 
    m^{(2)}_2 \, \chi^{(2)}(\sigma) + 
    m^{(3)}_2 \, \chi^{(3)}(\sigma) 
\end{equation}
on the conjugacy classes, 
which yields 
$\gmultiplicity{1}{2}=4,\,\gmultiplicity{2}{2}=3,\,\gmultiplicity{3}{2}=0$. 
A symmetry-adapted basis for $\RX_{2}$ consists of 
\begin{equation}
    \mathcal{S}^{(1)}_2
=   \{  1,\,
        a_1=x_1+x_2+x_3,\,
        a_2=x_1^2+x_2^2+x_3^2,\,
        a_{11}=x_1 x_2+x_2 x_3+x_3 x_1\}
\end{equation}
and
\begin{equation}
    \mathcal{S}^{(2)}_2
=   \{  b_1=2 x_3-x_2-x_1,\,
        b_2=2 x_3^2-x_2^2-x_1^2,\,
        b_{11}=-2 x_1 x_2+x_2 x_3+x_3 x_1\} . 
\end{equation}
There is no $\mathcal{S}^{(3)}_2$, because $\gmultiplicity{3}{2}=0$. 
Hence, the symmetry-adapted moment matrix of a $\symmetricgroup{3}$-invariant 
linear form $L$ with moments $\gmomentsequence$ has two blocks, namely
\begin{equation}
    \momentblock{\gmomentsequence}{1}{2}
=   \begin{matrix}
    &\textcolor{gray}{
    \begin{matrix}
    	1\phantom{11111} & a_1\phantom{11111111} & a_2\phantom{1111111} & a_{11}\phantom{111}
    \end{matrix}
    }\\
	\begin{matrix}  \textcolor{gray}{1} \\ \textcolor{gray}{a_1} \\ \textcolor{gray}{a_2} \\ \textcolor{gray}{a_{11}} \end{matrix} & 
	\begin{pmatrix} 
    1 & 
    \gmoment{1}  & 
    \gmoment{2}  & 
    \gmoment{11} \\
    \gmoment{1}  & 
    \gmoment{2} + 2 \gmoment{11} & 
    \gmoment{3} + 2 \gmoment{21} & 
    2 \, \gmoment{21} + \gmoment{111} \\
    \gmoment{2}  & 
    \gmoment{3} + 2 \gmoment{21} & 
    \gmoment{4} + 2 \gmoment{22} & 
    2 \, \gmoment{31} + \gmoment{211} \\
    \gmoment{11} & 
    2 \, \gmoment{21} + \gmoment{111} & 
    2 \, \gmoment{31} + \gmoment{211} & 
    \gmoment{22} + 2 \gmoment{211} \\
    \end{pmatrix}
	\end{matrix}
\end{equation}
and
\begin{equation}
    \momentblock{\gmomentsequence}{2}{2}
=   \begin{matrix}
	&\textcolor{gray}{
		\begin{matrix}
			b_1\phantom{111111111} & b_2\phantom{11111111} & b_{11}\phantom{1}\\
		\end{matrix}
	}\\
	\begin{matrix}  \textcolor{gray}{b_1} \\ \textcolor{gray}{b_1} \\ \textcolor{gray}{b_{11}} \end{matrix} & 
	\begin{pmatrix} 
    2\gmoment{2}-2\gmoment{11} & 2\gmoment{3}-2\gmoment{21} & 2\gmoment{21}-2\gmoment{111} \\
    2\gmoment{3}-2\gmoment{21} & 2\gmoment{4}-2\gmoment{22} & 2\gmoment{31}-2\gmoment{211} \\
    2\gmoment{21}-2\gmoment{111} & 2\gmoment{31}-2\gmoment{211} & 2\gmoment{22}-2\gmoment{211} \\
    \end{pmatrix}
	\end{matrix} .
\end{equation}
For 
\begin{equation}
    f
=   1+x_1+x_2+x_3+(x_1+x_2+x_3)^2+x_1^4+x_2^4+x_3^4 
=   1+a_1+a_2+2\,a_{11}+a_4 
\end{equation}
we obtain
\begin{align}
    \mathcal{B}_2
&=  \supp(f) \cup 
    \bigcup\limits_{w\in\mathcal{S}^{(1)}_2}
    \supp\left(\reynolds(w^2)\right) \cup 
    \bigcup\limits_{w\in\mathcal{S}^{(2)}_2}
    \supp\left(\reynolds(w^2)\right) \\ 
&=  \{ 1, a_1, a_2, a_{11}, a_4\} \cup 
    \{ 1, a_2, a_{11}, a_4, a_{22}, a_{211} \} \cup 
    \{ a_2, a_{11}, a_4, a_{22}, a_{211} \} 
\end{align}
and set $\tspsupport{1}{2,0}=\tspsupport{2}{2,0}=\mathcal{B}$ 
(we omit the index $k$ since there are no constraints). 
In the tsp graph for the sparsity order $s=1$, the nodes $a_1$ and $a_{11}$ are disconnected 
because the support of $\reynolds(a_1\,a_{11}) = 2\,a_{21} + a_{111}$ has an empty intersection with $\tspsupport{1}{r,0}$. 
But since $a_1$ is connected to $a_2$ and $a_2$ is connected to $a_{11}$, 
$a_1$ and $a_{11}$ are connected in the chordal extension. 
Similarly, $b_1$ is connected neither to $b_2$ nor to $b_{11}$. 
Hence, the companion matrices of the tsp graphs for $s=1$ are the symmetric binary matrices
\begin{equation}
    \pretspmatrix{1}{2,1}
=   \begin{pmatrix}
    1 & 1 & 1 & 1 \\
    1 & 1 & 1 & \textcolor{red}{0} \\
    1 & 1 & 1 & 1 \\
    1 & \textcolor{red}{0} & 1 & 1 \\
    \end{pmatrix}
    \quad \mbox{with block closure} \quad
    \tspmatrix{1}{2,1}
=   \begin{pmatrix}
    1 & 1 & 1 & 1 \\
    1 & 1 & 1 & \textcolor{red}{1} \\
    1 & 1 & 1 & 1 \\
    1 & \textcolor{red}{1} & 1 & 1 \\
    \end{pmatrix}
\end{equation}
and
\begin{equation}
    \pretspmatrix{2}{2,1}
=   \tspmatrix{2}{r,1}
=   \begin{pmatrix}
    1 & 0 & 0 \\
    0 & 1 & 1 \\
    0 & 1 & 1 \\
    \end{pmatrix} . 
\end{equation}
We set
\begin{equation}
    \tspsupport{2}{2,1} 
=   \bigcup\limits_{(\pretspmatrix{2}{r,1})_{v,w}=1} 
    \supp \left( \reynolds(v\,w) \right) 
=   \{ a_{2}, a_{11}, a_4, a_{31}, a_{22}, a_{211} \} . 
\end{equation}
The new element $a_{31}$ does not lead to new entries 
in $\tspmatrix{2}{2}$ and hence we have 
\begin{equation}
    \tspmatrix{1}{2,1}
=   \tspmatrix{1}{2,s}
    \quad \mbox{and} \quad
    \tspmatrix{2}{2,1}
=   \tspmatrix{2}{2,s}
\end{equation}
for all $s\geq 1$. 

\subsection{Generalized TSSOS Hierarchy and Asymptotic Convergence}

The \textbf{symmetry-adapted TSSOS hierarchy for POP of order $r,s$} is 
\begin{equation}\label{eq_symmetric_tssos_hiersarchy}
    f^{r,s}_{\mathrm{mom}}
=   \begin{array}[t]{rl}
    \inf        &   \sum\limits_j f_j\,\gmoment{j} \\
    \mbox{s.t.} &   \gmomentsequence\in\R^{\gmultiplicity{1}{2r}},\,
                    \gmoment{1} = 1,\\
                &   j\notin \bigcup\limits_{k,i} \tspsupport{i}{r,s,k}
                    \Rightarrow \gmoment{j}=0,\\
                &   \forall \, k,i: \, 
                    \sum\limits_j \sdpmatrix{i}{r,s,k,j}\,\gmoment{j}
                    \succeq 0
    \end{array}
    \quad \mbox{and} \quad
    f^{r,s}_{\mathrm{sos}}
=   \begin{array}[t]{rl}
    \sup        &   f_1-
                    \sum\limits_{k,i}
                    \trace(\sdpmatrix{i}{r,s,k,1}\cdot\sosmatrix{i}{k}) \\
    \mbox{s.t.} &   \sosmatrix{i}{k}\in\sdpcone{i}{r-d_k}
                    (\tspmatrix{i}{r,s,k}),\\
                &   \sosmatrix{i}{k}\succeq 0,
                    \forall\,j \in \bigcup\limits_{k,i} \tspsupport{i}{r,s,k}: \\
                &   f_j=
                    \sum\limits_{k,i}
                    \trace(\sdpmatrix{i}{r,s,k,j}\cdot\sosmatrix{i}{k}) ,
    \end{array}
\end{equation}
where $\sdpmatrix{i}{r,s,k,j}$ is the Hadamard product 
$\tspmatrix{i}{r,s,k} \circ \sdpmatrix{i}{r,k,j}$. 
By construction, it follows that 
$\supp(\pretspmatrix{i}{r,s,k}) \subseteq 
\supp(\pretspmatrix{i}{r,s+1,k})$ and hence
the sequence of binary matrices 
$(\tspmatrix{i}{r,s,k})_{s\geq 1}$ stabilizes. 
The stabilized matrices are denoted by 
$\tspmatrix{i}{r,*,k}$. 



\begin{theorem}\label{thm_sparsity_convergence_1}
Assume that the block closure in \Cref{defi_tsp_matrix} is maximal. 
For fixed $r\geq r_{\min}$, the sequences 
$(f^{r,s}_{\mathrm{mom}})_{s\geq 1}$ and 
$(f^{r,s}_{\mathrm{sos}})_{s\geq 1}$ are 
monotonously nondecreasing with 
$f^{r,*}_{\mathrm{mom}}=f^r_{\mathrm{mom}}$ and 
$f^{r,*}_{\mathrm{sos}}=f^r_{\mathrm{sos}}$. 
\end{theorem}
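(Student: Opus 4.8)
The plan is to read off, for a fixed relaxation order $r$, the two sparse semidefinite programs of \Cref{eq_symmetric_tssos_hiersarchy} and the two dense ones of \Cref{thm_symm_moment_sos_hierarchy_SDP}, and to compare their feasible sets directly, transferring between the moment and the SOS sides by conic duality. Two assertions must be proved: the monotonicity of $(f^{r,s}_{\mathrm{mom}})_{s\ge 1}$ and $(f^{r,s}_{\mathrm{sos}})_{s\ge 1}$, and the coincidence of the stabilized bounds with the dense bounds. The backbone of both is the chain of inclusions $\supp(\pretspmatrix{i}{r,s,k})\subseteq\supp(\pretspmatrix{i}{r,s+1,k})$ recorded after \Cref{eq_symmetric_tssos_hiersarchy}; since the block closure of \Cref{defi_tsp_matrix} is inclusion-monotone, this upgrades to $\supp(\tspmatrix{i}{r,s,k})\subseteq\supp(\tspmatrix{i}{r,s+1,k})$ and $\tspsupport{i}{r,s,k}\subseteq\tspsupport{i}{r,s+1,k}$, and the stabilization of these nested finite sets is automatic. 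I record once the fact I use repeatedly: if $\sosmatrix{i}{k}$ is supported on $\tspmatrix{i}{r,s,k}$, then Hadamard masking is inert, $\trace(\sdpmatrix{i}{r,s,k,j}\,\sosmatrix{i}{k})=\trace(\sdpmatrix{i}{r,k,j}\,\sosmatrix{i}{k})$ for every $j$, because the mask only erases entries of $\sdpmatrix{i}{r,k,j}$ where $\sosmatrix{i}{k}$ already vanishes.

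For monotonicity I would lift a feasible point at order $s$ to one at order $s+1$ on the SOS side, keeping the objective. A feasible $(\sosmatrix{i}{k})$ for $f^{r,s}_{\mathrm{sos}}$ is supported on $\tspmatrix{i}{r,s,k}$, hence on the larger $\tspmatrix{i}{r,s+1,k}$, and by inertness of the mask its traces are unchanged, so the only issue is whether the matching identities $f_j=\sum_{k,i}\trace(\sdpmatrix{i}{r,k,j}\,\sosmatrix{i}{k})$, known on $\bigcup_{k,i}\tspsupport{i}{r,s,k}$, persist on the larger index set $\bigcup_{k,i}\tspsupport{i}{r,s+1,k}$. This I would verify by noting that the invariant polynomial $\sum_{k,i}\reynolds(q^{(i)}_k\,g_k)-(f-\lambda)$, with $q^{(i)}_k$ the sum-of-squares polynomial whose Gram matrix is $\sosmatrix{i}{k}$, is supported outside $\bigcup_{k,i}\tspsupport{i}{r,s,k}$, and that every index newly admitted at order $s+1$ already lies in a support $\supp(\reynolds(w^{(i)}_{1,j'}w^{(i)}_{1,j''}g_k))$ attached to a pre-closure edge recorded in $\tspsupport{i}{r,s,k}$, so the new matching conditions are forced. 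Monotonicity of $(f^{r,s}_{\mathrm{mom}})_{s}$ then follows by duality of the sparse SDPs.

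The heart of the proof is the equality at stabilization, and here I would use crucially that the block closure is \emph{maximal}. By the remark after \Cref{defi_tsp_matrix} each $\tspmatrix{i}{r,*,k}$ is block diagonal, its index set partitioned into the connected components (cliques) of the stabilized tsp graph; a symmetric matrix supported on such a pattern is positive semidefinite exactly when each clique block is, so the sparse cone $\sdpcone{i}{r-d_k}(\tspmatrix{i}{r,*,k})\cap\{\succeq 0\}$ is the set of block-diagonal positive semidefinite matrices and, dually, the constraint $\tspmatrix{i}{r,*,k}\circ\momentblock{g_k*\gmomentsequence}{i}{r-d_k}\succeq 0$ splits into one positive-semidefiniteness condition per clique. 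The passage dense $\to$ sparse is then the restriction of a dense Gram (resp.\ moment) matrix to its block-diagonal part: principal submatrices of positive semidefinite matrices stay positive semidefinite, and the self-consistency of the stabilized support — a non-edge $(\pretspmatrix{i}{r,*,k})_{j',j''}=0$ forces $\supp(\reynolds(w^{(i)}_{1,j'}w^{(i)}_{1,j''}g_k))\cap\bigcup_{k'}\tspsupport{i}{r,*,k'}=\emptyset$ — is meant to ensure that the objective coefficient $j=1$ and all on-support matching conditions survive the restriction. The reverse passage sparse $\to$ dense is where the chordal positive-semidefinite completion theorem enters: a family of clique-wise positive semidefinite blocks completes to a globally positive semidefinite matrix, and the disjointness just quoted places the completing entries outside $\bigcup_{k,i}\tspsupport{i}{r,*,k}$, where they neither touch the objective nor any recorded coefficient.

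I expect the genuine obstacle to sit in this last equivalence, in the bookkeeping of the off-support (block-closure) entries. The delicacy is that the supports in \Cref{defi_tsp_matrix} are built component by component — the inner union runs over the constraint index $k'$ for a \emph{fixed} isotypic index $i$ — whereas the pseudomoments $\gmoment{j}$ are shared by all components; an entry that is a non-edge for block $i$ can still be pinned by a pseudomoment recorded in $\tspsupport{i'}{r,*,k'}$ for some $i'\ne i$, so setting $\gmoment{j}=0$ outside $\bigcup_{k,i}\tspsupport{i}{r,*,k}$ must be shown harmless for all blocks simultaneously. Arranging the restriction and the completion so that they respect this cross-component coupling, i.e.\ so that the block-diagonalization stays compatible with the single global pseudomoment vector, is the step I would develop most carefully; the remaining ingredients — inertness of the mask, heredity of positivity under principal submatrices, and the exactness of the clique decomposition for maximal chordality — are standard.
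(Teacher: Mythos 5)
Your proposal follows the same route as the paper's proof: monotonicity and one of the two inequalities are the easy part, and the substance is to show that a point feasible for the stabilized sparse problem extends by zero to a point feasible for the dense problem with the same objective value (then dualize for the SOS side). But your write-up stops short of proving exactly the step on which the theorem turns. You correctly isolate it --- whether setting $\gmoment{j}=0$ for $j\notin\bigcup_{k,i}\tspsupport{i}{r,*,k}$ is ``harmless for all blocks simultaneously'' --- and then declare it ``the step I would develop most carefully'' without developing it. That is the entire content of the equality $f^{r,*}_{\mathrm{mom}}=f^{r}_{\mathrm{mom}}$: one must show that for every non-edge $(j,j')$ of $\tspmatrix{i}{r,*,k}$ the corresponding entry of the dense localizing matrix of the zero-extension $\overline{\gmomentsequence}$ vanishes, so that $\momentblock{g_k*\overline{\gmomentsequence}}{i}{r}=\tspmatrix{i}{r,*,k}\circ\momentblock{g_k*\gmomentsequence}{i}{r}\succeq 0$. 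The paper closes this with a stabilization argument: since one further support-extension step applied to the stabilized pattern creates no new edges, a non-edge of $\tspmatrix{i}{r,*,k}$ is in particular a non-edge of the pre-closure matrix at the next iteration, and by \Cref{defi_tsp_matrix} this means precisely that $\supp\left(\reynolds(w^{(i)}_{1,j}\,w^{(i)}_{1,j'}\,g_k)\right)$ has empty intersection with the stabilized support set; on edges, the dense and masked entries agree automatically because $\overline{\gmomentsequence}$ and $\gmomentsequence$ coincide wherever either is allowed to be nonzero. Without this argument (or a substitute for it), your completion step is unsupported, and the proposal does not yet prove the theorem.

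Two further remarks. First, your cross-component worry is a legitimate reading of \Cref{defi_tsp_matrix}: the support-extension test for component $i$ involves only $\bigcup_{k'}\tspsupport{i}{r,s-1,k'}$, while the vanishing needed in the proof is against the support of $\overline{\gmomentsequence}$, which is a union over all components, and the paper's proof asserts disjointness from the still larger set $\mathcal{B}_{r,*}$; so you have put your finger on a genuine point of imprecision, but flagging it is not resolving it, and resolving it (in whichever way one sets up the bookkeeping) is exactly the missing piece. Second, two of your tools are heavier than necessary: under maximal block closure the masked matrix is block diagonal, so the chordal positive-semidefinite completion theorem degenerates to the trivial fact that the zero completion of a block-diagonal PSD matrix is PSD --- this is why the paper can work with the zero-extension directly; and your monotonicity detour through SOS-side lifting plus duality is more work than the paper's treatment (which takes monotonicity as immediate from the nestedness of the patterns) and itself contains an unproven claim, namely that the matching conditions at indices newly admitted at order $s+1$ are ``forced'', which would require the same kind of support bookkeeping as the main step.
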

\begin{proof}
Monotonicity and $f^{r,*}_{\mathrm{mom}} \leq f^r_{\mathrm{mom}}$ are clear from the construction. 
Let
\[
    \mathcal{B}_{r,*}
\coloneqq  \bigcup\limits_{k=0}^\ell
    \bigcup\limits_{i=1}^\irreps
    \bigcup\limits_{(\tspmatrix{i}{r,*,k})_{j,j'}=1}
    \supp\left(\reynolds(
    w^{(i)}_{1,j}\,w^{(i)}_{1,j'}\,g_k)\right).
\]
Assume that $\gmomentsequence=(\gmoment{j})_{j\in\mathcal{B}_{r,*}}$ 
is feasible for $f^{r,*}_{\mathrm{mom}}$ and set
\[
    \mathbf{\overline{y}}_j
\coloneqq  \begin{cases}
    \gmoment{j},    &   j\in\mathcal{B}_{r,*} , \\
    0,              &   j\in\{1,\ldots,\gmultiplicity{1}{2r}\}
                        \setminus\mathcal{B}_{r,*} .
    \end{cases}
\]
Since $\tspmatrix{i}{*,k}$ is stable under maximal block closure, 
we have 
\[
    \supp\left(\reynolds(
    w^{(i)}_{1,j}\,w^{(i)}_{1,j'}\,g_k)\right) 
    \cap \mathcal{B}_{r,*}
=   \emptyset
\]
whenever $(\tspmatrix{i}{r,*,k})_{j,j'}=0$ and thus 
$\momentblock{g_k*\overline{\gmomentsequence}}{i}{r} = 
\tspmatrix{i}{r,*,k} \circ \momentblock{g_k*\gmomentsequence}{i}{r}$. 
Hence, $\mathbf{\overline{y}}$ is feasible for $f^r_{\mathrm{mom}}$ with 
$\sum_j f_j\,\gmoment{j} = \sum_j f_j\,\mathbf{\overline{y}}_j \geq f^r_{\mathrm{mom}}$. 
Since $\gmomentsequence$ was arbitrary, we also have $f^{r,*}_{\mathrm{mom}} \geq f^r_{\mathrm{mom}}$. 
Dualizing yields the same for $f^{r,*}_{\mathrm{sos}}$. 
\end{proof}

\begin{theorem}\label{thm_sparsity_convergence_2}
Assume that the block closure in \Cref{defi_tsp_matrix} is maximal. 
For fixed sparsity order $s\geq 1$, the sequences 
$(f^{r,s}_{\mathrm{mom}})_{r\geq r_{\min}}$ and 
$(f^{r,s}_{\mathrm{sos}})_{r\geq r_{\min}}$ are 
monotonously nondecreasing. 
\end{theorem}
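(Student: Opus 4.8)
The plan is to prove monotonicity in $r$ exactly as for the dense hierarchy—extending feasible SOS certificates upward and restricting feasible pseudo-moment sequences downward—once the behaviour of the sparsity data under increasing $r$ (with $s$ fixed) is understood. The structural input is a \emph{nestedness lemma}: for all $j',j''\leq\gmultiplicity{i}{r-d_k}$ one has $(\tspmatrix{i}{r,s,k})_{j',j''}\leq(\tspmatrix{i}{r+1,s,k})_{j',j''}$, and $\tspsupport{i}{r,s,k}\subseteq\tspsupport{i}{r+1,s,k}$. I would prove this by induction on $s$. The base case $s=0$ is $\mathcal{B}_r\subseteq\mathcal{B}_{r+1}$, since every diagonal square recorded at order $r$ reappears verbatim at order $r+1$. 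In the inductive step the support-extension rule is monotone in its input support set, and the maximal block closure is monotone under the addition of vertices: two low-indexed vertices lying in a single connected component at order $r$ still lie in one component of the larger order-$(r+1)$ graph, so every edge created by the closure at order $r$ is also created at order $r+1$. Maximality is essential here, as a non-maximal chordal extension of a subgraph need not embed into one of the ambient graph.

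For the SOS bounds I would take a feasible triple $(\lambda,(\sosmatrix{i}{k}))$ at order $r$ and pad each Gram matrix $\sosmatrix{i}{k}$ with zero rows and columns up to size $\gmultiplicity{i}{r+1-d_k}$. Positive semidefiniteness is preserved, and by the nestedness lemma the support of the padded matrix still lies in $\tspmatrix{i}{r+1,s,k}$. Since the certifying sums of squares $q^{(i)}_k\in\soscone{i}{r-d_k}$ are unchanged, the polynomial identity $f-\lambda=\sum_{k,i}\reynolds(q^{(i)}_k g_k)$ is unaffected, so every coefficient-matching constraint at order $r+1$—including those indexed by the newly activated elements of $\bigcup_{k,i}\tspsupport{i}{r+1,s,k}$—holds and the objective $\lambda$ is retained. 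Hence $f^{r,s}_{\mathrm{sos}}\leq f^{r+1,s}_{\mathrm{sos}}$.

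For the moment bounds I would restrict a feasible $\gmomentsequence$ at order $r+1$ to a sequence $\overline{\gmomentsequence}$ that keeps $\gmoment{j}$ for $j\in\bigcup_{k,i}\tspsupport{i}{r,s,k}$—all such $j$ satisfy $j\leq\gmultiplicity{1}{2r}$ by a degree count—and zeros the remaining entries. The normalization $\gmoment{1}=1$ and the support constraint are immediate, and the objective is unchanged because $\supp(f)\subseteq\mathcal{B}_r\subseteq\bigcup_{k,i}\tspsupport{i}{r,s,k}$. It remains to verify $\momentblock{g_k*\overline{\gmomentsequence}}{i}{r}\succeq0$, for which I would show that this block is the top-left principal submatrix of $\momentblock{g_k*\gmomentsequence}{i}{r+1}$ and hence inherits positive semidefiniteness. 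The coefficients $(\sdpmatrix{i}{r,k,j})_{j',j''}$ are independent of $r$ and vanish once $\deg(w^{(1)}_{1,j})>2r$, so on the top-left block only moments with $j\leq\gmultiplicity{1}{2r}$ enter.

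The step I expect to be the main obstacle is precisely this entry-matching, and it is where the whole argument rests on maximality. A priori the order-$(r+1)$ support may activate an index $j\leq\gmultiplicity{1}{2r}$ that lies outside $\bigcup_{k,i}\tspsupport{i}{r,s,k}$ and that $\overline{\gmomentsequence}$ has zeroed; such an index perturbs a block entry unless it never appears in $\supp(\reynolds(w^{(i)}_{1,j'}w^{(i)}_{1,j''}g_k))$ for any position $(j',j'')$ of $\tspmatrix{i}{r,s,k}$. For positions present before the closure this is immediate from the definition of $\tspsupport{i}{r,s,k}$; for positions created by the block closure it is exactly the stability property of the maximal closure already used in \Cref{thm_sparsity_convergence_1}, now required at finite $s$ rather than in the stabilized limit—namely that no closure edge contributes support outside $\bigcup_{k,i}\tspsupport{i}{r,s,k}$. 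The same property underwrites the soundness of the relaxation and the exactness of the identity invoked in the SOS step. Granting it, zeroing the off-support moments leaves every block entry untouched, the principal-submatrix identity holds, and $f^{r,s}_{\mathrm{mom}}\leq f^{r+1,s}_{\mathrm{mom}}$ follows, completing the proof.
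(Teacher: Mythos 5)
Your core lemma is, in substance, the same device as the paper's proof: both arguments proceed by induction on $s$ to compare the tsp data at orders $r$ and $r+1$, and then transport feasible points between the two SDPs. But note the direction. You prove $(\tspmatrix{i}{r,s,k})_{j,j'}=1\Rightarrow(\dynamicaltspmatrix{i}{r,s,k})_{j,j'}=1$ (order $r$ inside order $r+1$ on the common indices), whereas the paper asserts the converse implication $(\dynamicaltspmatrix{i}{r,s,k})_{j,j'}=1\Rightarrow(\tspmatrix{i}{r,s,k})_{j,j'}=1$. Your direction is the correct and usable one: the order-$(r+1)$ graph has additional vertices, and two low-degree vertices can become connected through a path passing through new vertices, so connectivity at order $r+1$ need not imply connectivity at order $r$, i.e.\ the paper's implication does not survive the block closure; moreover, it is your inclusion, not the paper's, that the zero-padding of Gram matrices (SOS side) and the principal-submatrix argument (moment side) actually require. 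Your remark that maximality of the chordal extension is what makes the closure monotone is also exactly the right point. Finally, you carry out the transport of certificates and pseudo-moments explicitly, which the paper's proof leaves entirely implicit, so in that respect your write-up is more complete than the paper's.

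The step you ``grant'', however, is a genuine gap, and under \Cref{defi_tsp_matrix} as literally written it is false. There, $\tspsupport{i}{r,s,k}$ is the union of the sets $\supp(\reynolds(w^{(i)}_{1,j}\,w^{(i)}_{1,j'}\,g_k))$ over edges of the \emph{pre-closure} matrix $\pretspmatrix{i}{r,s,k}$ only, so nothing forces a closure-added edge to have its support absorbed. The paper's own $\symmetricgroup{3}$ example in \Cref{section_3_symmetry_and_sparsity} exhibits the failure at $s=1$: the closure adds the edge $(a_1,a_{11})$, whose support is $\{a_{21},a_{111}\}$, yet $a_{111}$ lies in no $\tspsupport{i}{2,1,k}$. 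Consequently, once a higher relaxation order activates the moment $\gmoment{111}$, zeroing the off-support moments changes the $(a_1,a_{11})$ entry from $2\gmoment{21}+\gmoment{111}$ to $2\gmoment{21}$, your principal-submatrix identity breaks, and for the same reason the coefficient-matching identity in your SOS step is no longer exact at closure edges. The repair is not to prove the granted property but to adopt the convention of the original TSSOS framework, which is also what the paper's proof of \Cref{thm_sparsity_convergence_1} implicitly uses when it defines $\mathcal{B}_{r,*}$ through the edges of the closed matrices $\tspmatrix{i}{r,*,k}$: define $\tspsupport{i}{r,s,k}$ as the union over edges of $\tspmatrix{i}{r,s,k}$ rather than of $\pretspmatrix{i}{r,s,k}$. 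With that reading, ``no closure edge contributes support outside $\bigcup_{k,i}\tspsupport{i}{r,s,k}$'' holds by definition, your nestedness induction goes through unchanged, and both your moment and SOS steps close; without it, neither your proof nor the paper's is complete. Make this dependence explicit instead of granting it.
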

\begin{proof}
Denote the tsp matrix for the order $r$ by $\tspmatrix{i}{r,s,k}$ 
and for $r+1$ by $\mathbf{C}^{(i)}_{r,s,k}$. 
We show that $(\mathbf{C}^{(i)}_{r,s,k})_{j,j'}=1$ implies $(\tspmatrix{i}{r,s,k})_{j,j'}=1$ by induction on $s$. 
By construction $(\mathbf{\tilde{C}}^{(i)}_{r,1,k})_{j,j'}=1$ implies 
$(\pretspmatrix{i}{r,1,k})_{j,j'}=1$ and thus 
$(\mathbf{C}^{(i)}_{r,1,k})_{j,j'}=1$ implies $(\mathbf{B}^{(i)}_{r,1,k})_{j,j'}=1$ for all 
$1\leq i\leq \irreps,\,1\leq k\leq \ell$. 
Assume that $(\mathbf{C}^{(i)}_{r,s,k})_{j,j'}=1$ implies 
$(\tspmatrix{i}{r,s,k})_{j,j'}=1$ for some $s\geq 1$. 
Then $\tspsupport{i}{r,s,k} \subseteq \mathcal{C}^{(i)}_{r,s,k}$ 
and thus, by construction, 
$(\mathbf{{C}}^{(i)}_{r,s+1,k})_{j,j'}=1$ implies 
$(\tspmatrix{i}{r,s+1,k})_{j,j'}=1$. 
\end{proof}


\begin{remark}
Assume that $K$ has nonempty interior. 
For fixed $r\geq r_{\min}$ and $s\geq 1$, 
there is no duality gap, that is, 
$f^{r,s}_{\mathrm{mom}} - f^{r,s}_{\mathrm{sos}} = 0$. 
This can be proven analogously to \cite[Thm. 4.2]{lasserre01}. 
Similarly, if there is an explicit ball constraint among the $g_k$, 
then the duality gap also vanishes; see \cite{josz2016strong}. 
\end{remark}


\begin{example}
The Robinson polynomial 
\[
    f 
\coloneqq  x_1^6 + x_2^6 - x_1^4\,x_2^2 - x_2^4\,x_1^2 - x_1^4 - x_2^4 - x_1^2 - x_2^2 + 3\,x_1^2\,x_2^2 + 1
\]
is invariant under the $\mathfrak{S}_2$-permutation of $x_1$ and $x_2$, 
and the corresponding symmetry-adapted basis consists of 
\[
    \mathcal{S}^{(1)}_3
=   \{1,\,x_1\,x_2,x_1+x_2,x_1^2+x_2^2,x_1^3+x_2^3,x_1\,x_2^2+x_1^2\,x_2\}
\]
and
\[
    \mathcal{S}^{(2)}_3
=   \{x_1-x_2,x_1^2-x_2^2,x_1^3-x_2^3,x_1\,x_2^2-x_1^2\,x_2\}. 
\]
The first term sparsity iteration ($s=1$) gives the block structure 
\[
    \tspsupport{1}{3,1}
=   \{1,\,x_1^2+x_2^2\}\cup\{x_1\,x_2\}\cup\{x_1+x_2,\,x_1^3+x_2^3,\,x_1\,x_2^2+x_1^2\,x_2\}
\]
and
\[
    \tspsupport{2}{3,1}
=   \{x_1-x_2,\,x_1^3-x_2^3,\,x_1\,x_2^2-x_1^2\,x_2\}\cup\{x_1^2-x_2^2\}
\]
with a lower bound of $-0.9338$. 
The second iteration ($s=2$) gives the block structure 
\[
    \tspsupport{1}{3,2}
=   \{1,\,x_1\,x_2,\,x_1^2+x_2^2\}\cup\{x_1+x_2,\,x_1^3+x_2^3,\,x_1\,x_2^2+x_1^2\,x_2\}
\]
and
\[
    \tspsupport{2}{3,2}
=   \{x_1-x_2,\,x_1^3-x_2^3,\,x_1\,x_2^2-x_1^2\,x_2\}\cup\{x_1^2-x_2^2\}
\]
with the (numerically) same lower bound of $-0.9338$. 
Note that the block structure of the first term sparsity iteration is also compatible with the sign symmetry of $f$. 
\end{example}




\section{Benchmarks}
\label{section_4_benchmarks}

In this section, we compare with existing techniques. 
We consider three classes of examples, quartics with different symmetries and constraints. 
Polynomial optimization problems are modeled using the Julia \cite{bezanson2017julia} package {\tt TSSOS} \cite{tssossoft} (see also \cite[Appendix B]{magron2023sparse}) and the solver {\tt MOSEK} \cite{andersen2000MOSEK} was used to solve SDP relaxations. 
We rely on the {Julia} package \href{https://github.com/kalmarek/SymbolicWedderburn.jl?tab=readme-ov-file}{\tt SymbolicWedderburn.jl} to compute symmetry-adapted bases; see also \cite{Kaluba19}. 
All experiments were performed on a computer with an 
AMD Ryzen AI 7 PRO 360 w/ Radeon 880M CPU @2.00 GHz, 
8 cores, 16 threads, and 64GB of RAM. 
All results can be reproduced with online scripts\footnote{\url{https://github.com/wangjie212/TSSOS/tree/master/example}}. 

\subsection{1-D Ring Ising Quartic with Dihedral Symmetry}

The dihedral group $\mathfrak{D}_{2n}$ of order $2n$ 
is generated by a cycle $\rho$ of length $n$ and a reflection $\sigma$, 
whose action on the indeterminates $\var{i},\,1\leq i\leq n,$ is defined by
\begin{equation}
    \rho(\var{i}) = \var{i+1}
    \quad \mbox{and} \quad
    \sigma(\var{i}) = \var{-i} 
    \quad (\mbox{indices mod } n). 
\end{equation}
For parameters $a,b,c,d\in\R$, consider the quartic 
\begin{equation}
    f
\coloneqq  \sum_{i=1}^n 
    a\,\var{i}^4 + b\,\var{i}^2\,\var{i+1}^2 + c\,\var{i}\,\var{i+1} + d\,\var{i}^2
\end{equation}
over the Laplacian ball $K \coloneqq \{X\in\R^n\,\vert\,g(X) \geq 0\}$ 
with $g\coloneqq n-\sum_{i=1}^n (\var{i} - \var{i+1})^2$. 

\subsection{2-D Torus Grid Quartic with Product Cyclic Symmetry}

The product of cyclic groups $\mathfrak{C}_{p}\times \mathfrak{C}_q$ 
of order $n\coloneqq p q$ is generated by commuting cycles $\rho_1$ of length $p$ 
and $\rho_2$ of length $q$, 
whose action on the indeterminates 
$\var{i,j},\,1\leq i\leq p,\,1\leq j\leq q,$ is defined by 
\begin{equation}
    \rho_1(\var{i,j}) = \var{i+1,j} 
    \quad (\mbox{mod } p)
    \quad \mbox{and} \quad
    \rho_2(\var{i,j}) = \var{i,j+1} 
    \quad (\mbox{mod } q). 
\end{equation}
For parameters $a,b,c,d$, consider the quartic 
\begin{equation}
    f
\coloneqq  \sum_{i=1}^p \sum_{j=1}^q
    a\,\var{i,j}^4 + 
    b\,(\var{i,j}^2\,\var{i+1,j}^2 +\var{i,j}^2\,\var{i,j+1}^2) + 
    c\,(\var{i,j}\,\var{i+1,j} +\var{i,j}\,\var{i,j+1}) + 
    d\,\var{i,j}^2 
\end{equation}
over $K \coloneqq \{X\in\R^{p \times q}\,\vert\,h(X) = 0\}$ 
with $h\coloneqq\sum_{i=1}^n \sum_{j=1}^n (\var{i,j}^2-1)^2$, 
that is, $K=\{\pm 1\}^{p \times q}$. 

\subsection{Symmetric Quartic}

The symmetric group $\mathfrak{S}_n$ of order $n!$ is generated by the transpositions $(i,i+1)$, $1\leq i\leq n-1$, and acts by permutation of variables $\var{i}$. 
The quartic 
\begin{equation}
    f   
\coloneqq  \frac{1}{n} \sum\limits_{i=1}^n \var{i}^4 
+   \frac{1}{\binom{n}{4}} \sum\limits_{i<j<k<l} \var{i}\,\var{j}\,\var{k}\,\var{l} 
+   \frac{1}{\binom{n}{3}} \sum\limits_{i<j<k} \var{i}\,\var{j}\,\var{k} 
+   \frac{1}{n} \sum_{i=1}^n \var{i}  
\end{equation}
is $\mathfrak{S}_n$-invariant; see \cite{JohannesThesis}. 

\subsection{Discussion of the Numerical Experiments}

The scatter plots in \Cref{fig_1D_ring,fig_2D_torus,fig_daniel_vs_pop}, 
allow a quantitative comparison of our approach with other existing methods, 
including the dense Lasserre hierarchy, 
denoted by ``\textcolor{black}{$\times$}'', 
from \cite{lasserre01}, 
the hierarchy exploiting sign symmetry, denoted by 
``\textcolor{violet}{\scalebox{1}{$+$}}'', 
the corresponding initial term sparsity hierarchy, denoted by  
``\textcolor{black}{$\blacksquare$}'', 
from \cite{magron21}, as well as 
the symmetry reduction, denoted by 
``\textcolor{teal}{\scalebox{1.5}{$\ast$}}'', 
from \cite{riener2013exploiting}. 

For our method, 
we can choose to include (``\textcolor{red}{\scalebox{1.5}{$\bullet$}}'')  
or discard (``\textcolor{blue}{\scalebox{1.1}{$\blacklozenge$}}'') 
the diagonal squares
\begin{equation}
    \bigcup\limits_{i=1}^{\irreps}
    \bigcup\limits_{j=1}^{\gmultiplicity{1}{r}}
    \supp\left(\reynolds((w^{(i)}_{1,j})^2)\right) 
\end{equation}
when we define $\mathcal{B}_r$ in \Cref{defi_tsp_matrix}. 
Furthermore, when we approximate the block closure/chordal extension, 
we obtain the corresponding SDPs, denoted by 
``\textcolor{black}{$\square$}'', 
``\textcolor{red}{\scalebox{1.5}{$\circ$}}'' and 
``\textcolor{blue}{\scalebox{1.1}{$\lozenge$}}'', respectively. 

In case of the $1$-D ring Ising quartic in \Cref{fig_1D_ring}, 
the best lower bounds are obtained with relaxation order $r=2$ for every method. 
It suffices to consider sparsity order $s=1$ for the methods which exploit sparsity 
(``\textcolor{black}{$\blacksquare$}'', ``\textcolor{red}{\scalebox{1.5}{$\bullet$}}'', ``\textcolor{blue}{\scalebox{1.1}{$\blacklozenge$}}''). 
In terms of maximal SDP block size and number of SDP constraints, 
the least efficient method is the dense one ``\textcolor{black}{$\times$}''. 
The most efficient one on the other hand is our combination ``\textcolor{blue}{\scalebox{1.1}{$\blacklozenge$}}'' of $\mathfrak{D}_{2n}$-symmetry exploitation with term sparsity 
(maximal chordal extension, no diagonal squares). 
Remarkably, the maximal block size seems to stabilize at size $5$. 
Meanwhile, the number of SDP constraints still grows exponentially and is only slightly better than pure symmetry exploitation ``\textcolor{teal}{\scalebox{1.5}{$\ast$}}'' and including diagonal squares ``\textcolor{red}{\scalebox{1.5}{$\bullet$}}''. 

The computation time is to be taken with a grain of salt. 
We indicate the total time, 
which includes the times to compute a symmetry-adapted basis, to compute the block structure, to assemble the SDP and to solve the SDP. 
This strongly depends on the implementation and the machine used for computation, 
but we observe that ``\textcolor{blue}{\scalebox{1.1}{$\blacklozenge$}}'' is also here the winner for increasing the number of variables $n$ and the relaxation order $r$. 
For the dense method ``\textcolor{black}{$\times$}'', 
solving the SDP is the bottleneck, 
while for the methods with symmetry 
(``\textcolor{teal}{\scalebox{1.5}{$\ast$}}'', 
``\textcolor{red}{\scalebox{1.5}{$\circ$}}'', 
``\textcolor{blue}{\scalebox{1.1}{$\lozenge$}}''), 
computing the basis and assembling the SDP takes the most amount of time. 

In case of the $2$-D torus grid quartic in \Cref{fig_2D_torus}, 
the best lower bounds are again obtained with relaxation order $r=2$ for every method. 
However, it is necessary to increase the sparsity order $s\geq 2$ for the methods which exploit sparsity 
(``\textcolor{black}{$\blacksquare$}'', ``\textcolor{red}{\scalebox{1.5}{$\bullet$}}'', ``\textcolor{blue}{\scalebox{1.1}{$\blacklozenge$}}''). 
Again, we remark that the maximal block size stabilizes for the method 
``\textcolor{blue}{\scalebox{1.1}{$\blacklozenge$}}'', 
which is also here the most efficient. 

The final \Cref{fig_daniel_vs_pop} is meant to give a direct comparison with the symmetric chordal extension method ``\textcolor{coral}{\scalebox{1.5}{$\ast$}}'' from \cite{JohannesThesis}. 
While the order of the group $\mathfrak{D}_{2n}$, respectively $\mathfrak{C}_p \times \mathfrak{C}_q$, 
is linear in $n$, respectively $pq$, 
the order of the symmetric group $\mathbf{S}_n$ grows factorially with $n$. 
This makes computing a symmetry-adapted basis and applying the Reynolds operator challenging for large $n$
(for $n=10$, the order is $\vert\mathfrak{S}_{10}\vert=3,628,800$ and there are $h=42$ irreducible representations of dimension up to $768$). 
Tailored algorithms are available \cite{JamesKerber1984}, but we did not apply them here. 
For the case $n=6$, we find similar results as \cite{JohannesThesis}. 
The difference between 
(``\textcolor{coral}{\scalebox{1.5}{$\ast$}}'', $r=2,\,s=2$) 
and
(``\textcolor{red}{\scalebox{1.5}{$\circ$}}'', $r=2,\,s=1$)
is one block of size $1$ in favor of the latter, see \Cref{table_daniel_vs_pop_n6_r2}. 
However, the symmetric chordal extension ``\textcolor{red}{\scalebox{1.5}{$\circ$}}'' appears to give a usable lower bound for $s=1$, 
while the SDPs originating from approximating the maximal chordal extension ``\textcolor{red}{\scalebox{1.5}{$\circ$}}'' and ``\textcolor{blue}{\scalebox{1.1}{$\lozenge$}}'' 
have status ``infeasible'' or ``slow progress'' at the first level of sparsity. 

\clearpage

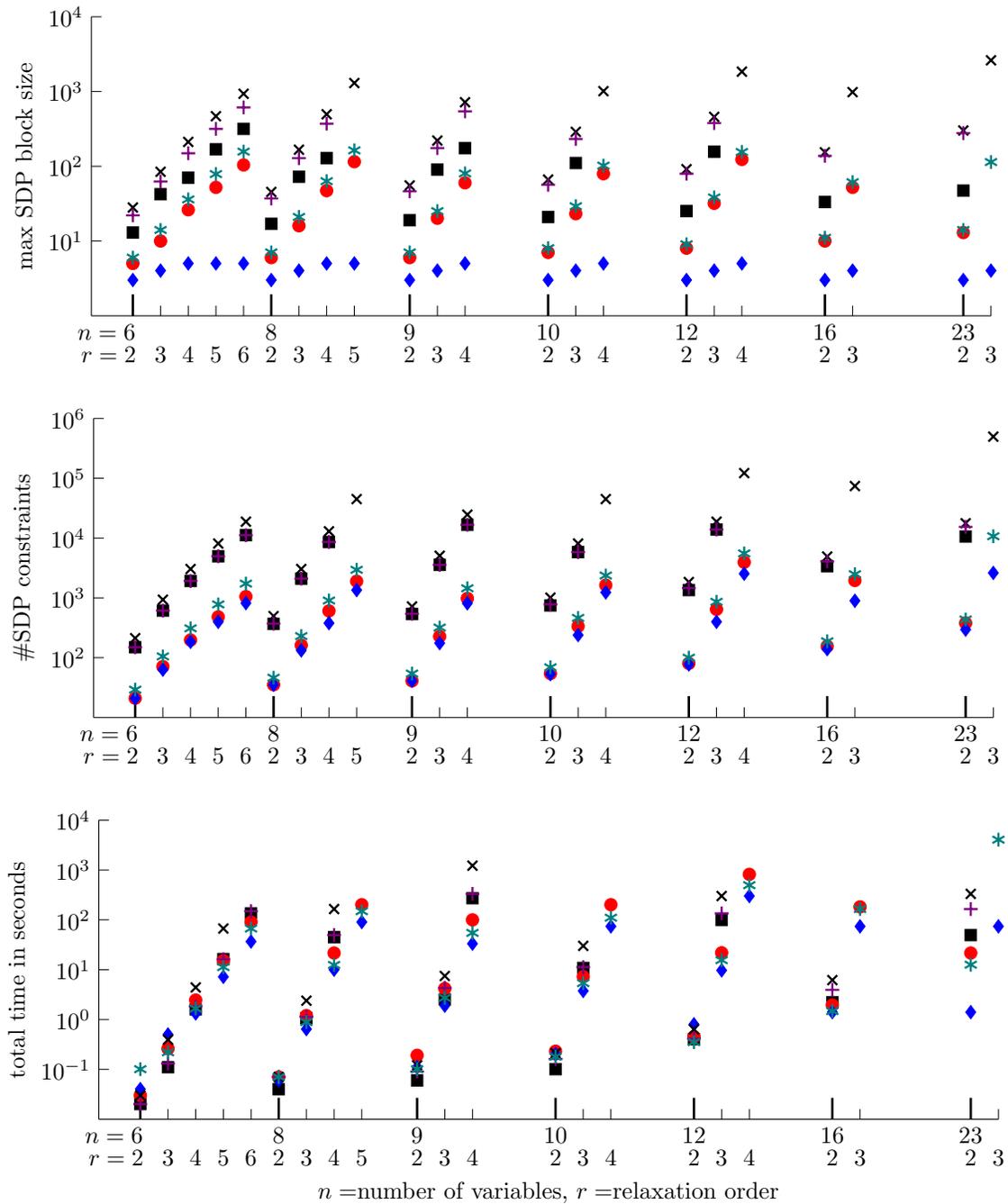
\begin{figure}

\begin{center}
    
\begin{tikzpicture}
\begin{axis}[
			width=0.9\textwidth, height=6cm,
            ylabel={max SDP block size},
			xmin=0.7, xmax=7.2, ymin=0.0, ymax=9.20,
			xtick={1,1.2,1.4,1.6,1.8,2,2.2,2.4,2.6,3,3.2,3.4,4,4.2,4.4,5,5.2,5.4,6,6.2,7,7.2},
            xticklabels={
                {\shortstack{$n=6\phantom{=n..}$\\$r=2\phantom{=r..}$}},
                {\shortstack{$\phantom{1}$\\$3$}},
                {\shortstack{$\phantom{1}$\\$4$}},
                {\shortstack{$\phantom{1}$\\$5$}},
                {\shortstack{$\phantom{1}$\\$6$}},
                {\shortstack{$ 8$\\$2$}},
                {\shortstack{$\phantom{1}$\\$3$}},
                {\shortstack{$\phantom{1}$\\$4$}},
                {\shortstack{$\phantom{1}$\\$5$}},
                {\shortstack{$ 9$\\$2$}},
                {\shortstack{$\phantom{1}$\\$3$}},
                {\shortstack{$\phantom{1}$\\$4$}},
                {\shortstack{$10$\\$2$}},
                {\shortstack{$\phantom{1}$\\$3$}},
                {\shortstack{$\phantom{1}$\\$4$}},
                {\shortstack{$12$\\$2$}},
                {\shortstack{$\phantom{1}$\\$3$}},
                {\shortstack{$\phantom{1}$\\$4$}},
                {\shortstack{$16$\\$2$}},
                {\shortstack{$\phantom{1}$\\$3$}},
                {\shortstack{$23$\\$2$}},
                {\shortstack{$\phantom{1}$\\$3$}}
            },
            extra x ticks={1.00,2.00,3.00,4.00,5.00,6.00,7.00},
            extra x tick labels={$\phantom{2}$,$\phantom{2}$,$\phantom{2}$,$\phantom{2}$,$\phantom{2}$,$\phantom{2}$,$\phantom{2}$},
            extra x tick style={
                major tick length=9pt,
                tick style={line width=1pt},
                xticklabel style={yshift=-2ex, font=\small, color=black}
            },
			ytick={2.30,4.60,6.90,9.20,11.5,13.8,16.1},
            yticklabels={$10^1$, $10^2$, $10^3$, $10^4$, $10^5$, $10^6$, $10^7$},
			axis x line*=bottom,
			axis y line*=left,
			enlargelimits=false,
			legend cell align=left,
			legend pos=north west,
			tick style={black},
			]
			
			\addplot[
			only marks, mark=square*, mark size=2.4pt,
			draw=black, fill=black
			] coordinates {
				(1.0, 2.56) (1.2, 3.74) (1.4, 4.25) (1.6, 5.12) (1.8, 5.75) (2.0, 2.83) (2.2, 4.28) (2.4, 4.85) (3.0, 2.94) (3.2, 4.50) (3.4, 5.16) (4.0, 3.04) (4.2, 4.70) (5.0, 3.22) (5.2, 5.05) (6.0, 3.50) (7.0, 3.85)
			};
			
			\addplot[
			only marks, mark=*, mark size=2.6pt,
			draw=red, fill=red
			] coordinates {
				(1.0, 1.61) (1.2, 2.30) (1.4, 3.26) (1.6, 3.95) (1.8, 4.64) (2.0, 1.79) (2.2, 2.77) (2.4, 3.85) (2.6, 4.74) (3.0, 1.79) (3.2, 3.00) (3.4, 4.09) (4.0, 1.95) (4.2, 3.14) (4.4, 4.37) (5.0, 2.08) (5.2, 3.46) (5.4, 4.81) (6.0, 2.30) (6.2, 3.95) (7.0, 2.56)
			};
			
			\addplot[
			only marks, mark=diamond*, mark size=2.8pt,
			draw=blue, fill=blue
			] coordinates {
				(1.0, 1.10) (1.2, 1.39) (1.4, 1.61) (1.6, 1.61) (1.8, 1.61) (2.0, 1.10) (2.2, 1.39) (2.4, 1.61) (2.6, 1.61) (3.0, 1.10) (3.2, 1.39) (3.4, 1.61) (4.0, 1.10) (4.2, 1.39) (4.4, 1.61) (5.0, 1.10) (5.2, 1.39) (5.4, 1.61) (6.0, 1.10) (6.2, 1.39) (7.0, 1.10) (7.2, 1.39)
			};
			
			\addplot[
			only marks, mark=x, mark size=2.9pt,
			draw=black, line width=0.9pt
			] coordinates {
				(1.0, 3.33) (1.2, 4.43) (1.4, 5.35) (1.6, 6.14) (1.8, 6.83) (2.0, 3.81) (2.2, 5.11) (2.4, 6.20) (2.6, 7.16) (3.0, 4.01) (3.2, 5.39) (3.4, 6.57) (4.0, 4.19) (4.2, 5.66) (4.4, 6.91) (5.0, 4.51) (5.2, 6.12) (5.4, 7.51) (6.0, 5.03) (6.2, 6.88) (7.0, 5.70) (7.2, 7.86)
			};
			
			\addplot[
			only marks, mark=+, mark size=2.9pt,
			draw=violet, line width=0.9pt
			] coordinates {
				(1.0, 3.09) (1.2, 4.13) (1.4, 5.00) (1.6, 5.75) (1.8, 6.41) (2.0, 3.61) (2.2, 4.85) (2.4, 5.91) (3.0, 3.83) (3.2, 5.16) (3.4, 6.29) (4.0, 4.03) (4.2, 5.44) (5.0, 4.37) (5.2, 5.93) (6.0, 4.92) (7.0, 5.62)
			};
			
			\addplot[
			only marks, mark=asterisk, mark size=2.9pt,
			teal, line width=0.9pt
			] coordinates {
				(1.0, 1.79) (1.2, 2.64) (1.4, 3.58) (1.6, 4.36) (1.8, 5.05) (2.0, 1.95) (2.2, 3.04) (2.4, 4.14) (2.6, 5.09) (3.0, 1.95) (3.2, 3.22) (3.4, 4.38) (4.0, 2.08) (4.2, 3.37) (4.4, 4.62) (5.0, 2.20) (5.2, 3.64) (5.4, 5.04) (6.0, 2.40) (6.2, 4.11) (7.0, 2.64) (7.2, 4.73)
			};
		\end{axis}
\end{tikzpicture}

~

\begin{tikzpicture}
\begin{axis}[
			width=0.9\textwidth, height=6cm,
            ylabel={$\#$SDP constraints},
			xmin=0.7, xmax=7.2, ymin=2.30, ymax=13.8,
			xtick={1,1.2,1.4,1.6,1.8,2,2.2,2.4,2.6,3,3.2,3.4,4,4.2,4.4,5,5.2,5.4,6,6.2,7,7.2},
            xticklabels={
                {\shortstack{$n=6\phantom{=n..}$\\$r=2\phantom{=r..}$}},
                {\shortstack{$\phantom{1}$\\$3$}},
                {\shortstack{$\phantom{1}$\\$4$}},
                {\shortstack{$\phantom{1}$\\$5$}},
                {\shortstack{$\phantom{1}$\\$6$}},
                {\shortstack{$ 8$\\$2$}},
                {\shortstack{$\phantom{1}$\\$3$}},
                {\shortstack{$\phantom{1}$\\$4$}},
                {\shortstack{$\phantom{1}$\\$5$}},
                {\shortstack{$ 9$\\$2$}},
                {\shortstack{$\phantom{1}$\\$3$}},
                {\shortstack{$\phantom{1}$\\$4$}},
                {\shortstack{$10$\\$2$}},
                {\shortstack{$\phantom{1}$\\$3$}},
                {\shortstack{$\phantom{1}$\\$4$}},
                {\shortstack{$12$\\$2$}},
                {\shortstack{$\phantom{1}$\\$3$}},
                {\shortstack{$\phantom{1}$\\$4$}},
                {\shortstack{$16$\\$2$}},
                {\shortstack{$\phantom{1}$\\$3$}},
                {\shortstack{$23$\\$2$}},
                {\shortstack{$\phantom{1}$\\$3$}}
            },
            extra x ticks={1.00,2.00,3.00,4.00,5.00,6.00,7.00},
            extra x tick labels={$\phantom{2}$,$\phantom{2}$,$\phantom{2}$,$\phantom{2}$,$\phantom{2}$,$\phantom{2}$,$\phantom{2}$},
            extra x tick style={
                major tick length=9pt,
                tick style={line width=1pt},
                xticklabel style={yshift=-2ex, font=\small, color=black}
            },
			ytick={4.60,6.90,9.20,11.5,13.8,16.1},
            yticklabels={$10^2$, $10^3$, $10^4$, $10^5$, $10^6$, $10^7$},
			axis x line*=bottom,
			axis y line*=left,
			enlargelimits=false,
			legend cell align=left,
			legend pos=north west,
			tick style={black},
			]
			
			\addplot[
			only marks, mark=square*, mark size=2.4pt,
			draw=black, fill=black
			] coordinates {
				(1.0, 5.00) (1.2, 6.41) (1.4, 7.55) (1.6, 8.50) (1.8, 9.31) (2.0, 5.90) (2.2, 7.64) (2.4, 9.05) (3.0, 6.28) (3.2, 8.17) (3.4, 9.71) (4.0, 6.61) (4.2, 8.66) (5.0, 7.20) (5.2, 9.53) (6.0, 8.12) (7.0, 9.26)
			};
			
			\addplot[
			only marks, mark=*, mark size=2.6pt,
			draw=red, fill=red
			] coordinates {
				(1.0, 3.04) (1.2, 4.26) (1.4, 5.28) (1.6, 6.17) (1.8, 6.95) (2.0, 3.56) (2.2, 5.08) (2.4, 6.40) (2.6, 7.54) (3.0, 3.71) (3.2, 5.42) (3.4, 6.88) (4.0, 3.99) (4.2, 5.81) (4.4, 7.39) (5.0, 4.38) (5.2, 6.46) (5.4, 8.27) (6.0, 5.04) (6.2, 7.57) (7.0, 5.93)
			};
			
			\addplot[
			only marks, mark=diamond*, mark size=2.8pt,
			draw=blue, fill=blue
			] coordinates {
				(1.0, 3.04) (1.2, 4.14) (1.4, 5.21) (1.6, 5.98) (1.8, 6.70) (2.0, 3.56) (2.2, 4.88) (2.4, 5.93) (2.6, 7.20) (3.0, 3.71) (3.2, 5.16) (3.4, 6.69) (4.0, 3.97) (4.2, 5.47) (4.4, 7.11) (5.0, 4.34) (5.2, 5.98) (5.4, 7.83) (6.0, 4.93) (6.2, 6.79) (7.0, 5.68) (7.2, 7.86)
			};
			
			\addplot[
			only marks, mark=x, mark size=2.9pt,
			draw=black, line width=0.9pt
			] coordinates {
				(1.0, 5.35) (1.2, 6.83) (1.4, 8.01) (1.6, 8.99) (1.8, 9.83) (2.0, 6.20) (2.2, 8.01) (2.4, 9.46) (2.6, 10.7) (3.0, 6.57) (3.2, 8.52) (3.4, 10.1) (4.0, 6.91) (4.2, 8.99) (4.4, 10.7) (5.0, 7.51) (5.2, 9.83) (5.4, 11.7) (6.0, 8.49) (6.2, 11.2) (7.0, 9.77) (7.2, 13.1)
			};
			
			\addplot[
			only marks, mark=+, mark size=2.9pt,
			draw=violet, line width=0.9pt
			] coordinates {
				(1.0, 5.00) (1.2, 6.41) (1.4, 7.55) (1.6, 8.50) (1.8, 9.31) (2.0, 5.91) (2.2, 7.64) (2.4, 9.05) (3.0, 6.29) (3.2, 8.17) (3.4, 9.71) (4.0, 6.65) (4.2, 8.66) (5.0, 7.28) (5.2, 9.53) (6.0, 8.30) (7.0, 9.63)
			};
			
			\addplot[
			only marks, mark=asterisk, mark size=2.9pt,
			teal, line width=0.9pt
			] coordinates {
				(1.0, 3.37) (1.2, 4.65) (1.4, 5.73) (1.6, 6.65) (1.8, 7.45) (2.0, 3.83) (2.2, 5.43) (2.4, 6.80) (2.6, 7.98) (3.0, 3.99) (3.2, 5.76) (3.4, 7.27) (4.0, 4.23) (4.2, 6.13) (4.4, 7.76) (5.0, 4.60) (5.2, 6.75) (5.4, 8.61) (6.0, 5.23) (6.2, 7.81) (7.0, 6.07) (7.2, 9.27)
			};
		\end{axis}
\end{tikzpicture}

~

\begin{tikzpicture}
\begin{axis}[
			width=0.9\textwidth, height=6cm,
            xlabel={$n=$number of variables, $r=$relaxation order},
            ylabel={total time in seconds},
			xmin=0.7, xmax=7.2, ymin=2.30, ymax=16.10,
			xtick={1,1.2,1.4,1.6,1.8,2,2.2,2.4,2.6,3,3.2,3.4,4,4.2,4.4,5,5.2,5.4,6,6.2,7,7.2},
            xticklabels={
                {\shortstack{$n=6\phantom{=n..}$\\$r=2\phantom{=r..}$}},
                {\shortstack{$\phantom{1}$\\$3$}},
                {\shortstack{$\phantom{1}$\\$4$}},
                {\shortstack{$\phantom{1}$\\$5$}},
                {\shortstack{$\phantom{1}$\\$6$}},
                {\shortstack{$ 8$\\$2$}},
                {\shortstack{$\phantom{1}$\\$3$}},
                {\shortstack{$\phantom{1}$\\$4$}},
                {\shortstack{$\phantom{1}$\\$5$}},
                {\shortstack{$ 9$\\$2$}},
                {\shortstack{$\phantom{1}$\\$3$}},
                {\shortstack{$\phantom{1}$\\$4$}},
                {\shortstack{$10$\\$2$}},
                {\shortstack{$\phantom{1}$\\$3$}},
                {\shortstack{$\phantom{1}$\\$4$}},
                {\shortstack{$12$\\$2$}},
                {\shortstack{$\phantom{1}$\\$3$}},
                {\shortstack{$\phantom{1}$\\$4$}},
                {\shortstack{$16$\\$2$}},
                {\shortstack{$\phantom{1}$\\$3$}},
                {\shortstack{$23$\\$2$}},
                {\shortstack{$\phantom{1}$\\$3$}}
            },
            extra x ticks={1.00,2.00,3.00,4.00,5.00,6.00,7.00},
            extra x tick labels={$\phantom{2}$,$\phantom{2}$,$\phantom{2}$,$\phantom{2}$,$\phantom{2}$,$\phantom{2}$,$\phantom{2}$},
            extra x tick style={
                major tick length=9pt,
                tick style={line width=1pt},
                xticklabel style={yshift=-2ex, font=\small, color=black}
            },
			ytick={4.60,6.90,9.20,11.5,13.8,16.1},
            yticklabels={$10^{-1}$, $10^0$, $10^1$, $10^2$, $10^3$, $10^4$},
			axis x line*=bottom,
			axis y line*=left,
			enlargelimits=false,
			legend cell align=left,
			legend pos=north west,
			tick style={black},
			]
			
			\addplot[
			only marks, mark=square*, mark size=2.4pt,
			draw=black, fill=black
			] coordinates {
				(1.0, 3.00) (1.2, 4.70) (1.4, 7.37) (1.6, 9.69) (1.8, 11.8) (2.0, 3.69) (2.2, 6.96) (2.4, 10.7) (3.0, 4.09) (3.2, 7.83) (3.4, 12.5) (4.0, 4.61) (4.2, 9.28) (5.0, 5.99) (5.2, 11.5) (6.0, 7.70) (7.0, 10.8)
			};
			
			\addplot[
			only marks, mark=*, mark size=2.6pt,
			draw=red, fill=red
			] coordinates {
				(1.0, 3.40) (1.2, 5.56) (1.4, 7.80) (1.6, 9.62) (1.8, 11.4) (2.0, 4.25) (2.2, 7.08) (2.4, 9.97) (2.6, 12.2) (3.0, 5.25) (3.2, 8.33) (3.4, 11.5) (4.0, 5.44) (4.2, 8.88) (4.4, 12.2) (5.0, 6.11) (5.2, 9.98) (5.4, 13.6) (6.0, 7.57) (6.2, 12.1) (7.0, 9.97)
			};
			
			\addplot[
			only marks, mark=diamond*, mark size=2.8pt,
			draw=blue, fill=blue
			] coordinates {
				(1.0, 3.69) (1.2, 6.21) (1.4, 7.18) (1.6, 8.87) (1.8, 10.5) (2.0, 4.09) (2.2, 6.46) (2.4, 9.20) (2.6, 11.4) (3.0, 4.70) (3.2, 7.54) (3.4, 10.4) (4.0, 5.30) (4.2, 8.22) (4.4, 11.2) (5.0, 6.68) (5.2, 9.17) (5.4, 12.6) (6.0, 7.24) (6.2, 11.2) (7.0, 7.24) (7.2, 11.2)
			};
			
			\addplot[
			only marks, mark=x, mark size=2.9pt,
			draw=black, line width=0.9pt
			] coordinates {
				(1.0, 3.40) (1.2, 5.97) (1.4, 8.38) (1.6, 11.1) (2.0, 4.25) (2.2, 7.77) (2.4, 12.0) (3.0, 4.79) (3.2, 8.91) (3.4, 14.0) (4.0, 5.35) (4.2, 10.3) (5.0, 6.45) (5.2, 12.6) (6.0, 8.72) (7.0, 12.7)
			};
			
			\addplot[
			only marks, mark=+, mark size=2.9pt,
			draw=violet, line width=0.9pt
			] coordinates {
				(1.0, 3.00) (1.2, 4.87) (1.4, 7.51) (1.6, 9.68) (1.8, 11.9) (2.0, 4.25) (2.2, 7.02) (2.4, 10.8) (3.0, 4.50) (3.2, 8.36) (3.4, 12.7) (4.0, 5.08) (4.2, 9.32) (5.0, 6.02) (5.2, 11.8) (6.0, 8.27) (7.0, 12.0)
			};
			
			\addplot[
			only marks, mark=asterisk, mark size=2.9pt,
			teal, line width=0.9pt
			] coordinates {
				(1.0, 4.61) (1.2, 5.39) (1.4, 7.41) (1.6, 9.32) (1.8, 11.1) (2.0, 4.25) (2.2, 6.78) (2.4, 9.42) (2.6, 11.9) (3.0, 4.61) (3.2, 7.90) (3.4, 10.9) (4.0, 5.19) (4.2, 8.58) (4.4, 11.6) (5.0, 5.86) (5.2, 9.64) (5.4, 13.1) (6.0, 7.29) (6.2, 12.0) (7.0, 9.43) (7.2, 15.2)
			};
		\end{axis}
\end{tikzpicture}
    
\end{center}
\caption{Maximal SDP block sizes, number of SDP constraints and total computation time versus $(n,\,r)$ for the $1$-D ring Ising quartic with dihedral symmetry.}
\label{fig_1D_ring}

\textcolor{black}{$\times$ 
dense \cite{lasserre01}}\\
\textcolor{violet}{\scalebox{1}{$+$} 
sign symmetry}\\
\textcolor{teal}{\scalebox{1.5}{$\ast$} 
$\mathfrak{D}_{2n}$-symmetry \cite{riener2013exploiting}}\\
\textcolor{black}{$\blacksquare$ 
term sparsity (maximal chordal extension, diagonal squares) \cite{magron21}}\\
\textcolor{red}{\scalebox{1.5}{$\bullet$}
$\mathfrak{D}_{2n}$-symmetry + term sparsity (maximal chordal extension, diagonal squares)}\\
\textcolor{blue}{\scalebox{1.1}{$\blacklozenge$}
$\mathfrak{D}_{2n}$-symmetry + term sparsity (maximal chordal extension, no diagonal squares)}

\end{figure}

\begin{figure}
\begin{center}
    
\begin{tikzpicture}
\begin{axis}[
			width=0.9\textwidth, height=6cm,
			xmin=0.95, xmax=2.40,
			xtick={
            1.00,1.05,
            1.15,1.20,
            1.30,1.35,
            1.45,1.50,1.55,
            1.60,1.65,1.70,
            1.75,1.80,1.85,
            2.00,2.05,
            2.15,2.20,
            2.30,2.35,2.40
            },
            xticklabels={
            {\shortstack{$p,q=2,3\phantom{=p,q.}$\\$r=2\phantom{=r.}$\\$s=1\phantom{=s.}$}},
            {\shortstack{$\phantom{2,3}$\\$\phantom{2}$\\$2$}},
            {\shortstack{$\phantom{2,3}$\\$         3 $\\$1$}},
            {\shortstack{$\phantom{2,3}$\\$\phantom{3}$\\$2$}},
            {\shortstack{$\phantom{2,3}$\\$         4 $\\$1$}},
            {\shortstack{$\phantom{2,3}$\\$\phantom{4}$\\$2$}},
            {\shortstack{$\phantom{2,3}$\\$         5 $\\$1$}},
            {\shortstack{$\phantom{2,3}$\\$\phantom{5}$\\$2$}},
            {\shortstack{$\phantom{2,3}$\\$\phantom{5}$\\$3$}},
            {\shortstack{$\phantom{2,3}$\\$         6 $\\$1$}},
            {\shortstack{$\phantom{2,3}$\\$\phantom{6}$\\$2$}},
            {\shortstack{$\phantom{2,3}$\\$\phantom{6}$\\$3$}},
            {\shortstack{$\phantom{2,3}$\\$         7 $\\$1$}},
            {\shortstack{$\phantom{2,3}$\\$\phantom{7}$\\$2$}},
            {\shortstack{$\phantom{2,3}$\\$\phantom{7}$\\$3$}},
            {\shortstack{$         3,4 $\\$         2 $\\$1$}},
            {\shortstack{$\phantom{3,4}$\\$\phantom{2}$\\$2$}},
            {\shortstack{$\phantom{3,4}$\\$         3 $\\$1$}},
            {\shortstack{$\phantom{2,3}$\\$\phantom{3}$\\$2$}},
            {\shortstack{$\phantom{3,4}$\\$         4 $\\$1$}},
            {\shortstack{$\phantom{2,3}$\\$\phantom{4}$\\$2$}},
            {\shortstack{$\phantom{2,3}$\\$\phantom{4}$\\$3$}}
            },
            extra x ticks={1.00,2.00},
            extra x tick labels={$\phantom{2}$,$\phantom{2}$},
            extra x tick style={
                major tick length=9pt,
                tick style={line width=1pt},
                xticklabel style={yshift=-2ex, font=\small, color=black}
            },
            ylabel={max SDP block size},
            ymin=0.0, ymax=9.2, 
			ytick={2.30,4.60,6.90,9.20,11.5,13.8,16.1},
            yticklabels={$10^1$, $10^2$, $10^3$, $10^4$, $10^5$, $10^6$, $10^7$},
			axis x line*=bottom,
			axis y line*=left,
			enlargelimits=false,
			legend cell align=left,
			legend pos=north west,
			tick style={black},
			]
			
			\addplot[
			only marks, mark=square*, mark size=2.4pt,
			draw=black, fill=black
			] coordinates {
				(1.00, 2.77) (1.05, 3.09) (1.15, 3.74) (1.20, 4.13) (1.30, 4.51) (1.35, 5.00) (1.45, 5.12) (1.50, 5.75) (1.55, 5.75) (1.60, 5.82) (1.65, 6.41) (1.70, 6.41) (2.00, 3.61) (2.05, 4.37) (2.15, 5.05) (2.20, 5.93)
			};
			
			\addplot[
			only marks, mark=*, mark size=2.6pt,
			draw=red, fill=red
			] coordinates {
				(1.00, 1.61) (1.05, 2.08) (1.15, 3.00) (1.20, 3.00) (1.30, 3.95) (1.45, 4.64) (1.50, 4.64) (1.60, 5.34) (1.65, 5.34) (1.75, 5.91) (1.80, 5.91) (2.00, 2.08) (2.05, 2.64) (2.15, 4.16) (2.20, 4.16) (2.30, 5.51) (2.35, 5.51)
			};
			
			\addplot[
			only marks, mark=diamond*, mark size=2.8pt,
			draw=blue, fill=blue
			] coordinates {
				(1.00, 1.39) (1.05, 2.08) (1.15, 2.30) (1.20, 3.00) (1.30, 2.30) (1.35, 3.95) (1.45, 2.30) (1.50, 4.64) (1.55, 4.64) (1.60, 2.30) (1.65, 5.34) (1.70, 5.34) (1.75, 2.30) (1.80, 5.91) (1.85, 5.91) (2.00, 1.39) (2.05, 2.64) (2.15, 2.48) (2.20, 4.16) (2.30, 2.48) (2.35, 5.51) (2.40, 5.51)
			};
			
			\addplot[
			only marks, mark=x, mark size=2.9pt,
			draw=black, line width=0.9pt
			] coordinates {
				(1.00, 3.33) (1.15, 4.43) (1.30, 5.35) (1.45, 6.14) (1.60, 6.83) (1.75, 7.45) (2.00, 4.51) (2.15, 6.12) (2.30, 7.51)
			};
			
			\addplot[
			only marks, mark=+, mark size=2.9pt,
			draw=violet, line width=0.9pt
			] coordinates {
				(1.00, 3.09) (1.15, 4.13) (1.30, 5.00) (1.45, 5.75) (1.60, 6.41) (2.00, 4.37) (2.15, 5.93)
			};
			
			\addplot[
			only marks, mark=asterisk, mark size=2.9pt,
			teal, line width=0.9pt
			] coordinates {
				(1.00, 2.30) (1.15, 3.33) (1.30, 4.28) (1.45, 5.05) (1.60, 5.74) (1.75, 6.36) (2.00, 2.77) (2.15, 4.33) (2.30, 5.73)
			};
		\end{axis}
\end{tikzpicture}

~

\begin{tikzpicture}
\begin{axis}[
			width=0.9\textwidth, height=6cm,
			xmin=0.95, xmax=2.40,
			xtick={
            1.00,1.05,
            1.15,1.20,
            1.30,1.35,
            1.45,1.50,1.55,
            1.60,1.65,1.70,
            1.75,1.80,1.85,
            2.00,2.05,
            2.15,2.20,
            2.30,2.35,2.40
            },
            xticklabels={
            {\shortstack{$p,q=2,3\phantom{=p,q.}$\\$r=2\phantom{=r.}$\\$s=1\phantom{=s.}$}},
            {\shortstack{$\phantom{2,3}$\\$\phantom{2}$\\$2$}},
            {\shortstack{$\phantom{2,3}$\\$         3 $\\$1$}},
            {\shortstack{$\phantom{2,3}$\\$\phantom{3}$\\$2$}},
            {\shortstack{$\phantom{2,3}$\\$         4 $\\$1$}},
            {\shortstack{$\phantom{2,3}$\\$\phantom{4}$\\$2$}},
            {\shortstack{$\phantom{2,3}$\\$         5 $\\$1$}},
            {\shortstack{$\phantom{2,3}$\\$\phantom{5}$\\$2$}},
            {\shortstack{$\phantom{2,3}$\\$\phantom{5}$\\$3$}},
            {\shortstack{$\phantom{2,3}$\\$         6 $\\$1$}},
            {\shortstack{$\phantom{2,3}$\\$\phantom{6}$\\$2$}},
            {\shortstack{$\phantom{2,3}$\\$\phantom{6}$\\$3$}},
            {\shortstack{$\phantom{2,3}$\\$         7 $\\$1$}},
            {\shortstack{$\phantom{2,3}$\\$\phantom{7}$\\$2$}},
            {\shortstack{$\phantom{2,3}$\\$\phantom{7}$\\$3$}},
            {\shortstack{$         3,4 $\\$         2 $\\$1$}},
            {\shortstack{$\phantom{3,4}$\\$\phantom{2}$\\$2$}},
            {\shortstack{$\phantom{3,4}$\\$         3 $\\$1$}},
            {\shortstack{$\phantom{2,3}$\\$\phantom{3}$\\$2$}},
            {\shortstack{$\phantom{3,4}$\\$         4 $\\$1$}},
            {\shortstack{$\phantom{2,3}$\\$\phantom{4}$\\$2$}},
            {\shortstack{$\phantom{2,3}$\\$\phantom{4}$\\$3$}}
            },
            extra x ticks={1.00,2.00},
            extra x tick labels={$\phantom{2}$,$\phantom{2}$},
            extra x tick style={
                major tick length=9pt,
                tick style={line width=1pt},
                xticklabel style={yshift=-2ex, font=\small, color=black}
            },
            ylabel={$\#$SDP constraints},
            ymin=2.30, ymax=13.8, 
			ytick={4.60,6.90,9.20,11.5,13.8,16.1},
            yticklabels={$10^2$, $10^3$, $10^4$, $10^5$, $10^6$, $10^7$},
			axis x line*=bottom,
			axis y line*=left,
			enlargelimits=false,
			legend cell align=left,
			legend pos=north west,
			tick style={black},
			]
			
			\addplot[
			only marks, mark=square*, mark size=2.4pt,
			draw=black, fill=black
			] coordinates {
				(1.00, 4.82) (1.05, 5.00) (1.15, 6.25) (1.20, 6.41) (1.30, 7.48) (1.35, 7.55) (1.45, 8.30) (1.50, 8.50) (1.55, 8.50) (1.60, 9.28) (1.65, 9.31) (1.70, 9.31) (2.00, 6.54) (2.05, 7.28) (2.15, 8.80) (2.20, 9.53)
			};
			
			\addplot[
			only marks, mark=*, mark size=2.6pt,
			draw=red, fill=red
			] coordinates {
				(1.00, 3.30) (1.05, 3.30) (1.15, 4.67) (1.20, 4.67) (1.30, 5.78) (1.45, 6.72) (1.50, 6.72) (1.60, 7.53) (1.65, 7.53) (1.75, 8.25) (1.80, 8.25) (2.00, 4.82) (2.05, 4.82) (2.15, 7.06) (2.20, 7.06) (2.30, 8.92) (2.35, 8.92)
			};
			
			\addplot[
			only marks, mark=diamond*, mark size=2.8pt,
			draw=blue, fill=blue
			] coordinates {
				(1.00, 3.14) (1.05, 3.30) (1.15, 4.47) (1.20, 4.67) (1.30, 4.96) (1.35, 5.78) (1.45, 5.60) (1.50, 6.72) (1.55, 6.72) (1.60, 6.14) (1.65, 7.53) (1.70, 7.53) (1.75, 6.74) (1.80, 8.25) (1.85, 8.25) (2.00, 4.20) (2.05, 4.82) (2.15, 5.98) (2.20, 7.06) (2.30, 6.91) (2.35, 8.92) (2.40, 8.92)
			};
			
			\addplot[
			only marks, mark=x, mark size=2.9pt,
			draw=black, line width=0.9pt
			] coordinates {
				(1.00, 5.35) (1.15, 6.83) (1.30, 8.01) (1.45, 8.99) (1.60, 9.83) (1.75, 10.6) (2.00, 7.51) (2.15, 9.83) (2.30, 11.7)
			};
			
			\addplot[
			only marks, mark=+, mark size=2.9pt,
			draw=violet, line width=0.9pt
			] coordinates {
				(1.00, 5.00) (1.15, 6.41) (1.30, 7.55) (1.45, 8.50) (1.60, 9.31) (2.00, 7.28) (2.15, 9.53)
			};
			
			\addplot[
			only marks, mark=asterisk, mark size=2.9pt,
			teal, line width=0.9pt
			] coordinates {
				(1.00, 3.64) (1.15, 5.08) (1.30, 6.23) (1.45, 7.21) (1.60, 8.04) (1.75, 8.78) (2.00, 5.05) (2.15, 7.35) (2.30, 9.26)
			};
		\end{axis}
\end{tikzpicture}

~

\begin{tikzpicture}
\begin{axis}[
			width=0.9\textwidth, height=6cm,
			xmin=0.95, xmax=2.40,
            xlabel={$pq=$number of variables, $r=$relaxation order, $s=$sparsity order},
			xtick={
            1.00,1.05,
            1.15,1.20,
            1.30,1.35,
            1.45,1.50,1.55,
            1.60,1.65,1.70,
            1.75,1.80,1.85,
            2.00,2.05,
            2.15,2.20,
            2.30,2.35,2.40
            },
            xticklabels={
            {\shortstack{$p,q=2,3\phantom{=p,q.}$\\$r=2\phantom{=r.}$\\$s=1\phantom{=s.}$}},
            {\shortstack{$\phantom{2,3}$\\$\phantom{2}$\\$2$}},
            {\shortstack{$\phantom{2,3}$\\$         3 $\\$1$}},
            {\shortstack{$\phantom{2,3}$\\$\phantom{3}$\\$2$}},
            {\shortstack{$\phantom{2,3}$\\$         4 $\\$1$}},
            {\shortstack{$\phantom{2,3}$\\$\phantom{4}$\\$2$}},
            {\shortstack{$\phantom{2,3}$\\$         5 $\\$1$}},
            {\shortstack{$\phantom{2,3}$\\$\phantom{5}$\\$2$}},
            {\shortstack{$\phantom{2,3}$\\$\phantom{5}$\\$3$}},
            {\shortstack{$\phantom{2,3}$\\$         6 $\\$1$}},
            {\shortstack{$\phantom{2,3}$\\$\phantom{6}$\\$2$}},
            {\shortstack{$\phantom{2,3}$\\$\phantom{6}$\\$3$}},
            {\shortstack{$\phantom{2,3}$\\$         7 $\\$1$}},
            {\shortstack{$\phantom{2,3}$\\$\phantom{7}$\\$2$}},
            {\shortstack{$\phantom{2,3}$\\$\phantom{7}$\\$3$}},
            {\shortstack{$         3,4 $\\$         2 $\\$1$}},
            {\shortstack{$\phantom{3,4}$\\$\phantom{2}$\\$2$}},
            {\shortstack{$\phantom{3,4}$\\$         3 $\\$1$}},
            {\shortstack{$\phantom{2,3}$\\$\phantom{3}$\\$2$}},
            {\shortstack{$\phantom{3,4}$\\$         4 $\\$1$}},
            {\shortstack{$\phantom{2,3}$\\$\phantom{4}$\\$2$}},
            {\shortstack{$\phantom{2,3}$\\$\phantom{4}$\\$3$}}
            },
            extra x ticks={1.00,2.00},
            extra x tick labels={$\phantom{2}$,$\phantom{2}$},
            extra x tick style={
                major tick length=9pt,
                tick style={line width=1pt},
                xticklabel style={yshift=-2ex, font=\small, color=black}
            },
            ylabel={total time in seconds},
            ymin=2.30, ymax=13.8, 
			ytick={4.60,6.90,9.20,11.5,13.8,16.1},
            yticklabels={$10^{-1}$, $10^0$, $10^1$, $10^2$, $10^3$, $10^4$},
			axis x line*=bottom,
			axis y line*=left,
			enlargelimits=false,
			legend cell align=left,
			legend pos=north west,
			tick style={black},
			]
			
			\addplot[
			only marks, mark=square*, mark size=2.4pt,
			draw=black, fill=black
			] coordinates {
				(1.00, 3.69) (1.05, 5.14) (1.15, 5.67) (1.20, 6.45) (1.30, 7.79) (1.35, 7.91) (1.45, 9.00) (1.50, 10.1) (1.55, 9.70) (1.60, 11.9) (1.65, 12.8) (1.70, 12.0) (2.00, 5.70) (2.05, 7.06) (2.15, 10.9) (2.20, 13.3)
			};
			
			\addplot[
			only marks, mark=*, mark size=2.6pt,
			draw=red, fill=red
			] coordinates {
				(1.00, 4.38) (1.05, 4.25) (1.15, 5.14) (1.20, 5.99) (1.30, 6.84) (1.45, 8.35) (1.50, 8.36) (1.60, 10.1) (1.65, 10.7) (1.75, 12.0) (1.80, 11.8) (2.00, 6.85) (2.05, 5.97) (2.15, 9.20) (2.20, 9.28) (2.30, 12.5) (2.35, 12.8)
			};
			
			\addplot[
			only marks, mark=diamond*, mark size=2.8pt,
			draw=blue, fill=blue
			] coordinates {
				(1.00, 3.00) (1.05, 4.38) (1.15, 4.87) (1.20, 5.08) (1.30, 5.80) (1.35, 6.96) (1.45, 7.03) (1.50, 8.58) (1.55, 8.47) (1.60, 8.37) (1.65, 10.3) (1.70, 10.7) (1.75, 9.78) (1.80, 12.0) (1.85, 11.9) (2.00, 5.39) (2.05, 5.99) (2.15, 8.11) (2.20, 9.15) (2.30, 10.8) (2.35, 13.0) (2.40, 12.9)
			};
			
			\addplot[
			only marks, mark=x, mark size=2.9pt,
			draw=black, line width=0.9pt
			] coordinates {
				(1.00, 5.97) (1.15, 6.62) (1.30, 9.40) (1.45, 12.3) (2.00, 8.25) (2.15, 13.7)
			};
			
			\addplot[
			only marks, mark=+, mark size=2.9pt,
			draw=violet, line width=0.9pt
			] coordinates {
				(1.00, 5.19) (1.15, 6.33) (1.30, 7.92) (1.45, 9.67) (1.60, 11.7) (2.00, 7.37) (2.15, 13.2)
			};
			
			\addplot[
			only marks, mark=asterisk, mark size=2.9pt,
			teal, line width=0.9pt
			] coordinates {
				(1.00, 3.91) (1.15, 5.35) (1.30, 6.95) (1.45, 8.89) (1.60, 11.3) (1.75, 12.9) (2.00, 6.04) (2.15, 9.38) (2.30, 13.2)
			};
		\end{axis}
\end{tikzpicture}
    
\end{center}
\caption{Maximal SDP block sizes, number of SDP constraints and total computation time versus $(p,\,q,\,r,\,s)$ for the $2$-D torus grid quartic with product cyclic symmetry.}
\label{fig_2D_torus}

\textcolor{black}{$\times$ 
dense \cite{lasserre01}}\\
\textcolor{violet}{\scalebox{1}{$+$} 
sign symmetry}\\
\textcolor{teal}{\scalebox{1.5}{$\ast$} 
$\mathfrak{C}_{p}\times\mathfrak{C}_{q}$-symmetry \cite{riener2013exploiting}}\\
\textcolor{black}{$\blacksquare$ 
term sparsity (maximal chordal extension, diagonal squares) \cite{magron21}}\\ 
\textcolor{red}{\scalebox{1.5}{$\bullet$}
$\mathfrak{C}_{p}\times\mathfrak{C}_{q}$-symmetry + term sparsity (maximal chordal extension, diagonal squares)}\\
\textcolor{blue}{\scalebox{1.1}{$\blacklozenge$}
$\mathfrak{C}_{p}\times\mathfrak{C}_{q}$-symmetry + term sparsity (maximal chordal extension, no diagonal squares)}

\end{figure}
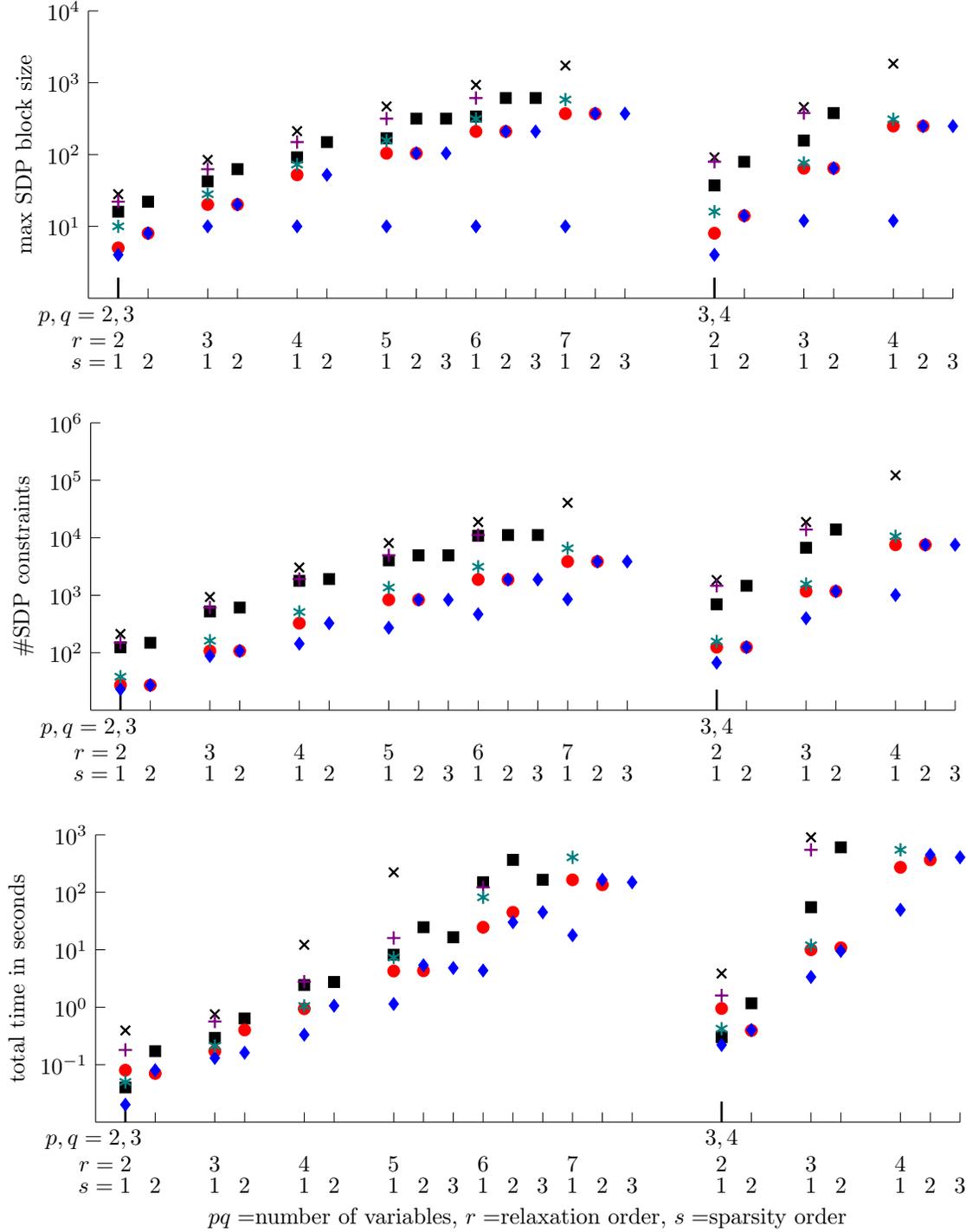

\begin{figure}
\begin{center}
    
\begin{tikzpicture}
\begin{axis}[
			width=0.9\textwidth, height=6cm,
			xmin=0.95, xmax=3.11,
            xlabel={$n=$number of variables, $r=$relaxation order, $s=$sparsity order},
			xtick={
            1.00,1.11,
            1.33,1.44,
            1.66,1.77,
            2.00,2.11,2.22,
            2.33,2.44,
            2.66,2.77,
            3.00,3.11
            },
            xticklabels={
            {\shortstack{$n=           5\phantom{=n}$\\$r= 2\phantom{=r}$\\$s=1\phantom{=s}$}},
                {\shortstack{$\phantom{5}$\\$\phantom{     2}$\\$             2$}},
            {\shortstack{$    \phantom{5}           $\\$   3            $\\$  1$}},
                {\shortstack{$\phantom{5}$\\$\phantom{     3}$\\$             2$}},
            {\shortstack{$    \phantom{5}           $\\$   4            $\\$  1$}},
                {\shortstack{$\phantom{5}$\\$\phantom{     4}$\\$             2$}},
            {\shortstack{$             6            $\\$   2            $\\$  1$}},
                {\shortstack{$\phantom{6}$\\$\phantom{     2}$\\$             2$}},
                {\shortstack{$\phantom{6}$\\$\phantom{     2}$\\$             3$}},
            {\shortstack{$    \phantom{6}\phantom{=n}$\\$  3            $\\$  1$}},
                {\shortstack{$\phantom{6}$\\$\phantom{     3}$\\$             2$}},
            {\shortstack{$    \phantom{6}            $\\$  4            $\\$  1$}},
                {\shortstack{$\phantom{6}$\\$\phantom{     4}$\\$             2$}},
            {\shortstack{$             7            $\\$   2            $\\$  1$}},
                {\shortstack{$\phantom{7}$\\$\phantom{     2}$\\$             2$}},
            },
            extra x ticks={1.00,2.00,3.00},
            extra x tick labels={$\phantom{2}$,$\phantom{2}$,$\phantom{2}$},
            extra x tick style={
                major tick length=9pt,
                tick style={line width=1pt},
                xticklabel style={yshift=-2ex, font=\small, color=black}
            },
            ylabel={max SDP block size},
            ymin=0.0, ymax=6.9, 
			ytick={2.30,4.60,6.90,9.20,11.5,13.8,16.1},
            yticklabels={$10^1$, $10^2$, $10^3$, $10^4$, $10^5$, $10^6$, $10^7$},
			axis x line*=bottom,
			axis y line*=left,
			enlargelimits=false,
			legend cell align=left,
			legend pos=north west,
			tick style={black},
			]
			
			\addplot[
			only marks, mark=square*, mark size=2.4pt,
			draw=black, fill=black
			] coordinates {
				(1.00, 3.04) (1.33, 4.03) (1.66, 4.84) (2.00, 3.33) (2.33, 4.43) (2.66, 5.35) (3.00, 3.58) 
			};
			
			\addplot[
			only marks, mark=square*, mark size=2.4pt,
			draw=black, fill=white
			] coordinates {
				(1.00, 2.40) (1.11, 2.56) (1.33, 2.40) (1.44, 2.56) (1.66, 3.04) (1.77, 3.04) (2.00, 2.77) (2.11, 2.94) (2.33, 2.77) (2.44, 2.94) (2.66, 3.33) (2.77, 3.33) (3.00, 3.09) (3.11, 3.26) 
			};
			
			\addplot[
			only marks, mark=*, mark size=2.6pt,
			draw=red, fill=red
			] coordinates {
				(1.00, 1.39) (1.33, 1.95) (1.66, 2.64) (2.00, 1.39) (2.33, 1.95) (2.66, 2.64) (3.00, 1.39) 
			};
			
			\addplot[
			only marks, mark=o, mark size=2.4pt,
			draw=red, fill=white
			] coordinates {
				(1.00, 1.10) (1.11, 1.39) (1.33, 1.61) (1.44, 1.61) (1.66, 2.20) (1.77, 2.20) (2.00, 1.10) (2.11, 1.39) (2.33, 1.61) (2.44, 1.61) (2.66, 2.20) (2.77, 2.20) (3.00, 1.10) (3.11, 1.39) 
			};
			
			\addplot[
			only marks, mark=diamond*, mark size=2.8pt,
			draw=blue, fill=blue
			] coordinates {
				(1.00, 1.10) (1.11, 1.39) (1.33, 1.61) (1.44, 1.95) (1.66, 1.95) (1.77, 2.64) (2.00, 1.10) (2.11, 1.39) (2.33, 1.61) (2.44, 1.95) (2.66, 1.95) (2.77, 2.64) (3.00, 1.10) (3.11, 1.39) 
			};
			
			\addplot[only marks, mark=diamond*, mark size=2.8pt,
			draw=blue, fill=white
			] coordinates {
				(1.00, 0.693) (1.11, 1.39) (1.33, 1.10) (1.44, 1.61) (1.66, 1.10) (1.77, 2.20) (2.00, 1.10) (2.11, 1.39) (2.33, 1.10) (2.44, 1.61) (2.66, 1.10) (2.77, 2.20) (3.00, 0.693) (3.11, 1.39) 
			};
			
			\addplot[
			only marks, mark=asterisk, mark size=2.8pt,
			coral, line width=0.6pt
			] coordinates {
				(2.00, 1.10) (2.11, 1.10) (2.22, 1.39) 
			};
		\end{axis}
\end{tikzpicture}

\end{center}
\caption{Maximal SDP block sizes versus $(n,\,r,\,s)$ for the symmetric quartic.}
\label{fig_daniel_vs_pop}

\end{figure}
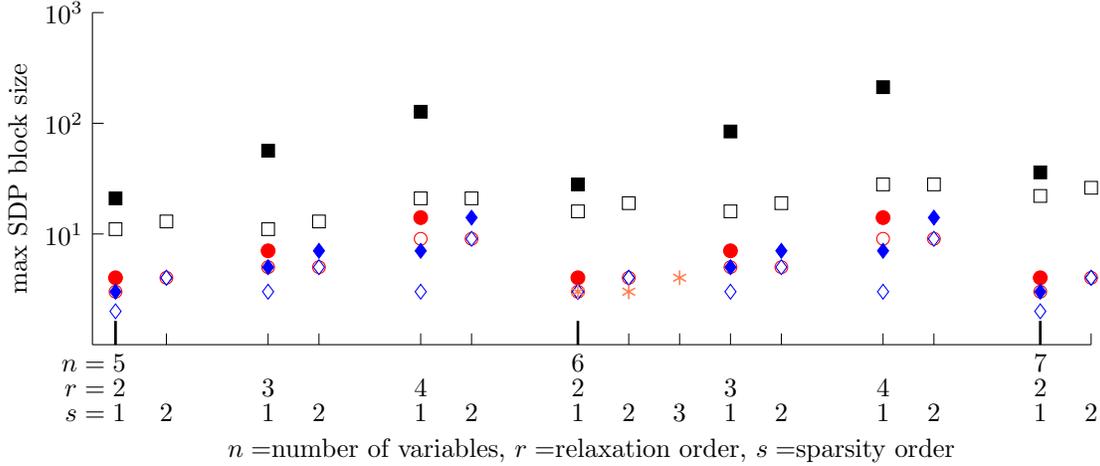

\begin{table}[H]
\begin{center}
\begin{tabular}{cccc}
method 
& sparsity order $s$ &   SDP block sizes                   &      lower bound  \\
\hline
\hline
\textcolor{black}{$\blacksquare$} 
& $1$ &   $28_1$                          &       $-0.5680$   \\
\hline
\textcolor{black}{$\square$} 
& $1$ &   $7_2, 11_5, 16_1$                 &       $-0.7269$   \\
& $2$ &   $7_1, 12_5, 16_1, 17_1, 19_2$     &       $-0.5680$   \\
\hline
\textcolor{red}{\scalebox{1.5}{$\bullet$}} 
& $1$ &   $1_1, 3_1, 4_1$           &       $-0.5680$   \\
\hline
{\greencell}\textcolor{red}{\scalebox{1.5}{$\circ$}} 
{\greencell}& {\greencell}$1$ &   {\greencell}$1_1, 2_2, 3_2$                   &       {\greencell}$-0.5680$   \\
& $2$ &   $1_1, 3_1, 4_1$           &       $-0.5680$   \\
\hline
\textcolor{blue}{\scalebox{1.1}{$\blacklozenge$}} 
& $1$ &   $1_3, 2_1, 3_1$                   &       infeasible  \\
& $2$ &   $1_1, 3_1, 4_1$           &       $-0.5680$   \\
\hline
\textcolor{blue}{\scalebox{1.1}{$\lozenge$}} 
& $1$ &   $1_3, 2_3$                        &       slow progress   \\
& $2$ &   $1_1, 3_1, 4_1$           &       $-0.5680$   \\
\hline
\textcolor{coral}{\scalebox{1.5}{$\ast$}} 
& $1$ &   $1_5, 2_2, 3_1$                   &       $-0.7581$   \\
{\greencell} & {\greencell}$2$ &   {\greencell}$1_2, 2_2, 3_2$                   &       {\greencell}$-0.5680$   \\
& $3$ &   $1_1, 3_1, 4_1$           &       $-0.5680$   \\
\end{tabular}
\caption{
Lower bounds and SDP block sizes for the symmetric quartic, $n=6,\,r=2$. 
}
\label{table_daniel_vs_pop_n6_r2}
\end{center}

\textcolor{black}{$\blacksquare$ 
term sparsity (maximal chordal extension, diagonal squares) \cite{magron21}}
\\
\textcolor{black}{$\square$ 
term sparsity (approximately smallest chordal extension, diagonal squares)}
\\
\textcolor{red}{\scalebox{1.5}{$\bullet$}
$\mathfrak{S}_{n}$-symmetry + term sparsity (maximal chordal extension, diagonal squares)}
\\
\textcolor{red}{\scalebox{1.5}{$\circ$}
$\mathfrak{S}_{n}$-symmetry + term sparsity (approximately smallest chordal extension, diagonal squares)}
\\
\textcolor{blue}{\scalebox{1.1}{$\blacklozenge$}
$\mathfrak{S}_{n}$-symmetry + term sparsity (maximal chordal extension, no diagonal squares)}
\\
\textcolor{blue}{\scalebox{1.1}{$\lozenge$}
$\mathfrak{S}_{n}$-symmetry + term sparsity (approximately smallest chordal extension, no diagonal squares)}
\\
\textcolor{coral}{\scalebox{1.5}{$\ast$} 
term sparsity with $\mathfrak{S}_{n}$-symmetric chordal extension \cite{JohannesThesis}}

\end{table}


\section{Concluding Remarks and Further Investigation Tracks}
\label{section_correlative_symmetry}
We have designed a Lasserre-type moment-sums of squares hierarchy \Cref{eq_symmetric_tssos_hiersarchy} of lower bounds for the polynomial optimization problem \ref{eq_pop} with invariant objective function and constraints. 
The hierarchy is a generalization of the initial TSSOS hierarchy \cite{magron21} in the sense that we construct a tsp graph; see \Cref{defi_tsp_matrix}, but with the novelty that the nodes are elements of a symmetry-adapted basis instead of the standard monomial one. 
In \Cref{thm_sparsity_convergence_1,thm_sparsity_convergence_2}, 
we prove convergence under the maximal chordal extension assumption and Archimedeanity of the quadratic module. 

Our benchmarks in \Cref{section_4_benchmarks} indicate that this new method is superior to both pure term sparsity and pure symmetry exploitation; see \Cref{fig_1D_ring,fig_2D_torus,fig_daniel_vs_pop}. 
The compared parameters are SDP block sizes, numbers of constraints, and computational time. 
The latter strongly depends on the implementation. 
The computation of a symmetry-adapted basis is independent of the objective function and constraints. 
For large groups (such as the symmetric group $\mathfrak{S}_n$ with $n\geq 10$), the package \href{https://github.com/kalmarek/SymbolicWedderburn.jl?tab=readme-ov-file}{\tt SymbolicWedderburn.jl} used to compute symmetry-adapted bases can cause wrong rank estimates in the projection matrices. 

Computation of the block structure of and assembling the SDP also depend on the implementation and can improve the computation time further. 
The key step here is to compute the products of symmetry-adapted basis elements under the Reynolds operator. 

Finally, we conclude with some remarks on variations of the problem and future research directions. 

\subsection{Constraints forming an Orbit}
Throughout 
\Cref{section_2_symmetry_reduction,section_3_symmetry_and_sparsity,section_4_benchmarks}, 
we have assumed $g_k\in\RX^\lineargroup$. 
When we assume instead that the constraints $g_k$ form an orbit, 
then the feasible region $K$ is still $\lineargroup$-stable. 
Indeed, \ref{eq_pop} can be rewritten as 
\begin{equation}\label{eq_pop2}\tag{POP2} 
    f^* 
\coloneqq  \min\{ f(X)\,\vert\,X\in K \} 
    \quad \mbox{with} \quad 
    K
\coloneqq  \{X\in\R^n\,\vert\, 
    \forall \sigma\in \lineargroup: 
    g^\sigma(X)\geq 0\}, 
\end{equation}
where $g\in\{g_1,\ldots,g_\ell\}$ 
of degree $d\coloneqq d_1=\cdots=d_\ell$ 
can be chosen freely among the constraints. 
The assumption $f\in\RX^\lineargroup$ remains unchanged. 


For a $\lineargroup$-invariant linear form $L$, 
the truncated moment matrix $\gmomentmatrix{L}{r}$ 
is still block diagonal with respect to a 
$\lineargroup$-symmetry-adapted basis, 
but the same does not necessarily hold for the localizing matrices
$\gmomentmatrix{g^\sigma*L}{r-d}$. 
Instead, we consider the stabilizer subgroup 
$\lineargroup_k 
\coloneqq \mathrm{Stab}_\lineargroup(g_k) 
\coloneqq \{ \sigma\in\lineargroup\,\vert\,g_k^\sigma=g_k \}$. 
Since the orbits $g_k^\lineargroup$ are equal, 
the $\lineargroup_k$ are conjugate subgroups of $\lineargroup$. 
In particular, the respective isotypic decompositions 
feature the same number of irreducible representations, 
dimensions, and multiplicities. 
Fix a $1\leq k\leq \ell$ and let 
\begin{equation}\label{eq_isotypic_decomposition_stab}
    \RX \hookrightarrow \CX
=   \bigoplus\limits_{i=1}^{\tilde{\irreps}}
    \bigoplus\limits_{j\in \tilde{J}^{(i)}}
    \tilde{W}^{(i)}_j
\end{equation}
be an isotypic decomposition with respect to $\lineargroup_k$
and denote by $\mathcal{R}^{\lineargroup_k}$ the Reynolds operator. 
Analogously to \Cref{prop_symmetry_adapted_basis}, 
we can compute a $\lineargroup_k$-symmetry-adapted 
basis for $\RX_{2(r-d)}$ 
with elements denoted by $\tilde{w}^{(i)}_{1,j}$. 
Note that any $\lineargroup$-invariant linear form $L$ 
is in particular $\tilde{\lineargroup}$-invariant and thus
the localized $\tilde{\lineargroup}$-symmetry-adapted 
moment matrix is now block diagonal 
as in \Cref{eq_moment_matrix_blocks}. 
The entries, indexed by 
$1\leq j,j'\leq \tilde{m}^{(i)}_{r-d}$, are 
\begin{equation}\label{eq_stabilizer_moment_matrix_entries}
    (\tilde{\mathbf{M}}^{(i)}_{r-d}(g_k*L))_{j,j'}
=   L\left(
    \frac{1}{\vert\lineargroup_k\vert}
    \sum\limits_{\sigma\in\lineargroup_k}
    (g_k\,\tilde{w}^{(i)}_{1,j}\,\tilde{w}^{(i)}_{1,j'})^\sigma
    \right)  
\end{equation}
and can be expressed in terms of the $\lineargroup$-moments $\gmomentsequence$ of $L$. 

\subsection{Sparsity after Orbit Space Reduction}

To compute the global minimum of an invariant polynomial $f\in\RX^\lineargroup$, our symmetry reduction method relies on the fact that the polynomial ring has an isotypic decomposition and the moment matrix is block diagonal in a symmetry-adapted basis. 

Alternatively, Hilbert's Finiteness Theorem states that the invariant ring 
$\RX^\lineargroup$ is finitely generated as an $\R$-algebra \cite{Hilbert90}, 
that is, there is a \textbf{set of fundamental invariants} 
$\{p_1,\ldots,p_m\}\subseteq \RX^\lineargroup$ with the following property: 
For any $f\in\RX^\lineargroup$, there exists some $m$-variate polynomial 
$g\in\R[{z}_1,\ldots,{z}_m]\eqqcolon\RZ$ such that $f = g(p_1,\ldots,p_m)$. 
Denote by $p: \R^n \to \R^m,\,X\mapsto (p_1(X),\ldots,p_m(X))$ the \textbf{Hilbert map}. 
The image $p(\R^n)$ is called the \textbf{orbit space of $\lineargroup$}. 

There are various methods to compute fundamental invariants, for example, 
Molien's Formula (\cite[Sec.~2.1]{gatermann2000}, \cite[Alg.~2.2.5]{Sturmfels08}), 
primary and secondary invariants (\cite[Sec.~2.5]{Sturmfels08}, \cite[Sec.~3.5--7]{DerksenKemper15}), or 
Derksen's \cite{Derksen99} or King's \cite{King13} algorithm.

\begin{theorem}[Procesi \& Schwarz \cite{procesischwarz85}]\label{thm_procesi_schwarz}
There exists a matrix polynomial ${\posmat}\in\R[{z}]^{m\times m}$ with the following property: 
For $Z\in\R^m$, we have $Z\in p(\R^n)$ if and only if $\posmat(Z)\succeq 0$. 
\end{theorem}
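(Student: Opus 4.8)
The plan is to assemble $\posmat$ from the \emph{gradient Gram matrix} of the fundamental invariants and to adjoin blocks that enforce the algebraic relations among $p_1,\dots,p_m$, so that the single condition $\posmat(Z)\succeq 0$ detects membership in $p(\R^n)$ for \emph{every} $Z\in\R^m$, and not merely for $Z$ already known to lie on the relation variety.

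First I would fix a $\lineargroup$-invariant inner product $\langle\cdot,\cdot\rangle$ on $\R^n$, obtained by averaging the standard one over the finite group $\lineargroup$. Writing $\nabla$ for the gradient with respect to this inner product, the invariance $p_i(\sigma X)=p_i(X)$ yields $\nabla p_i(\sigma X)=\sigma\,\nabla p_i(X)$, and invariance of $\langle\cdot,\cdot\rangle$ then shows that each function $X\mapsto\langle\nabla p_i(X),\nabla p_j(X)\rangle$ is itself $\lineargroup$-invariant. By Hilbert's Finiteness Theorem it is a polynomial in $p_1,\dots,p_m$, so there is a symmetric $\posmat^{\mathrm{grad}}\in\RZ^{m\times m}$ with $\posmat^{\mathrm{grad}}(p(X))_{i,j}=\langle\nabla p_i(X),\nabla p_j(X)\rangle$. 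Next I would take generators $h_1,\dots,h_s\in\RZ$ of the relation ideal $\ker(\RZ\to\RX^\lineargroup,\ z_i\mapsto p_i)$, finite by the Hilbert basis theorem, and set
\[
    \posmat
\coloneqq  \posmat^{\mathrm{grad}}\ \oplus\ \bigoplus_{\ell=1}^{s}\bigl(-h_\ell^{2}\bigr).
\]
This block-diagonal symmetric matrix polynomial has size $m+s$, and since a scalar block $-h_\ell(Z)^2$ is nonnegative exactly when $h_\ell(Z)=0$, the condition $\posmat(Z)\succeq 0$ is equivalent to the conjunction of $Z\in\mathcal V_\R$ and $\posmat^{\mathrm{grad}}(Z)\succeq 0$, where $\mathcal V_\R$ is the real relation variety cut out by $h_1,\dots,h_s$.

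The forward inclusion is immediate: for $Z=p(X)$ with $X\in\R^n$ the block $\posmat^{\mathrm{grad}}(Z)$ is the Gram matrix of the real vectors $\nabla p_1(X),\dots,\nabla p_m(X)$ with respect to a positive-definite form, hence positive semidefinite, and every $h_\ell(p(X))=0$ by definition of the relation ideal; thus $\posmat(Z)\succeq 0$.

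For the reverse inclusion the block structure reduces the claim to showing that a real point $Z\in\mathcal V_\R$ with $\posmat^{\mathrm{grad}}(Z)\succeq 0$ actually lies in $p(\R^n)$. Here I would use that for a finite group the complexified map $p_\C:\C^n\to\mathcal V(\C)$ realizes the categorical quotient and is surjective, so $Z$ has at least one complex preimage $X$, unique up to $\lineargroup$; for any such $X$ the matrix $\posmat^{\mathrm{grad}}(Z)$ is the bilinear, and a priori indefinite, Gram matrix of the gradients $\nabla p_i(X)$. The decisive task is to promote the \emph{semidefiniteness} of this matrix into the existence of a \emph{real} representative of $Z$, and I expect this lifting to be the main obstacle. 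The route is the Procesi--Schwarz analysis: properness of $p$ (the orbits are finite, hence compact) makes $p(\R^n)$ a closed semialgebraic subset of $\mathcal V_\R$ of full dimension, a slice and stratification description of the quotient identifies the boundary of $p(\R^n)$ inside $\mathcal V_\R$ with the locus where $\posmat^{\mathrm{grad}}$ drops rank, and a negative eigenvalue of $\posmat^{\mathrm{grad}}$ appears precisely upon crossing from $p(\R^n)$ into $\mathcal V_\R\setminus p(\R^n)$. Turning this local rank-and-sign information into the global statement that $\posmat^{\mathrm{grad}}(Z)\succeq 0$ forces a real preimage is the geometric heart of \cite{procesischwarz85}, and completing it finishes the proof.
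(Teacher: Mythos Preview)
The paper does not actually prove this theorem; it is quoted from \cite{procesischwarz85}, and the only content the paper itself supplies is a remark in Appendix~B recording the construction: form the $m\times m$ matrix $\tilde\posmat_{ij}=\transpose{(\nabla p_i)}\,\mathbf S\,\nabla p_j$ with $\mathbf S$ the group-averaged inner-product matrix, observe via the equivariance lemma that its entries are $\lineargroup$-invariant, and rewrite them as polynomials in $z_1,\dots,z_m$. Your $\posmat^{\mathrm{grad}}$ is precisely this object, so at the level of the construction you agree with the paper. You then go well beyond the paper by sketching the Procesi--Schwarz argument for the reverse implication, and you correctly isolate the stratification and rank-drop analysis on the quotient as the nontrivial geometric core; the paper says nothing at all about this and simply defers to the reference.

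The one discrepancy worth flagging is the size of your matrix. Adjoining the diagonal blocks $-h_\ell^{2}$ produces an $(m{+}s)\times(m{+}s)$ matrix, whereas the statement (and the paper's remark) specifies an $m\times m$ one. In the classical formulation only the bare gradient Gram matrix is used, with the equivalence understood for $Z$ lying on the real relation variety $\mathcal V_\R$; since $p(\R^n)\subseteq\mathcal V_\R$ automatically, nothing is lost. Your extra blocks are a perfectly sensible device to make the literal quantifier ``for all $Z\in\R^m$'' honest---a lift of $\posmat^{\mathrm{grad}}$ to $\RZ$ is determined only modulo the relation ideal and could in principle be positive semidefinite at some $Z\notin\mathcal V_\R$---but strictly speaking it departs from the stated shape. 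If you want to match the statement exactly, drop the $-h_\ell^2$ blocks and phrase the equivalence on $\mathcal V_\R$; if you want the equivalence on all of $\R^m$, keep the blocks and note that the matrix size must be enlarged.
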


In particular, the unconstrained polynomial optimization problem 
\ref{eq_pop} with $K=\R^n$ can be rewritten as 
\begin{equation}\label{eq_pop3}\tag{POP3} 
    f^*
=   \min\{f(X)\,\vert\,X\in\R^n\}
=   \min\{g(Z)\,\vert\,Z\in\R^m,\, \mathbf{P}(Z)\succeq 0\} . 
\end{equation}
A promising investigation track would be to leverage the recent work \cite{WangMillerGuo25} that addresses sparsity exploitation for polynomial optimization problems with polynomial matrix constraints such as \ref{eq_pop3}. 

\subsection{Equivariant Dynamical Systems}

Finally, we explore the idea of exploiting both symmetry and term sparsity for computing a maximal positive invariant (MPI) set of a dynamical system. 
Let $\lineargroup\subseteq\mathrm{GL}_n(\R)$ be a finite group 
and $f=\transpose{({f}_1,\ldots,{f}_n)}\in\RX^{n}$ 
be a \textbf{$\lineargroup$-equivariant} polynomial vector field, that is, 
\begin{equation}
    \sigma^{-1} \cdot f 
=   f \circ \sigma^{-1} 
=   ({f}_1^\sigma,\ldots,{f}_n^\sigma)^T 
\eqqcolon  f^\sigma 
    \mbox{ whenever } \sigma \in \lineargroup . 
\end{equation}
Here, the left-hand side is given by matrix-vector multiplication and 
the right-hand side by the induced action of $\lineargroup$ on the polynomial entries. 
For $X : \R\to\R^n,\,t\mapsto \transpose{(X_{1}(t),\ldots,X_{n}(t))}$ differentiable 
and $g_1,\ldots,g_\ell\in\RX^\lineargroup$, 
consider the \textbf{equivariant dynamical system}
\begin{equation}
    \dot{X}(t)
=   f(X(t)),
    \quad \mbox{s.t.} \quad
    \forall\,1\leq k\leq \ell:\,
    g_k(X(t)) \geq 0 . 
\end{equation}
The symmetry-adapted moment-SOS approximation for the 
\textbf{maximum positively invariant} (MPI) set 
of order $r$ with discount factor $\beta > 0$ is 
\begin{equation}
    \begin{array}[t]{rl} 
    \sup        &   \gmoment{1} \\ 
    \mbox{s.t.} &   \gmomentsequence, \tilde{\gmomentsequence}, \hat{\gmomentsequence} 
                    \in \R^{\gmultiplicity{1}{2r}} ,\, \forall \, j : \\ 
                &   \gmoment{j} = 
                    z_j - \tilde{y}_j \\ 
                &   \gmoment{j} = 
                    \beta \, \hat{y}_j - \sum_{j'} f_{j,j'} \, \hat{y}_{j'} , \\
                &   \momentblock{g_k*       \gmomentsequence }{i}{r-d_k} \succeq 0 , \\ 
                &   \momentblock{g_k*\tilde{\gmomentsequence}}{i}{r-d_k} \succeq 0 , \\ 
                &   \momentblock{g_k*  \hat{\gmomentsequence}}{i}{r-d_k} \succeq 0 
    \end{array}
    \, \mbox{and} \,\, 
    \begin{array}[t]{rl}
    \inf  \,\,      &   \sum_j h_j\,z_j \\
    \mbox{s.t.} &   h\in\RX^\lineargroup_{2r} ,\, 
                    v\in\RX^\lineargroup_{2r+1-d_f} , \\
                &   q^{(i)}_k , \tilde{q}^{(i)}_k , \hat{q}^{(i)}_k 
                    \in \soscone{i}{r-d_k}, \\ 
                &   \begin{array}[t]{r}
                        h
                    =   \sum_j h_j\,w^{(1)}_j 
                    =   \sum_{k,i} \reynolds(q^{(i)}_k\,g_k), \\ 
                        h - v - 1 
                    =   \sum_{k,i} \reynolds(\tilde{q}^{(i)}_k\,g_k), \\ 
                        \beta v - \transpose{\nabla v} \cdot \mathbf{S} \cdot f 
                    =   \sum_{k,i} \reynolds(\hat{q}^{(i)}_k\,g_k), 
                    \end{array}
    \end{array}
\end{equation}
where $f_{j,j'} \in \R$ are certain coefficients of a term defined in \Cref{lemma_equivariance} and $z_j$ are the $\lineargroup$-moments of the Lebesgue measure; 
see also \cite{WangSchlosserKordaMagron23} for the non-symmetric version. 
Analogously to \Cref{defi_tsp_matrix}, 
one can then construct a tsp graph for the MPI approximation and 
exploit term sparsity in the symmetry-adapted basis. 
Future research efforts include a practical implementation of these latter hierarchies on concrete dynamical system applications.  

\section*{Acknowledgments}

This work was supported by 
the European Union's HORIZON–MSCA-2023-DN-JD programme under the Horizon Europe (HORIZON) Marie Sk{\l}odowska-Curie Actions under the grant agreement 101120296 (TENORS), 
the AI Interdisciplinary Institute ANITI funding through the French ``France 2030'' program under the Grant agreement n°ANR-23-IACL-0002, 
the project COMPUTE funded within the QuantERA II programme that has received funding from the EU's H2020 research and innovation programme under the GA No.~101017733 {\normalsize\euflag}, 
the National Key R\&D Program of China under grant No.~2023YFA1009401, 
the Natural Science Foundation of China under grant No.~12571333, 
and the International Partnership Program of Chinese Academy of Sciences under grant No.~167GJHZ2023001FN. 
IK was also supported by the Slovenian Research Agency program P1-0222 and grants 
J1-50002, N1-0217, J1-60011, J1-50001, J1-3004 and J1-60025, 
and partially supported by the Fondation de l'Ecole polytechnique as part of the Gaspard Monge Visiting Professor Program. 
IK thanks Ecole Polytechnique and Inria for hospitality during the preparation of this manuscript.

\bibliographystyle{alpha}

\bibliography{refer}

\appendix

\section{Linear Representation Theory}
\label{appendix_rep_theo}
Let $\lineargroup$ be a finite group. 
All $\lineargroup$-modules and representations are defined over the complex numbers $\C$. 

\begin{theorem}[Maschke's Theorem]\cite[Ch.1,~Thm.2]{serre77}
Every $\lineargroup$-module can be decomposed into 
a direct sum of irreducible $\lineargroup$-submodules. 
\end{theorem}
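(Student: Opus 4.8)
The plan is to deduce Maschke's Theorem from the stronger property of \emph{complete reducibility}: every $\lineargroup$-submodule $U$ of a $\lineargroup$-module $W$ admits a $\lineargroup$-stable complement $U'$ with $W = U \oplus U'$. Granting this, the theorem follows by induction on $\dim_\C W$. If $W$ is irreducible (in particular if $\dim_\C W \leq 1$), there is nothing to prove; otherwise $W$ contains a nonzero proper submodule $U$, and choosing a complement $U'$ as above splits $W$ into two submodules of strictly smaller dimension, each of which decomposes into irreducibles by the inductive hypothesis.

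The core of the argument is the construction of the complement, which I would carry out by averaging over the group. Given $U \subseteq W$, first pick any $\C$-linear projection $\pi_0 : W \to U$ onto $U$, forgetting the group action; such a $\pi_0$ exists because $U$ is a vector-space direct summand. Then I would symmetrize it to
\[
    \pi
\coloneqq  \frac{1}{\vert\lineargroup\vert}
    \sum_{\sigma\in\lineargroup}
    \rho_W(\sigma)\circ\pi_0\circ\rho_W(\sigma)^{-1}
\]
and verify three properties. First, $\pi$ commutes with every $\rho_W(\tau)$, hence lies in $\mathrm{Hom}_\lineargroup(W,W)$; this holds because conjugating the sum by $\rho_W(\tau)$ merely reindexes the summation over $\lineargroup$. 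Second, $\mathrm{im}(\pi)\subseteq U$, since $U$ is $\lineargroup$-stable and $\pi_0$ already maps into $U$. Third, $\pi$ restricts to the identity on $U$, because for $u\in U$ each summand yields $\rho_W(\sigma)\pi_0(\rho_W(\sigma)^{-1}u)=\rho_W(\sigma)\rho_W(\sigma)^{-1}u=u$, and averaging returns $u$. Thus $\pi$ is a $\lineargroup$-equivariant idempotent projecting $W$ onto $U$.

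Finally I would set $U'\coloneqq\ker(\pi)$. Since $\pi$ is a module homomorphism, $U'$ is a $\lineargroup$-submodule, and the idempotency together with $\pi\vert_U=\mathrm{id}$ gives the internal decomposition $W=U\oplus U'$ as $\lineargroup$-modules. This establishes complete reducibility and hence the theorem.

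I do not expect a genuine obstacle here, as this is the classical averaging proof; the one point deserving care is the factor $1/\vert\lineargroup\vert$ in the definition of $\pi$. This division is precisely where finiteness of $\lineargroup$ and the hypothesis that we work over $\C$ enter: one needs $\vert\lineargroup\vert$ to be invertible in the base field for the averaged projection to be well-defined, which holds in characteristic $0$ but fails, for instance, in characteristic dividing $\vert\lineargroup\vert$.
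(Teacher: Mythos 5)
Your proof is correct and coincides with the classical argument in the cited reference \cite[Ch.~1, Thms.~1--2]{serre77}: the paper itself does not reprove Maschke's theorem but merely quotes Serre, whose proof is exactly your averaging construction of an equivariant projection followed by induction. The only point worth flagging is that your induction on $\dim_\C W$ handles the finite-dimensional case, which is what Serre's statement asserts and what the paper actually needs (its application to the infinite-dimensional module $\CX$ proceeds degree by degree, each homogeneous component being finite-dimensional and $\lineargroup$-stable).
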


\begin{proposition}[Schur's Lemma]\cite[Ch.2,~Prop.4]{serre77}
Let $W,\tilde{W}$ be two irreducible $\lineargroup$-modules. 
Then any $\phi\in\mathrm{Hom}_\lineargroup(W,\tilde{W})$ 
is either the zero map or an isomorphism. 
If $W,\tilde{W}$ are finite dimensional, 
then $\mathrm{Hom}_\lineargroup(W,\tilde{W})$ 
has vector space dimension $1$. 
\end{proposition}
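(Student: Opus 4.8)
The plan is to prove the two assertions separately: the dichotomy (zero map or isomorphism) follows from a submodule argument using irreducibility, while the dimension count rests on the fact that $\C$ is algebraically closed. Throughout I write $\rho_W,\rho_{\tilde W}$ for the representations and recall from \Cref{defi_rep_theo} that $\phi\in\mathrm{Hom}_\lineargroup(W,\tilde W)$ means $\phi\circ\rho_W(\sigma)=\rho_{\tilde W}(\sigma)\circ\phi$ for every $\sigma\in\lineargroup$.

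First I would establish the dichotomy. Let $\phi\in\mathrm{Hom}_\lineargroup(W,\tilde W)$ be nonzero. The equivariance relation shows that $\ker\phi$ is stable under $\rho_W(\sigma)$ for all $\sigma$ (if $\phi(w)=0$ then $\phi(\rho_W(\sigma)w)=\rho_{\tilde W}(\sigma)\phi(w)=0$), hence is a $\lineargroup$-submodule of $W$; likewise $\mathrm{im}\,\phi$ is a $\lineargroup$-submodule of $\tilde W$. Since $W$ is irreducible, $\ker\phi\in\{0,W\}$, and $\phi\neq 0$ forces $\ker\phi=0$, so $\phi$ is injective. Since $\tilde W$ is irreducible, $\mathrm{im}\,\phi\in\{0,\tilde W\}$, and again $\phi\neq 0$ forces $\mathrm{im}\,\phi=\tilde W$, so $\phi$ is surjective. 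Thus a nonzero $\phi$ is a bijective $\lineargroup$-module homomorphism, that is, an isomorphism.

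Next I would compute the dimension in the finite-dimensional case. If $W$ and $\tilde W$ are not isomorphic, the first part leaves only the zero map, so $\mathrm{Hom}_\lineargroup(W,\tilde W)=0$. If instead $W\cong\tilde W$, I fix an isomorphism $\psi$; composing with $\psi^{-1}$ identifies $\mathrm{Hom}_\lineargroup(W,\tilde W)$ with the endomorphism algebra $\mathrm{End}_\lineargroup(W)$, so it suffices to show $\dim_\C\mathrm{End}_\lineargroup(W)=1$. Given $\phi\in\mathrm{End}_\lineargroup(W)$, the operator $\phi$ acts on the nonzero finite-dimensional $\C$-vector space $W$ and therefore admits an eigenvalue $\lambda\in\C$, precisely because $\C$ is algebraically closed. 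Then $\phi-\lambda\,\mathrm{id}_W$ again lies in $\mathrm{End}_\lineargroup(W)$ and has nonzero kernel (the $\lambda$-eigenspace), so by the dichotomy it cannot be an isomorphism and must vanish; hence $\phi=\lambda\,\mathrm{id}_W$, giving $\mathrm{End}_\lineargroup(W)=\C\,\mathrm{id}_W$, of dimension one.

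The only subtlety, and the step carrying the real content, is the appeal to algebraic closedness of $\C$ to produce an eigenvalue: over a non-closed field the endomorphism algebra of an irreducible module may be a larger division ring (for instance $\R$ acting irreducibly with endomorphism algebra $\C$), which is exactly why the paper fixes $\mathbb{K}=\C$ before invoking Schur's Lemma. I would also note that the dimension equals one precisely when $W\cong\tilde W$ and is zero otherwise, so the dimension statement is to be read under the isomorphism hypothesis.
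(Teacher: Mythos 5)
Your proof is correct. The paper itself offers no proof of this statement -- it is quoted as background from \cite[Ch.2,~Prop.4]{serre77} -- and your argument (kernel and image are submodules, so irreducibility forces the dichotomy; then the eigenvalue argument over the algebraically closed field $\C$ pins down the endomorphism algebra) is precisely the classical proof given in that reference, so there is nothing substantive to compare beyond noting the match. Your closing observation is also a worthwhile clarification rather than a digression: as literally stated in the paper, the dimension-one claim fails when $W\not\cong\tilde{W}$ (there $\mathrm{Hom}_\lineargroup(W,\tilde{W})=0$), so the second assertion must indeed be read under the hypothesis $W\cong\tilde{W}$, exactly as in Serre's formulation, and your proof handles both cases cleanly.
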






\begin{proof}[Proof of \Cref{thm_symm_moment_sos_hierarchy_SDP}]
For $f^r_{\mathrm{mom}}$, this is clear. 
If $\sosmatrix{i}{k}$ is feasible for the SDP, 
it is in particular orthogonally diagonalizable with nonnegative eigenvalues. 
Denote by $\mathbf{w}^{(i)}_{1}$ the vector of basis elements $w^{(i)}_{1,j}$ with 
$1\leq j\leq \gmultiplicity{i}{r-d_k}$. 
Then 
$q^{(i)}_k \coloneqq \transpose{(\mathbf{w}^{(i)}_1)} \cdot \sosmatrix{i}{k} \cdot \mathbf{w}^{(i)}_1$ 
is a sum of squares of elements in $\rspan{\mathcal{S}^{(i)}_{r-d_k}}$ with 
\begin{align*}
    \sum_{k,i} \reynolds (q^{(i)}_k\,g_k)
&=  \sum_{k,i} \reynolds (\transpose{(\mathbf{w}^{(i)}_1)} \cdot 
    \sosmatrix{i}{k} \cdot \mathbf{w}^{(i)}_1\,g_k) \\
&=  \sum_{k,i} \trace(\underbrace{\reynolds
    (\mathbf{w}^{(i)}_1 \cdot \transpose{(\mathbf{w}^{(i)}_1)}\,g_k)
    }_{=\sum_j \sdpmatrix{i}{r,k,j}\,w^{(1)}_{1,j}} \cdot \sosmatrix{i}{k})\\
&=  \sum_j \sum_{k,i} \trace(\sdpmatrix{i}{r,k,j} \cdot \sosmatrix{i}{k})\,w^{(1)}_{1,j}\\
&=  f_1-(\underbrace{f_1-\sum_{k,i} \trace(\sdpmatrix{i}{r,k,1} \cdot 
    \sosmatrix{i}{k})}_{\eqqcolon\lambda}) + 
    \sum_{j\geq 2} \underbrace{\sum_{k,i} \trace(\sdpmatrix{i}{r,k,j} \cdot 
    \sosmatrix{i}{k})}_{=f_j}\,w^{(1)}_{1,j}\\
&=  f-\lambda.
\end{align*}
Hence, $\lambda$ and the $q^{(i)}_k$ are feasible for $f^r_{\mathrm{sos}}$. 
Conversely, let $\lambda$ be feasible for $f^r_{\mathrm{sos}}$ with sums of squares 
\[
    q^{(i)}_k
=   \sum_{t} (q^{(i)}_{k,t})^2
    \quad \mbox{such that}  \quad
    f-\lambda 
=   \sum_{k,i}
    \reynolds (q^{(i)}_k\,g_k) .
\]
For $q\in\rspan{\mathcal{S}^{(i)}_{r-d_k}}$, 
denote by $\mathbf{vec}(q)$ the coefficient vector. 
Then
\begin{align*}
    \sum_j f_j\,w^{(1)}_{1,j} - \lambda
=   f-\lambda
&=  \sum_{k,i}
    \reynolds
    \left(\sum_t \transpose{(\mathbf{vec}(q^{(i)}_{k,t})} \cdot 
    \mathbf{w}^{(i)}_1 )^2\,g_k\right) \\
&=  \sum_{k,i} \sum_t
    \transpose{\mathbf{vec}(q^{(i)}_{k,t})}\cdot
    \underbrace{
    \reynolds (
    \mathbf{w}^{(i)}_{1}\cdot\transpose{(
    \mathbf{w}^{(i)}_{1}
    )}\,g_k
    )}_{=\sum_j A^{(i)}_{r,k,j}\,w^{(1)}_{1,j}}
    \cdot
    \mathbf{vec}(q^{(i)}_{k,t})\\
&=  \sum_{k,i} \trace 
    \left( \resizebox{0pt}{11pt}{$\sum\limits_{k,i}$} \right.
    \sum_j (\sdpmatrix{i}{r,k,j} \, w^{(1)}_{1,j}) \cdot
    \underbrace{\sum_t
    \mathbf{vec}(q^{(i)}_{k,t})\cdot\transpose{(
    \mathbf{vec}(q^{(i)}_{k,t})
    )}}_{\eqqcolon\sosmatrix{i}{k}}
    \left. \resizebox{0pt}{11pt}{$\sum\limits_{k,i}$} \right) .
\end{align*}
Comparing coefficients yields 
$f_1-\lambda=\sum_{k,i} \trace(\sdpmatrix{i}{r,k,1}\cdot \sosmatrix{i}{k})$ and 
$f_j=\sum_{k,i}\trace(\sdpmatrix{i}{r,k,j}\cdot \sosmatrix{i}{k})$ for 
$2\leq j \leq \gmultiplicity{1}{2r}$. 
This completes the proof. 
\end{proof}

\section{Equivariants}
\label{appendix_equivariants}


\begin{lemma}\label{lemma_equivariance}
If $p\in\RX$ is $\lineargroup$-invariant, 
then the formal gradient 
$\transpose{\nabla p=(\frac{\partial p}{\partial X_1},\ldots,\frac{\partial p}{\partial X_n})}$ 
is $\lineargroup$-equivariant. 
Furthermore, if $\mathbf{f},\mathbf{g}\in\RX^n$ are 
$\lineargroup$-equivariant and $\mathbf{S}\in\R^{n\times n}$ 
satisfies $\mathbf{S}=\transpose{\sigma}\cdot \mathbf{S} \cdot \sigma$ 
for all $\sigma\in\lineargroup$, then 
$\transpose{\mathbf{f}} \cdot \mathbf{S} \cdot \mathbf{g} \in \RX$ 
is $\lineargroup$-invariant. 
\end{lemma}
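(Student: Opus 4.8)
The statement splits into two independent claims, and the plan is to verify each by unwinding the defining functional equation and then applying either the chain rule or a one-line substitution. Throughout I use the conventions from the text: $p$ is $\lineargroup$-invariant iff $p(\sigma^{-1}\cdot X)=p(X)$ for all $\sigma\in\lineargroup$ and all $X\in\R^n$, and a field $\mathbf{f}\in\RX^n$ is $\lineargroup$-equivariant iff $\mathbf{f}(\sigma^{-1}\cdot X)=\sigma^{-1}\cdot\mathbf{f}(X)$ for all $\sigma$.

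For the gradient, I would start from the invariance identity $p(\sigma^{-1}\cdot X)=p(X)$ and differentiate both sides in $X$. The inner map $X\mapsto\sigma^{-1}\cdot X$ is linear with constant Jacobian $\sigma^{-1}$, so the chain rule turns the left-hand side into $\transpose{(\sigma^{-1})}\,(\nabla p)(\sigma^{-1}\cdot X)$ while the right-hand side is $(\nabla p)(X)$. Since $\transpose{\sigma}\transpose{(\sigma^{-1})}$ is the identity, rearranging yields the transformation law $(\nabla p)(\sigma^{-1}\cdot X)=\transpose{\sigma}\,(\nabla p)(X)$. To read this as equivariance in the stated sense I then invoke that $\lineargroup$ acts orthogonally, i.e. $\transpose{\sigma}=\sigma^{-1}$ --- which holds for the (signed) permutation actions of the dihedral, cyclic, and symmetric groups treated here --- so that $(\nabla p)(\sigma^{-1}\cdot X)=\sigma^{-1}\cdot(\nabla p)(X)$, exactly the equivariance condition.

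For the second claim I would set $h\coloneqq\transpose{\mathbf{f}}\cdot\mathbf{S}\cdot\mathbf{g}$ and compute $h(\sigma^{-1}\cdot X)$ directly. Substituting the equivariance of both arguments gives $h(\sigma^{-1}\cdot X)=\transpose{(\sigma^{-1}\mathbf{f}(X))}\,\mathbf{S}\,(\sigma^{-1}\mathbf{g}(X))=\transpose{\mathbf{f}(X)}\,\bigl(\transpose{(\sigma^{-1})}\mathbf{S}\,\sigma^{-1}\bigr)\,\mathbf{g}(X)$. The hypothesis $\mathbf{S}=\transpose{\sigma}\mathbf{S}\sigma$ is equivalent, on multiplying by $\transpose{(\sigma^{-1})}$ on the left and $\sigma^{-1}$ on the right, to $\transpose{(\sigma^{-1})}\mathbf{S}\sigma^{-1}=\mathbf{S}$, so the middle factor collapses and $h(\sigma^{-1}\cdot X)=h(X)$, proving $\lineargroup$-invariance. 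Notably this half uses no orthogonality: the condition imposed on $\mathbf{S}$ is precisely what is needed to absorb the two factors of $\sigma^{-1}$ produced by equivariance.

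The one place demanding care --- the main obstacle --- is the gradient's transformation law in the first part. It is tempting but wrong to expect $\nabla p$ to transform like a vector; differentiation produces a covector governed by the contragredient $\sigma\mapsto\transpose{\sigma}$, and it is only the orthogonality $\transpose{\sigma}=\sigma^{-1}$ (equivalently, the use of the invariant metric $\mathbf{S}$ to raise the index) that reconciles this with the equivariant-field convention. The remaining computations are routine applications of the chain rule and of the defining relations for $\mathbf{S}$, $\mathbf{f}$, and $\mathbf{g}$.
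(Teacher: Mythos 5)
Your proof is correct and, in outline, follows the same route as the paper's: the chain rule for the gradient claim, and direct substitution combined with the equivalence of $\mathbf{S}=\transpose{\sigma}\cdot\mathbf{S}\cdot\sigma$ and $\transpose{(\sigma^{-1})}\cdot\mathbf{S}\cdot\sigma^{-1}=\mathbf{S}$ for the bilinear claim; your second half is essentially verbatim the paper's argument.

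The difference worth recording concerns the first claim, and there your version is the careful one. The paper's proof asserts the identity $(\nabla(p\circ\sigma))(\var{})=\sigma\cdot(\nabla p)(\sigma(\var{}))$, but the chain rule produces $\transpose{\sigma}$, not $\sigma$, exactly as you derive (indeed, even taken at face value the paper's identity would yield $(\nabla p)^\sigma=\sigma\cdot\nabla p$, not the claimed $\sigma^{-1}\cdot\nabla p$). Consequently the gradient of an invariant polynomial is equivariant for the contragredient action $\sigma\mapsto\transpose{(\sigma^{-1})}$, and it satisfies the equivariance required by the statement, $(\nabla p)^\sigma=\sigma^{-1}\cdot\nabla p$, only when $\transpose{\sigma}=\sigma^{-1}$ for all $\sigma\in\lineargroup$. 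Your additional hypothesis that $\lineargroup$ acts orthogonally is therefore not a shortcut but a necessary correction: the first claim is false for general finite $\lineargroup\subseteq\mathrm{GL}_n(\R)$. For instance, take $\sigma=A^{-1}RA$ with $R$ the rotation by $\pi/2$ and $A$ a shear; then $\sigma$ generates a cyclic group of order four, $p(X)=\Vert AX\Vert^2$ is invariant, and $\nabla p$ transforms by $\transpose{\sigma}\neq\sigma^{-1}$, so it is not equivariant in the stated sense. Since every finite subgroup of $\mathrm{GL}_n(\R)$ is conjugate to an orthogonal one, and all groups in the paper's benchmarks act by permutation matrices, the assumption is harmless in context, but it (or a restatement of the conclusion via the contragredient action) ought to be added to the lemma. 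Your closing observation that the second claim needs no orthogonality — the hypothesis on $\mathbf{S}$ being exactly what absorbs the two factors of $\sigma^{-1}$ — is likewise correct.
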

\begin{proof}
Let $\sigma\in\lineargroup$. 
By $\lineargroup$-invariance and the chain rule, we have
\[
    (\nabla p)(\var{})
=   (\nabla(p\circ\sigma))(\var{})
=   \sigma\cdot(\nabla p)(\sigma(\var{}))
\]
and thus $\sigma^{-1}\cdot\nabla p=(\nabla p)^\sigma$. 
Furthermore, by linearity and $\lineargroup$-equivariance, we have
\[
    (\transpose{\mathbf{f}} \cdot \mathbf{S} \cdot \mathbf{g})^\sigma
=   \transpose{\mathbf{f}} \cdot \transpose{(\sigma^{-1})} 
    \cdot \mathbf{S} \cdot \sigma^{-1} \cdot \mathbf{g}
=   \transpose{\mathbf{f}} \cdot \mathbf{S} \cdot \mathbf{g}. 
\]
\end{proof}

\begin{remark}
One way to construct a matrix $\mathbf{S}$ that satisfies 
the hypothesis of \Cref{lemma_equivariance} is to set
$\mathbf{S}\coloneqq\frac{1}{\vert\lineargroup\vert}
\sum_{\sigma\in\lineargroup}\transpose{\sigma}\cdot\sigma$. 
If $\lineargroup$ is an orthogonal group, 
then one may choose $\mathbf{S}$ as the identity matrix. 
\end{remark}

\begin{remark}
The matrix polynomial $\mathbf{P}\in\RZ^{m\times m}$ from 
\Cref{thm_procesi_schwarz} can be constructed as follows: 
Let $\lineargroup$ be a finite group with fundamental invariants $\{p_1,\ldots,p_m\}$, 
$\mathbf{S}\in\R^{n\times n}$ be as in \Cref{lemma_equivariance}, 
and $\tilde{\posmat}\in\RX^{m\times m}$ 
be the matrix with entries 
$\tilde{\posmat}_{ij} = \transpose{(\nabla p_i)}\cdot \mathbf{S} \cdot \nabla p_j$. 
Since $\tilde{\posmat}_{ij}\in\RX^\lineargroup$ by \Cref{lemma_equivariance}, 
there exists a matrix
${\posmat}\in\R[{z}]^{m\times m}$ such that
$\posmat(p_1(\var{}),\ldots,p_m(\var{})) = \tilde{\posmat}(\var{})$. 
\end{remark}

\Addresses

\end{document}